\newcommand{\supp}{\text {\rm supp}}
\def\i{^{-1}}
\def\ge{\geqslant}
\def\le{\leqslant}
\def\<{\langle}
\def\>{\rangle}
\def\ba{\textbf{a}}
\def\Adm{{\rm{Adm}}}
\def\ad{{\rm{ad}}}
\def\a{\alpha}
\def\b{\beta}
\def\g{\gamma}
\def\G{\Gamma}
\def\d{\delta}
\def\e{\epsilon}
\def\o{\omega}
\def\s{\sigma}
\def\th{\theta}
\def\l{\lambda}
\def\z{\zeta}
\def\tPhi{\tilde \Phi}
\def\ZZ{\mathbb Z}
\def\AA{\mathbb A}
\def\NN{\mathbb N}
\def\QQ{\mathbb Q}
\def\JJ{\mathbb J}
\def\FF{\mathbb F}
\def\RR{\mathbb R}
\def\PP{\mathbb P}
\def\SS{\mathbb S}
\def\kk{\bold{k}}
\def\ca{\mathcal A}
\def\cc{\mathcal C}
\def\co{\mathcal O}
\def\car{\mathcal R}
\def\car{\mathcal R}
\def\tW{\tilde W}
\def\tw{\tilde w}
\def\ba{\bold{a}}
\def\ad{{\rm ad}}
\def\dist{\text{dist}}
\theoremstyle{plain}
\newtheorem{thm}{Theorem}[section]
\newtheorem{conj}{Conjecture}[section]
\newtheorem*{thm*}{Theorem}
 \newtheorem{prop}[thm]{Proposition}
 \newtheorem{lem}[thm]{Lemma}
 \newtheorem{cor}[thm]{Corollary}
\theoremstyle{definition}
\theoremstyle{remark}
\newtheorem*{claim*}{Claim}
\begin{document}

\title[]{Connected components of closed affine Deligne-Lusztig varieties}
\keywords{Affine Deligne-Lusztig varieties; connected components.}
\subjclass[2010]{20G25, 14G35}

\author{Ling Chen}
\address{School of Mathematical Sciences, University of Chinese Academy of Sciences, Beijing 100049, China}
\email{chenling2013@ucas.ac.cn}

\author{Sian Nie}
\address{Institute of Mathematics, Academy of Mathematics and Systems Science, Chinese Academy of Sciences, 100190, Beijing, China}
\email{niesian@amss.ac.cn}

\thanks{L.C. is supported in part by NSFC grant 11401559 and UCAS grant Y55202HY00. S.N. is supported in part by QYZDB-SSW-SYS007 and NSFC grant (No. 11501547 and No. 11621061).}

\begin{abstract}
For split reductive algebraic groups, we determine the connected components of closed affine Deligne-Lusztig varieties of arbitrary parahoric level.
\end{abstract}

\maketitle

\section*{Introduction} \label{first}
\subsection{} The motivation of this paper comes from the study of reductions of PEL type Shimura varieties at a prime number $p$. A Shimura varity of PEL type can be viewed as a moduli space of abelian varieties with additional structures such as polarization, endomorphisms, level structures and so on. To each such abelian variety we can attach its $p$-divisible group, which inherits corresponding additional structures. Thus the special fiber of the moduli space at $p$ decomposes into finitely many locally closed subspaces called Newton strata, which are parameterized by the isogeny classes of these $p$-divisible groups (with additional structures).

By the uniformization theorem of Rapoport and Zink \cite{RZ}, one can describe the Newton strata in terms of so-called Rapoport-Zink spaces. Their geometric properties play an important role in the study of Shimura varieties. Via Dieudonn\'{e} theory, the set of geometric points of a Rapoport-Zink space can be identified with certain union of affine Deligne-Lusztig varieties. The main purpose of this paper is to study the connected components of such unions of affine Deligne-Lusztig varieties.

Let $\FF_q$ be a finite field of $q$ elements and let $\kk$ be its algebraic closure. Let $F$ be a finite extension of $\QQ_p$ with residue class field $\FF_q$ or $F=\FF_q((t))$ be the field of Laurent series over $\FF_q$. Denote by $L$ the completion of a maximal unramified extension of $F$. We fix a uniformizer $t$ of $F$.

Let $G$ be an unramified reductive group over $F$. Let $\s$ be the Frobenius automorphism of $L / F$. We also denote by $\s$ the induced automorphism on $G(L)$.

Fix a maximal torus $T$ and a Borel subgroup $B \supseteq T$ over $F$. The Iwahori-Weyl group is defined by $\tW=N(L) / T(L)_0$, where $N$ is the normalizer of $T$ in $G$ and $T(L)_0$ is the unique maximal compact subgroup of $T(L)$. Fix a $\s$-invariant Iwahori subgroup $I \subseteq G(L)$ containing $T(L)_0$. We have the Iwahori-Bruhat decomposition $G(L)=\sqcup_{x \in \tW} I x I$. Let $P \supseteq I$ be a $\s$-stable parahoric subgroup. For $b \in G(L)$ and $x \in \tW$, the attached affine Deligne-Lusztig variety is define by $$X_x(b)_P=\{g P \in G(L); g\i b \s(g) \in P x P\} / P.$$ If $F$ is of equal characteristic, this is the set of $\kk$-points of a locally closed subscheme in the partial affine flag variety $G(L) / P$. If $F$ is of mixed characteristic, it is the set of $\kk$-points of a locally closed perfect subscheme of the $p$-adic partial flag variety in the sense of \cite{BS} and \cite{Zhu}.

In the theory of local Shimura varieties (or Rapoport-Zink spaces), it is natural to consider the following union of affine Deligne-Lusztig varieties: $$X(\l, b)_P = \cup_{x \in \Adm(\l)} X_x(b)_P,$$ where $\l$ is a geometric cocharacter of $T$ and $\Adm(\l) \subseteq \tW$ denotes the $\l$-admissible set (see \S\ref{admissible} for notation). When the data $(G, \l, P)$ arises from a (PEL type) Shimura variety (in particular, $F$ is of mixed characteristic), $X(\mu, b)_P$ is the set of $\kk$-valued points of the corresponding Rapoport-Zink space (see \cite{R}).

We are interested in the set $\pi_0(X(\l, b)_P)$ of connected components of $X(\mu, b)_P$. When $P$ is hyperspecial, $\pi_0(X(\mu, b)_P)$ is determined by Viehmann \cite{Vie} if $G$ is split, by Chen-Kisin-Viehmann \cite{CKV} if $\mu$ is minuscule and by the second named author \cite{N2} if $\mu$ is non-minuscule. Recently, He and Zhou \cite{HZ} determined $\pi_0(X(\mu, b)_P)$ when $b$ is basic \footnote{Actually, they obtained such result for an arbitrary connected reductive group over $F$.}. Moreover, in the residue split case, they obtained a description of $\pi_0(X(\mu, b)_P)$ (with $b$ arbitrary) in terms of straight elements and their associated Levi subgroups (see \cite[Theorem 0.2]{HZ}). These results play an essential role in describing connected components of unramified Rapoport-Zink spaces (see \cite{Ch}, \cite{CKV}) and in verifying the Langlands-Rapoport conjecture for mod-$p$ points on Shimura varieties (see \cite{Ki}, \cite{Zh}).

\subsection{} \label{step} As indicated in the pioneer work \cite{Vie} and \cite{CKV}, we can determine $\pi_0(X(\l, b)_P)$ in the following three steps.

The first step is reduction to adjoint and simple groups. Let $G_{\ad}$ be the adjoint group of $G$, and denote by $b_{\ad}$, $\l_{\ad}$ and $P_{\ad}$ the images of $b$, $\l$ and $P$ in $G_{\ad}(L)$ respectively. Then $\pi_0(X^G(\l, b)_P)$ can be computed from $\pi_0(X^{G_{\ad}}(\l_{\ad}, b_{\ad})_{P_{\ad}})$ via the following Cartesian diagram
\[
\xymatrix{
  \pi_0(X^G(\l, b)_P) \ar[d]_{\eta_G} \ar[r] & \pi_0(X^{G_{\ad}}(\l_{\ad}, b_{\ad})_{P_{\ad}}) \ar[d]^{\eta_{G_{\ad}}} \\
  \pi_1(G)_{\G} \ar[r] & \pi_1(G_{\ad})_{\G}.  }
\]
Once $G$ is adjoint, it is a product of simple groups over $F$. So we can assume $G$ is simple and adjoint.

The second step is reduction to Hodge-Newton indecomposable case. If the pair $(\l, b)$ is Hodge-Newton decomposable, then \cite[Theorem 3.8]{GHN2} says that $X^G(\l, b)_P$ is a disjoint union of closed subvarieties of the form $X^{G'}(\l', b')_{P'}$, where $G' \supseteq T$ is some proper Levi subgroup of $G$ over $F$. Thus, by induction on the semisimple rank of $G$, it suffices to consider the Hodge-Newton indecomposable case.

The third step is explicit computation in Hodge-Newton irreducible case. Suppose $(\l, b)$ is Hodge-Newton indecomposable and $G_{\ad}$ is simple over $F$. By \cite[Theorem 2.5.6]{CKV}, either $b$ is $\s$-conjugate to $t^\l$ with $\l$ central in $G$ or $(\l, b)$ is Hodge-Newton irreducible (see Section \ref{proof-main} for notation). In the former case, we have $$X(\mu, b)_P \cong \JJ_b / (\JJ_b \cap P),$$ which is discrete. Here $\JJ_b=\{g \in G(L); g\i b \s(g)=b\}$ denotes the $\s$-centralizer of $b$. Therefore, it remains to consider the Hodge-Newton irreducible case.
\begin{conj} \label{main}
If $(\l, b)$ is Hodge-Newton irreducible, then the natural projection $\eta_G: G(L) / P \to \pi_1(G)$ induces a bijection $$\pi_0(X(\l, b)_P) \cong (\s-1)\i(\eta_G(t^\l)-\eta_G(b)) \subseteq \pi_1(G).$$
\end{conj}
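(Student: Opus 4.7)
The plan is to establish the bijection by showing three things: the map is well-defined on $\pi_0$, it is surjective, and it is injective. Well-definedness is immediate: applying $\eta_G$ to the defining relation $g\i b\s(g) \in PxP$ and using that every $x \in \Adm(\l)$ has the same $\pi_1(G)$-image as $t^\l$ yields $(\s-1)\eta_G(g) = \eta_G(t^\l) - \eta_G(b)$, so the image lies in the stated coset; discreteness of $\pi_1(G)$ then forces $\eta_G$ to descend through $\pi_0$.

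For surjectivity I would use the left action of the $\s$-centralizer $\JJ_b(F)$ on $X(\l, b)_P$ by multiplication. Fixing a single point (assumed to exist, else both sides are empty), translation by $j \in \JJ_b(F)$ produces a point with $\eta_G$-image shifted by $\eta_G(j)$. A standard computation, using the description of $\JJ_b$ as an inner form of a $\s$-stable Levi (the centralizer of the Newton point) together with Kottwitz's surjectivity, identifies $\eta_G(\JJ_b(F))$ with $\pi_1(G)^\s = (\s-1)\i(0)$, and this is exactly what is needed to sweep out the entire target coset.

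For injectivity I would first reduce to the Iwahori case $P=I$: the projection $G(L)/I \to G(L)/P$ is a Zariski-locally trivial fiber bundle with connected projective fibers $P/I$, so a connectedness statement for $\eta_G$-preimages upstairs propagates to $X(\l,b)_P$. At Iwahori level one has the stratification $X(\l,b)_I = \sqcup_{x \in \Adm(\l)} X_x(b)_I$, and I would combine two tools: (i) the reduction method of G\"ortz-He and its later refinements, which transports points between $X_x(b)_I$ and $X_{x'}(b)_I$ when $x, x'$ are related by conjugation with a simple affine reflection in a controlled way, and (ii) $\AA^1$-sweeps by root subgroups inside an individual stratum, which collapse families of points parametrized by an affine line to a single component. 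The goal is to show that any two points of $X(\l,b)_I$ lying in the same $\eta_G$-fiber can be connected by a finite chain of such moves.

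The main obstacle will be verifying that these local moves actually suffice to span an entire $\eta_G$-fiber, and this is precisely where the Hodge-Newton irreducibility hypothesis is indispensable. The hypothesis forces the roots pairing nontrivially with the Newton point $\nu_b$ to generate a full-rank sublattice modulo the center, so the variety cannot split along a proper $\s$-stable Levi. Concretely, I would expect the core of the argument to be: identify a distinguished admissible element $x_0 \in \Adm(\l)$ (natural candidates being $t^\l$ or a $\t$-minimal variant), verify directly that $X_{x_0}(b)_I$ meets every $\eta_G$-fiber and that its $\pi_0$-image already surjects onto the target, and then iteratively absorb the remaining strata by reduction-method moves along simple reflections whose associated roots lie in the Hodge-Newton spanning set. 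Carrying out this contraction uniformly across the irreducible simple types is likely the most delicate technical step and will probably require a case-by-case examination of the admissible set and of the relevant root system data.
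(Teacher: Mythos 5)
Your outline does follow the same overall strategy as the paper, but two of your steps contain gaps that matter.

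First, the reduction from parahoric level $P$ to Iwahori level does not follow from the statement that $G(L)/I \to G(L)/P$ is a locally trivial fibration with connected fibers $P/I$. The restriction of that fibration to $X(\l,b)_I$ does not have fiber $P/I$; indeed the preimage in $X(\l,b)_I$ of a point of $X(\l,b)_P$ may a priori be empty. What is actually needed is that the induced map $X(\l,b)_I \to X(\l,b)_P$ is \emph{surjective}, which is not formal at all --- it is He's proof of the Kottwitz--Rapoport conjecture (\cite{He2}), and the paper invokes exactly this. Once you have surjectivity, lift two points of $X(\l,b)_P$ in the same $\eta_G$-fiber to $X(\l,b)_I$, connect them there, and push down; the connected-fiber argument plays no role.

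Second, for injectivity you correctly name the two tools (the reduction method and $\AA^1$-curves inside strata), and the Hodge--Newton irreducibility hypothesis is indeed what makes the spanning argument work, but the contraction step is the entire content of the paper and your sketch leaves it essentially untouched. A few specific points of divergence from what actually works: the natural base point is not $t^\l$ (which is typically not straight and not $\s$-conjugate to $b$), but the unique \emph{short} element $\tw$ with $\dot\tw\in[b]$, whose construction already uses \cite{V1} and \cite{N2}. The paper then proves that every connected component meets $X_x(b)$ for some \emph{straight} $x\in\Adm(\l)$ (Proposition~\ref{point}) --- a nontrivial theorem, not a consequence of general reduction moves --- and exploits $X_x(b)\cong\JJ_b/(\JJ_b\cap I)$ to conclude that $\JJ_b$ acts transitively on $\pi_0$. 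Injectivity is then reduced to showing $\JJ_b^\circ=\ker\eta_G\cap\JJ_b$ fixes a fixed component, and this is done by producing, for each of the explicit generators of $\JJ_{\dot\tw}^\circ$ coming from the He--Zhou structure theorem \cite[Theorem 5.5]{HZ}, a chain of rational curves (Propositions~\ref{generator} and~\ref{hyp}). Whether such a chain exists boils down to the combinatorial Proposition~\ref{hyp'}, whose verification is a delicate case-by-case analysis over simply laced, non-simply laced, and $G_2$ root data, occupying Sections~\ref{s-l}--\ref{G2} and the appendix. Your sketch acknowledges that this should be ``case-by-case'' but gives no indication of what invariants control the reduction or why the moves terminate; that is where the real difficulty lies.

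In short: the top-level architecture (surjectivity via $\JJ_b$, reduction to Iwahori, reduction-method plus curve moves) matches the paper, but the surjectivity of the Iwahori-to-parahoric map is a theorem you must cite rather than derive from the fibration, the base point should be the short element rather than $t^\l$, and the transitivity of $\JJ_b$ on $\pi_0$ together with the generator-by-generator curve construction is the substantive part of the proof that the proposal does not actually supply.
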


\subsection{} The main result of this paper is the following.
\begin{thm} \label{intro}
Conjecture \ref{main} is true if $G$ splits over $F$.
\end{thm}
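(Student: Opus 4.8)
The plan is to follow the three-step strategy outlined in the paper, reducing to the Hodge-Newton irreducible case for a simple adjoint split group $G$, and there to prove the conjectured bijection $\pi_0(X(\l,b)_P) \cong (\s-1)\i(\eta_G(t^\l) - \eta_G(b))$. Since $G$ is split, $\s$ acts trivially on $\pi_1(G)$, so the target is simply a coset of $\eta_G(t^\l) - \eta_G(b)$ modulo $(\s-1)\pi_1(G) = 0$; actually for split $G$ the relevant point is that $\pi_1(G)_\G = \pi_1(G)$ and the fiber is a single coset of the image of $(\s-1)$, which by splitness is trivial, so the claim becomes that $\eta_G$ separates the connected components and each fiber of $\eta_G$ restricted to $X(\l,b)_P$ is connected. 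The heart of the matter is therefore a \emph{surjectivity} statement (every element of the predicted set is hit) together with a \emph{connectedness} statement (the fiber over each such element is connected).

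First I would establish the surjectivity/nonemptiness half: show that $\eta_G$ maps $X(\l,b)_P$ onto the full coset. This should follow by combining the nonemptiness criterion for affine Deligne-Lusztig varieties (Mazur-type inequality, as in the work of Gashi, He, and Rapoport-Richartz) with an explicit construction of points in each fiber, translating a base point by suitable elements of $\JJ_b$ and by Iwahori-Weyl group elements lying in $\Adm(\l)$. Second, and this is where the real work lies, I would prove connectedness of each fiber. The strategy, following Viehmann and Chen-Kisin-Viehmann, is to reduce from general parahoric $P$ to the Iwahori case (using that $X(\l,b)_P$ is the image of $X(\l,b)_I$ under a proper projection with connected fibers, so $\pi_0$ is unchanged), and then to analyze $X(\l,b)_I = \cup_{x\in\Adm(\l)} X_x(b)_I$ via its stratification by the $X_x(b)_I$. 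The key geometric input is that adjacent strata are "glued" along subvarieties isomorphic to $\mathbb{A}^1$-bundles or iterated fibrations in affine lines (via the Deligne-Lusztig-style analysis of minimal-length elements in $\s$-conjugacy classes and the partial conjugation action of He), so that the union is connected once one knows the individual pieces meet. Here Hodge-Newton irreducibility is essential: it guarantees (via \cite[Theorem 2.5.6]{CKV} and the class-polynomial/degree arguments of He) that there are "enough" nonempty strata $X_x(b)_I$ with $x$ ranging over a connected-enough subset of $\Adm(\l)$, preventing the variety from breaking apart.

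The main obstacle I anticipate is precisely this last connectedness argument in the Iwahori case. One must show that the "combinatorial graph" whose vertices are the nonempty strata $X_x(b)_I$ and whose edges record when two strata lie in the closure of a common third stratum (or are linked by an $\mathbb{A}^1$) is connected, after passing to the quotient by the $\JJ_b$-action and tracking $\eta_G$. For split groups one has the advantage that the Frobenius acts trivially on $\tW$, so the $\s$-conjugacy classes, the admissible set, and the straight elements are all described by ordinary (untwisted) combinatorics of the affine Weyl group; this is what makes the split case tractable where the general case is open. I would expect to need: (i) a careful induction on semisimple rank using the Hodge-Newton reduction to peel off the case where $\l$ is central (handled, giving the discrete quotient $\JJ_b/(\JJ_b\cap P)$); (ii) the explicit structure of $\JJ_b$ as the $L$-points of an inner form of a Levi, so that $\JJ_b/(\JJ_b\cap P)$ maps onto the relevant piece of $\pi_1$; and (iii) a hands-on verification, perhaps type-by-type or at least using the theory of quasi-split inner forms, that the connecting $\mathbb{A}^1$'s are present. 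A secondary technical point is the passage between mixed and equal characteristic, handled uniformly by working with (perfections of) schemes as in \cite{BS}, \cite{Zhu}, so that the geometric arguments apply verbatim in both settings.
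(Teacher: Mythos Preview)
Your high-level strategy is correct and matches the paper's: reduce to Iwahori level, show $\eta_G$ surjects onto the predicted set using straight elements, and then prove each fiber of $\eta_G$ is connected. However, your plan for the connectedness step has a genuine gap, and one of your reduction claims is stronger than needed and not established.

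First, the reduction to Iwahori: you claim the projection $X(\l,b)_I \to X(\l,b)_P$ has connected fibers so that $\pi_0$ is unchanged. The paper does not prove or use this; it only uses \emph{surjectivity} of the projection (from \cite{He2}), which suffices since one is proving that fibers of $\eta_G$ are connected. Connectedness of the fibers of the level-change map is not obvious and you should not rely on it.

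Second, and more importantly, your mechanism for connecting strata is too vague and points in the wrong direction. You invoke ``the Deligne--Lusztig-style analysis of minimal-length elements'' and ``the partial conjugation action of He'' to glue strata by $\mathbb{A}^1$-bundles. He's reduction method is indeed how one proves Proposition~\ref{point} (each component meets some $X_x(b)$ with $x$ straight), but it does \emph{not} by itself connect two different straight strata $X_{x'}(b)$ and $X_{x''}(b)$ inside $X(\l,b)$. The paper's actual argument is quite different: one fixes the unique short element $\tw$ with $\dot\tw \in [b]$, reduces (via Corollary~\ref{transitive}) to showing that $\JJ_b^\circ = \ker\eta_G \cap \JJ_b$ fixes a single connected component, and then uses the explicit generators of $\JJ_{\dot\tw}^\circ$ from \cite[Theorem~5.5]{HZ}. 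Connecting these generators to the base point is done by writing down explicit morphisms $\mathbb{P}^1 \to X(\l,b)$ of the form $y \mapsto g_0 U_\gamma(y) I$ for suitable roots $\gamma$ (Propositions~\ref{generator} and~\ref{hyp}). The crux is a combinatorial criterion (the notion of a \emph{$z\tw z^{-1}$-permissible} root, Lemma~\ref{bound}) guaranteeing that such a curve stays inside $I\,\Adm(\l)\,I$, together with the key Proposition~\ref{hyp'} asserting that any two $z,z' \in W_0^J$ are linked by a chain of such permissible moves. Proposition~\ref{hyp'} is the technical heart of the paper and is proved by a lengthy case-by-case analysis on the Dynkin type (simply laced, $B/C/F$ via folding, and $G_2$ separately). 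None of this structure---the permissibility criterion, the relation $\leftrightarrow$, the set $C_{\l,J,b}$, or the use of the generator theorem for $\JJ_{\dot\tw}^\circ$---appears in your plan, and the ``combinatorial graph of strata'' you describe does not obviously produce it.
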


Thanks to \cite{He2}, the natural projection $X(\l, b):=X(\l, b)_I \to X(\l, b)_P$ is surjective. Thus, to prove Theorem \ref{intro}, it suffices to consider the Iwahori case. To this end, we adopt the strategies of \cite{CKV} and \cite{HZ}. The starting point is Proposition \ref{point}, which says that each connected component of $X(\l, b)$ intersects with some $X_x(b)$, where $x$ ranges over the set $\Adm(\l)_{str}$ of straight elements (see \S \ref{admissible} for notation) contained in $\Adm(\l)$. Since $X_x(b) \cong \pi_1(G)^\s$, we deduce that $$\eta_G(X(\l, b))= (\s-1)\i(\eta_G(t^\l)-\eta_G(b)) \cong \pi_1(G)^\s.$$ Then it remains to prove that $Q' \in X_{x'}(b)$ and $Q'' \in X_{x''}(b)$ with $x', x'' \in \Adm(\l)_{str}$ are connected in $X(\l, b)$ if $\eta_G(Q')=\eta_G(Q'')$. We verify this statement by constructing curves in $X(\l, b)$ connecting $Q'$ and $Q''$. This completes the proof of Theorem \ref{intro}.

The paper is organized as follows. In \S \ref{pre}, we recall basic notations and lemmas. In \S\ref{proof-main}, we introduce the notion of permissible elements and prove Theorem \ref{intro} by assuming Proposition \ref{hyp'}. In \S\ref{s-l} we prove Proposition \ref{hyp'} for simply laced root systems, and in \S \ref{non-simply-laced} and \S \ref{G2} we deal with the non-simply laced case. In the Appendix, we prove case-by-case several combinatorial properties concerned with simply laced root systems, which are used in the proof of Proposition \ref{hyp'}.

\subsection*{Acknowledgement} We would like to thank Xuhua He for many helpful conversations. Part of the work was done during the second named author's visit to Institute for Advanced Study. He would like to thank the institute for the excellence working atmosphere.

\section{Preliminaries} \label{pre}
\subsection{} \label{setup}
We continue with the notations in the introduction. Moreover, throughout the body of the paper, we assume that $G$ and $T$ split over the valuation ring $\co_F$ of $F$ and that $G$ is simple over $F$.

Let $\car=(Y, \Phi_G^\vee, X, \Phi_G, \SS_0)$ be a based root datum of $G$, where $X$ and $Y$ are the character and cocharacter groups of $T$ respectively together with a perfect pairing $\< , \>: Y \times X \to \ZZ$; $\Phi_G=\Phi \subseteq X$ (resp. $\Phi^\vee \subseteq Y$) is the set of roots (resp. coroots); $\SS_0$ is the set of simple roots appearing in the $\s$-stable Borel subgroup $B \supseteq T$. For $\a \in \Phi$, we denote by $s_\a$ the reflection which sends $\mu \in Y$ to $\mu-\<\mu, \a\> \a^\vee$, where $\a^\vee \in \Phi^\vee$ denotes the corresponding coroot. We say $s_\a$ is a {\it simple reflection} if $\a \in \SS_0$. We also denote by $\SS_0$ the set of simple reflections.

Let $W_0$ be the Weyl group of $T$ in $G$, which is a reflection subgroup of $GL(Y_\RR)$ generated by $\SS_0$. The extended affine Weyl group of $T$ in $G$ is given by $$\tW_G=\tW=N(L) / T(\co_L)=Y \rtimes W_0=\{t^\mu w; \mu \in Y, w \in W_0\}.$$ Let $p: N(L) \to N(L) / T(\co_L) = \tW$ be the quotient map. For $x \in \tW$ we denote by $\dot x \in N(L)$ a lift of $x$ (under $p$). We can embed $\tW$ into the group of affine transformations of $Y_\RR$, where the action of $\tw=t^\mu w$ is given by $v \mapsto \mu+w(v)$. Let $\Phi^+ = \Phi \cap \ZZ_{\ge 0} \SS_0$ be the set of positive roots and let $\ba=\{v \in Y_\RR; 0 < \<\a, v\> < 1, \a \in \Phi^+\}$ be the base alcove.

Let $\tPhi_G=\tPhi=\Phi \times \ZZ$ be the set of (real) affine roots. Let $\tilde \a=(\a, k) \in \tPhi$. We can view $\tilde \a$ as an affine function such that $\tilde \a(v)=-\<\a, v\>+k$ for $v \in Y_\RR$. The induced action of $\tW$ on $\tPhi$ is given by $(\tw(\tilde \a))(v)=\tilde \a(\tw\i(v))$ for $\tw \in \tW$. Let $s_{\tilde \a}=t^{k \a^\vee} s_\a \in \tW$ be the corresponding affine reflection. Then $\{s_{\tilde \a}; \tilde \a \in \tPhi\}$ generates the affine Weyl group $$W^a=\ZZ \Phi^\vee \rtimes W_0=\{t^\mu w; \mu \in \ZZ \Phi^\vee, w \in W_0\}.$$ Moreover, $\tW=W^a \rtimes \Omega$, where $\Omega=\{x \in \tW; x(\ba)=\ba\}$. Set $\tPhi^+=\{\tilde \a \in \tPhi; \tilde \a(\ba) > 0\}$ and $\tPhi^-=-\tPhi^+$. Then $\tPhi=\tPhi^+ \sqcup \tPhi^-$. The associated length function $\ell: \tW \to \NN$ is defined by $\ell(\tw)=|\tPhi^- \cap \tw(\tPhi^+)|$. Let $\SS^a=\{s_{\tilde \a}; \tilde \a \in \tPhi, \ell(s_{\tilde \a})=1\}$. Then $W^a$ is generated by $\SS^a$ and $(W^a, \SS^a)$ is a Coxeter system.

For $\tw, \tw' \in \tW$, we say $\tw \leq \tw'$ if there exist $\tw=\tw_1, \dots, \tw_r=\tw'$ such that $\ell(\tw_k) < \ell(\tw_{k+1})$ and $\tw_k \tw_{k+1}\i$ is an affine reflection for $1 \le k \le r-1$. We call this partial order $\leq$ the Bruhat order on $(\tW, \SS^a)$.

For $\a \in \Phi$, let $U_\a \subseteq G$ be the corresponding root subgroup. We set $$I=T(\co_L) \prod_{\a \in \Phi^+} U_\a(t \co_L) \prod_{\b \in \Phi^+} U_{-\b}(\co_L) \subseteq G(L),$$ which is called a Iwahori subgroup. Here $\co_L$ is the valuation ring of $L$.

\subsection{} \label{admissible} Let $v \in Y_\RR$. We say $v$ is dominant if $\<v, \a\> \ge 0$ for each $\a \in \Phi^+$. We denote by $\bar v$ the unique dominant $W_0$-conjugate of $v$. Let $Y^+$ (resp. $Y_\RR^+$) be the set of dominant vectors in $Y$ (resp. $Y_\RR$). For $\chi_1, \chi_2 \in Y$ we write $\chi_1 \le \chi_2$ if $\chi_2-\chi_1 \in \ZZ_{\ge 0} (\Phi^+)^\vee$, and write $\chi_1 \preceq \chi_2$ if $\bar \chi_1 \le \bar \chi_2$.

For $\tw \in \tW$ there exists a nonzero integer $n$ such that $(\tw)^n=t^\xi$ for some $\xi \in Y$. We define $\nu_{\tw}=\xi / n$, which does not depend on the choice of $n$. For $b', b'' \in G(L)$ we set $\JJ_{b', b''}=\{g \in G(L); g\i b' \s(g)=b''\}$ and put $\JJ_{b'}=\JJ_{b', b''}$ if $b'=b''$.

Let $b \in G(L)$. We denote by $[b]=\{g\i b \s(g); g \in G(L)\}$ the $\s$-conjugate class of $b$. Due to Kottwitz \cite{Ko}, the $\s$-conjugacy class $[b]$ is uniquely determined by two invariants: the Kottwitz point $\eta_G([b]) \in \pi_1(G)$ and the Newton point $\nu_{[b]}^G \in Y_\RR^+$. Here $\eta_G: G(L) \to \pi_1(G)=\tW / W^a \cong Y / \ZZ \Phi^\vee$ is the natural projection which sends $I \tw I$ to $\tw W^a$. To define $\nu_{[b]}^G$, we notice that there exists $x \in \tW$ such that $\dot x \in [b]$. Then $\nu_{[b]}^G=\bar \nu_x$, which does not depend on the choice of $x$.

Let $\l \in Y^+$ and $b \in G(L)$. The {\it closed affine Deligne-Lusztig variety} associated to $(\l, b)$ is defined by $$X(\l, b)=\cup_{\tw \in \Adm(\l)} X_{\tw}(b),$$ where $$\Adm(\l)=\{\tw \in \tW; \tw \leq t^{u(\l)} \text{ for some $u \in W_0$}\}$$ is called the $\l$-admissible set. We refer to \cite{HaNg} and \cite{HH} for more on $\Adm(\l)$. Note that $\Adm(\l') \subseteq \Adm(\l)$ if $\l' \preceq \l$ (see \cite[Lemma 4.5]{Haines}). By \cite{He2}, $X(\l, b) \neq \emptyset$ if and only if $\eta_G(t^\l)=\eta_G([b])$ and $\l-\nu_{[b]}^G \in \RR_{\ge 0} (\Phi^+)^\vee$. Note that $\JJ_b$ acts on $X(\l, b)$ by right multiplication.

We say $\tw \in \tW$ is {\it fundamental} if $I \tw I$ lies in the $I$-$\s$-conjugacy class of $\dot \tw$. By \cite{N1}, $\tw$ is fundamental if and only if it is {\it straight}, that is, $\ell(\tw^n)=n \ell(\tw)$ for each $n \in \ZZ_{\ge 0}$. We refer to \cite{HN} for more descriptions of straight elements.
\begin{prop} [\cite{He}] \label{fund}
Let $x \in \tW$ be a fundamental element. Then $X_x(b) \neq \emptyset$ if and only if $[b]=[\dot x]$, in which case $X_x(b) \cong \JJ_b / (\JJ_b \cap I)$.
\end{prop}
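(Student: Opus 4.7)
The plan is to derive both assertions directly from the defining property of a fundamental element: namely, that $I \dot x I$ is contained in the $I$-$\s$-conjugacy class of $\dot x$, i.e., for every $y \in I \dot x I$ there exists $i \in I$ with $i\i y \s(i) = \dot x$.

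For the equivalence $X_x(b) \neq \emptyset \Leftrightarrow [b] = [\dot x]$: the backward direction is immediate, because if $h\i b \s(h) = \dot x$ for some $h \in G(L)$, then $hI \in X_x(b)$. For the forward direction, suppose $g \in G(L)$ satisfies $g\i b \s(g) \in I \dot x I$. By the fundamentality of $x$, there exists $i \in I$ with $i\i \bigl(g\i b \s(g)\bigr) \s(i) = \dot x$, and rearranging yields $(gi)\i b \s(gi) = \dot x$, so $[b] = [\dot x]$.

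For the isomorphism, first reduce to the case $b = \dot x$: if $[b] = [\dot x]$, pick $h$ with $h\i \dot x \s(h) = b$; then left multiplication by $h$ identifies $X_x(\dot x) \cong X_x(b)$ and conjugation by $h$ identifies $\JJ_{\dot x} \cong \JJ_b$ compatibly. Now define $\phi \colon \JJ_b / (\JJ_b \cap I) \to X_x(b)$ by $j (\JJ_b \cap I) \mapsto j I$. This is well defined since $j\i b \s(j) = b = \dot x \in I \dot x I$, and is clearly injective. For surjectivity, given any $gI \in X_x(b)$, the argument from the previous paragraph produces $i \in I$ with $gi \in \JJ_b$, so $gI = (gi)I$ lies in the image.

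The only step that is not pure bookkeeping is confirming that $\phi$ is an isomorphism in the appropriate (perfect) scheme-theoretic sense, rather than just a bijection of $\kk$-points. This is a mild point: $\JJ_b \cap I$ is open in $\JJ_b$, so $\JJ_b / (\JJ_b \cap I)$ is a discrete set, and the previous surjectivity analysis shows that $X_x(b)$ coincides with the $\JJ_b$-orbit of $eI$, on which distinct cosets give distinct, reduced, zero-dimensional points. Thus $\phi$ is automatically an isomorphism. The main obstacle, if any, is really just packaging the definition of fundamentality carefully; the substance of the statement is the defining property itself.
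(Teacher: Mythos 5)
Your proof is correct in substance, and it is the standard direct argument: both directions of the nonemptiness criterion and the surjectivity of the orbit map follow in one step from the defining property of a fundamental element, and the bijection $\JJ_b/(\JJ_b \cap I) \to X_x(b)$ is then elementary bookkeeping. Note that the paper does not give a proof of this proposition at all --- it is cited from He's paper \cite{He} as a black box --- so there is nothing in the text to compare against line by line; what you have written is the expected unwinding of the definition.

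Two small remarks. First, in your reduction to $b=\dot x$ the direction of the translation is off: if you pick $h$ with $h\i \dot x \s(h) = b$, then the identity $g\i b \s(g) = (hg)\i \dot x \s(hg)$ shows that left multiplication by $h$ carries $X_x(b)$ into $X_x(\dot x)$, not the other way around; either swap the roles of $b$ and $\dot x$ in the choice of $h$, or multiply by $h\i$. This is cosmetic, and in fact you could dispense with the reduction entirely by mapping $j \mapsto j h I$ from $\JJ_b$ directly to $X_x(b)$. Second, your closing paragraph about upgrading the bijection of $\kk$-points to a scheme-theoretic statement is a reasonable instinct, but the group $\JJ_b$ here is the abstract group of $L$-points rather than the $\kk$-points of a group scheme, and the isomorphism in the proposition is used in the paper only at the level of point sets (to conclude that $X_x(b)$ is discrete and carries a transitive $\JJ_b$-action); there is no need to chase a stronger geometric assertion.
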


The following result plays a foundational role in this paper.
\begin{prop} [\cite{He2}, \cite{N2}] \label{point}
If $X(\l, b) \neq \emptyset$, then each connected component of $X(\l, b)$ intersects with $X_x(b)$ for some straight element $x \in \Adm(\l)$.
\end{prop}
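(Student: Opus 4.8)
The plan is to combine two already-available ingredients: the Deligne-Lusztig–type reduction of He \cite{He2}, which expresses $X_{\tw}(b)$ for arbitrary $\tw$ in terms of $X_{x}(b)$ with $x$ fundamental, and a combinatorial fact about the admissible set, namely that running the reduction steps inside $\Adm(\l)$ never forces us to leave $\Adm(\l)$. Concretely, recall from \cite{He2} the following: for $\tw \in \tW$ and $s \in \SS^a$, if $\ell(s\tw s) < \ell(\tw)$ then $X_{\tw}(b)$ admits a morphism to $X_{s\tw s}(b) \sqcup X_{s\tw}(b)$ (an iterated fibration with affine-space fibers over one piece and an isomorphism-up-to-affine-bundle onto the other), and if $\ell(s\tw s) = \ell(\tw)$ with $s\tw < \tw$ then $X_{\tw}(b) \cong X_{s\tw s}(b)$. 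In either case every connected component of $X_{\tw}(b)$ maps onto a connected component of $X_{\tw'}(b)$ for some $\tw'$ that is \emph{smaller or of the same length and conjugate by a simple reflection}, and is connected to it (the fibers are affine spaces, hence connected). Iterating, each connected component of $X_{\tw}(b)$ is connected inside $\cup_{\tw' \le \tw'' \text{ reachable}} X_{\tw'}(b)$ to a connected component of $X_{x}(b)$ for some element $x$ lying in the \emph{$\s$-conjugacy--by--simple--reflections closure of the down-set of $\tw$}, and by He's theorem the minimal such $x$ may be taken to be of minimal length in its $\s$-conjugacy class, i.e.\ straight (fundamental).

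First I would set up the bookkeeping: introduce, for a finite subset $\mathcal{A} \subseteq \tW$ stable under the operations $\tw \mapsto s\tw$ (when length-decreasing) and $\tw \mapsto s\tw s$, the variety $X_{\mathcal{A}}(b) = \cup_{\tw \in \mathcal{A}} X_{\tw}(b)$, and observe that He's reduction shows every connected component of $X_{\mathcal{A}}(b)$ meets $X_{x}(b)$ for some straight $x \in \mathcal{A}$. Then the whole proposition reduces to the assertion that $\Adm(\l)$ is stable under these two operations. For the conjugation step $\tw \mapsto s\tw s$ (length preserved), stability of $\Adm(\l)$ is classical — $\Adm(\l)$ is closed under conjugation by $\SS_0$ and, more to the point, a length-preserving cyclic shift keeps one inside $\Adm(\l)$ because $\Adm(\l)$ is a union of Bruhat down-sets and cyclic-shift-equivalent elements of the same length have the same image in the relevant quotient; this is the content of \cite{HN} combined with the "$P$-alcove" characterization. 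For the length-decreasing step $\tw \mapsto s\tw$ and $\tw \mapsto s\tw s$, one uses that $\Adm(\l)$ is a \emph{closed} subset of $(\tW,\le)$ in the Bruhat order in the sense that it is a union of lower intervals $[e, t^{u(\l)}]$: if $\tw \in \Adm(\l)$ and $\tw' \le \tw$ then $\tw' \in \Adm(\l)$, so both $s\tw \le \tw$ and $s\tw s \le \tw$ (the latter when $\ell(s\tw s) \le \ell(\tw)$) stay in $\Adm(\l)$. This is exactly where the definition $\Adm(\l) = \{\tw; \tw \le t^{u(\l)}\}$ is used in an essential way.

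The main obstacle is the length-\emph{preserving} conjugation $\tw \mapsto s\tw s$: here $s\tw s$ need not be Bruhat-below any $t^{u(\l)}$ a priori, so membership in $\Adm(\l)$ is not automatic from the down-set description alone. The resolution is to invoke the characterization of $\Adm(\l)$ via $\s$-conjugacy (or rather ordinary conjugacy) classes of straight elements — namely, an element $\tw$ of minimal length in its conjugacy class lies in $\Adm(\l)$ iff its Newton point is $\preceq \l$ and its Kottwitz point matches $\eta_G(t^\l)$, together with He–Nie's result that cyclic shifts within a conjugacy class preserve minimal length and hence membership. So the key step is: \emph{length-preserving elementary conjugations preserve $\Adm(\l)$}, which follows from \cite[\S2--3]{HN} (equivalently the "$\Adm(\l)$ is a union of conjugacy-class fragments cut out by Newton and Kottwitz data"). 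Granting this, I would conclude by running He's reduction entirely inside $\Adm(\l)$: every connected component of $X(\l,b) = X_{\Adm(\l)}(b)$ is connected, via chains of affine-space fibrations, to some $X_x(b)$ with $x \in \Adm(\l)$ straight, and since $X_x(b) \ne \emptyset$ forces (Proposition \ref{fund}) $[b] = [\dot x]$, such $x$ exists whenever the component is nonempty. This completes the proof.
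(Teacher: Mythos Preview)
The paper does not give its own proof of this proposition; it is stated as a result imported from \cite{He2} and \cite{N2}. Your outline follows the same strategy as those references (He's Deligne--Lusztig reduction together with closure properties of $\Adm(\l)$), so in that sense there is nothing to compare. But two points in your write-up are genuine gaps rather than mere abbreviation.

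First, your passage from ``the reduction map has connected (affine-space) fibres'' to ``the corresponding points lie in the same connected component of $X(\l,b)$'' is not justified. The reduction maps are \emph{abstract} morphisms $X_{\tw}(b)\to X_{\tw'}(b)$, not inclusions inside $G(L)/I$; connectedness of fibres gives only an abstract bijection on $\pi_0$, not a path inside $X(\l,b)$. In the length-decreasing case $\ell(s\tw s)<\ell(\tw)$ this is repairable: the fibres are genuine curves in $gP_s/I\cong\PP^1$, and since $s\tw,\tw s,s\tw s\le\tw$ one has $gP_s/I\subseteq\overline{X_{\tw}(b)}\subseteq X(\l,b)$. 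But in the cyclic-shift case $\ell(s\tw s)=\ell(\tw)$ with $s\tw<\tw<\tw s$, the same $\PP^1$ generically lands in $X_{\tw s}(b)\cup X_{s\tw s}(b)$, and $\tw s$ need not lie in $\Adm(\l)$. So the curve connecting $gI$ to its cyclic-shift image can leave $X(\l,b)$, and you have not explained why $gI$ and that image lie in the same component of $X(\l,b)$. The actual argument in the references handles this more carefully.

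Second, your justification that length-preserving cyclic shifts preserve $\Adm(\l)$ is wrong as stated. You write that $\Adm(\l)$ is ``a union of conjugacy-class fragments cut out by Newton and Kottwitz data''; but Newton and Kottwitz points are constant on an entire $W^a$-conjugacy class, so if this characterisation held, $\Adm(\l)$ would be a union of full conjugacy classes --- which it is not (for instance $t^{2\l}$ has the same invariants as $t^{\l}$ up to scaling issues, and more concretely non-minimal-length elements in a class can fall outside $\Adm(\l)$ while minimal ones lie inside). The statement you want is true, but its proof (in \cite{He2}) uses the admissibility criterion and the specific combinatorics of Bruhat order under cyclic shift, not a Newton/Kottwitz characterisation.
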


\subsection{} \label{parabolic} Let $M \subseteq G$ be a Levi subgroup containing $T$. Then $B \cap M \supseteq T$ is a Borel subgroup of $M$. By replacing the triple $(T, B, G)$ with $(T, B \cap M, M)$, we can define, as in \S \ref{setup}, $\Phi_M^+$, $\tW_M$, $\SS_M^a$, $\Omega_M$, $\tPhi_M^+$, $\leq_M$, $I_M$ and so on. For $K \subseteq \SS_M^a$ we denote by $W_K$ the reflection subgroup of $W^a$ generated by $K$. Let $W \supseteq W_K$ be a subgroup of $\tW$. We set $W {}^K=\{x \in W; x \leq x s \text{ for each } s \in K\}$ and ${}^K W=(W {}^K)\i$.

For $v \in Y_\RR$ we set $\Phi_v=\{\a \in \Phi; \a(v)=0\}$ and denote by $M_v$ the Levi subgroup of $G$ generated by $T$ and the root subgroups $U_\a$ for $\a \in \Phi_v$. If $v$ is dominant, we set $J_v=\{s \in \SS_0; s(v)=v\}$. For $J \subseteq \SS_0$ there exists $v \in Y_\RR^+$ such that $J_v=J$. We write $\Phi_J=\Phi_{M_v}$, $\Phi_J^+=\Phi_{M_v}^+$, $\tW_J=\tW_{M_v}$, $\Omega_J=\Omega_{M_v}$,  $\leq_J=\leq_{M_v}$ and so on.
\begin{lem} \label{compare}
Let $M \supseteq T$ be a Levi subgroup of $G$. Let $x, y \in \tW_M$ such that $x \leq_M y$. Then $z' x z\i \leq z' y z\i$ for $z, z' \in \tW^{\SS_M^a}$.
\end{lem}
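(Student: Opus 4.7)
The plan is to reduce to a single reflection step in the $M$-Bruhat order, and then translate the resulting inequality into $\tW$ via the action on affine roots. By the definition of $\leq_M$, the relation $x \leq_M y$ is witnessed by a chain $x = x_0, \dots, x_r = y$ with each $x_k x_{k+1}\i$ an affine reflection of $\tW_M$ and $\ell_M(x_k) < \ell_M(x_{k+1})$. Since the Bruhat order on $\tW$ is transitive, it suffices to handle a single step; so I assume $y = t x$ where $t = s_\beta$ for some $\beta \in \tPhi_M^+$ and $\ell_M(y) > \ell_M(x)$.

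Using the standard identity $g\, s_\alpha\, g\i = s_{g(\alpha)}$ in $\tW$, I rewrite
$$z' y z\i = (z' t z'\i)(z' x z\i) = s_{z'(\beta)} \cdot (z' x z\i),$$
so $z' y z\i$ and $z' x z\i$ differ by a single affine reflection of $\tW$. The crux is the following claim: for every $w \in \tW^{\SS_M^a}$, one has $w(\tPhi_M^+) \subseteq \tPhi^+$. The defining condition $\ell(w s) > \ell(w)$ for each $s = s_{\tilde\alpha} \in \SS_M^a$ (with $\tilde\alpha \in \tPhi_M^+$ simple) translates, via the Coxeter-theoretic formula $\ell(u\, s_\gamma) > \ell(u) \iff u(\gamma) \in \tPhi^+$ valid for any $\gamma \in \tPhi^+$, into $w(\tilde\alpha) \in \tPhi^+$. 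Since every $\beta \in \tPhi_M^+$ is a non-negative integer combination of simple positive affine roots of $M$, and $\tW$ acts linearly on the space of affine functions, $w(\beta)$ is a non-negative combination of elements of $\tPhi^+$ and hence itself lies in $\tPhi^+$.

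Applying the claim to $z'$ gives $z'(\beta) \in \tPhi^+$, so $s_{z'(\beta)}$ is a reflection of $\tW$ attached to a positive affine root. Then $\ell(s_{z'(\beta)} \cdot z' x z\i) > \ell(z' x z\i)$ is equivalent to $(z' x z\i)\i (z'(\beta)) = z(x\i\beta) \in \tPhi^+$. The hypothesis $\ell_M(s_\beta x) > \ell_M(x)$ gives $x\i(\beta) \in \tPhi_M^+$ by the same formula internal to $\tW_M$, and a second application of the key claim, now to $z$, yields $z(x\i\beta) \in \tPhi^+$. Hence $z' x z\i \leq z' y z\i$ in the single-step case; concatenating across the original chain finishes the reduction and thus the proof. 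The main difficulty is establishing the key claim: it rests on $\SS_M^a$ being the canonical simple system for the reflection subgroup $W_M^a \subseteq (\tW,\SS^a)$, together with the standard structural fact that the positive affine roots of $M$ are non-negative integer combinations of its simple positive affine roots, both of which are well-known features of the affine Weyl group of a Levi subgroup.
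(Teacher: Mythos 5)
Your proof is correct and follows essentially the same route as the paper's: reduce to a single reflection step, conjugate the reflection by $z'$ (or $z$), and use the fact that elements of $\tW^{\SS_M^a}$ carry $\tPhi_M^+$ into $\tPhi^+$ to conclude via the standard length criterion. The only differences are cosmetic — you work with left multiplication $y = s_\beta x$ whereas the paper writes $y = x s_{\tilde\alpha}$ — and you spell out the justification of the key containment $w(\tPhi_M^+) \subseteq \tPhi^+$ (via $\tPhi_M^+ = \tPhi_M \cap \tPhi^+$ and nonnegative integral combinations of the simple affine roots of $M$), which the paper uses silently.
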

\begin{proof}
We may assume $y=x s_{\tilde \a}$ for some $\tilde \a \in \tPhi_M^+$. Since $x \leq_M y$, we have $x(\tilde \a) \in \tPhi_M^+$. Let $\tilde \g=z(\tilde \a) \in \tPhi^+$. Then $z' y z\i=z' x z\i s_{\tilde \g}$ and $z' x z\i(\tilde \g)= z' x(\tilde \a)>0$. This means $z' x z\i \leq z' y z\i$ as desired.
\end{proof}

Let $J \subseteq \SS_0$. For $\chi, \chi' \in Y$ we write $\chi \le_J \chi'$ if $\chi'-\chi \in \ZZ_{\ge 0} (\Phi_J^+)^\vee$, and write $\chi \le^J \chi'$ if $\chi \le \chi'+ \phi$ for some $\phi \in \ZZ \Phi_J^\vee$.
\begin{lem} \label{f2}
Let $\tw \in \Omega_J$. Let $\a \in \Phi^+$ and $z \in W_0^J$ such that $s_\a z \in W_0^J$. For $u \in W_J$ we have

(1) $z \tw z\i s_\a \in \Adm(\mu-u\i z\i(\a^\vee)) \cup \Adm(\mu)$;

(2) $s_\a z \tw z\i \in \Adm(\mu+u\i z\i(\a^\vee)) \cup \Adm(\mu)$.
\end{lem}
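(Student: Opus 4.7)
The plan is to prove (1) by a length-dichotomy argument, applying Lemma \ref{compare} to transport admissibility through conjugation by both $z$ and $s_\a z$; assertion (2) is symmetric, obtained by placing $s_\a$ on the left instead of the right. Throughout, let $\mu$ be the coweight attached to $\tw$ via the decomposition $\tw = t^\mu\tau$ with $\tau \in W_J$.

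Since $\tw \in \Omega_J$ has length zero in $(\tW_J, \SS_J^a)$, we have $\tw \leq_J t^{u(\mu)}$ for every $u \in W_J$. Because $z$ and $s_\a z$ both lie in $W_0^J \subseteq \tW^{\SS_J^a}$, Lemma \ref{compare} applied with outer factor $z = z'$, respectively with $s_\a z = z'$, yields
\[
z\tw z^{-1} \leq t^{zu(\mu)} \quad\text{and}\quad (s_\a z)\tw(s_\a z)^{-1} \leq t^{s_\a z u(\mu)}
\]
in $(\tW,\SS^a)$. In particular $z\tw z^{-1} \in \Adm(\mu)$.

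Next I would split on the sign of $\ell(z\tw z^{-1} s_\a) - \ell(z\tw z^{-1})$. If it is negative, then $z\tw z^{-1} s_\a \leq z\tw z^{-1} \leq t^{zu(\mu)}$ and the hereditary property of admissible sets gives $z\tw z^{-1} s_\a \in \Adm(\mu)$, concluding this case. Otherwise, I propose $v = s_\a z u \in W_0$ as a witness for admissibility in $\Adm(\mu - u^{-1}z^{-1}(\a^\vee))$: using $s_\a(\a^\vee) = -\a^\vee$, one computes
\[
v\bigl(\mu - u^{-1}z^{-1}(\a^\vee)\bigr) \;=\; s_\a z u(\mu) - s_\a(\a^\vee) \;=\; s_\a zu(\mu) + \a^\vee,
\]
so the task reduces to deriving $z\tw z^{-1} s_\a \leq t^{s_\a zu(\mu) + \a^\vee}$ from the already-established $(s_\a z)\tw(s_\a z)^{-1} \leq t^{s_\a zu(\mu)}$. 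This I would do by left-multiplying by $s_\a$ and carefully handling the affine reflection via the identity $s_{(\a,1)} = t^{\a^\vee} s_\a$ in $\tW$.

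The main obstacle is this last step: verifying that, under the length-increase hypothesis $\ell(z\tw z^{-1} s_\a) > \ell(z\tw z^{-1})$, left-multiplying the inequality $(s_\a z)\tw(s_\a z)^{-1} \leq t^{s_\a zu(\mu)}$ by $s_\a$ yields exactly $z\tw z^{-1} s_\a \leq t^{s_\a zu(\mu)+\a^\vee}$, with the translation correction being the single copy of $\a^\vee$ appearing in the statement and nothing more. This is a standard but delicate application of the lifting property for the Bruhat order; the paired conditions $z, s_\a z \in W_0^J$ are used precisely in tandem, with $z \in W_0^J$ controlling the first half of the argument and $s_\a z \in W_0^J$ guaranteeing that $s_\a$ slides past every simple reflection in $\SS_J^a$ without triggering extra length drops.
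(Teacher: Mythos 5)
Your proposal diverges from the paper's argument, and the place where you flag "the main obstacle" is in fact a genuine gap. The step "left-multiply the inequality $(s_\a z)\tw(s_\a z)^{-1} \leq t^{s_\a zu(\mu)}$ by $s_\a$" relies on a lifting property for the Bruhat order, but the lifting lemma is only valid for \emph{simple} reflections $s \in \SS^a$. Here $s_\a$ (for $\a \in \Phi^+$ arbitrary) is a finite-Weyl reflection which is generally not a simple reflection of the affine Coxeter system $(W^a,\SS^a)$, and for non-simple reflections multiplication does not interact with $\leq$ in the way you need; one can already build counterexamples in $S_3$. Your length dichotomy on $\ell(z\tw z^{-1}s_\a) - \ell(z\tw z^{-1})$ is also not the relevant split: which admissible set $z\tw z^{-1}s_\a$ lands in is governed by the sign of $\<zu(\mu),\a\>$, and these two dichotomies need not coincide, so even if the lifting issue could be repaired you would still have cases misaligned with the desired conclusion.

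The paper avoids the obstacle entirely by doing the case analysis at the translation level before ever invoking Lemma~\ref{compare}. It first shows directly that $t^{zu(\mu)}s_\a$ lies in $\Adm(\mu-u^{-1}z^{-1}(\a^\vee))\cup\Adm(\mu)$: if $\<zu(\mu),\a\>\ge 1$ then $t^{zu(\mu)}s_\a \leq t^{zu(\mu)}$, and if $\<zu(\mu),\a\>\le 0$ then $t^{zu(\mu)}s_\a = t^{zu(\mu)-\a^\vee}\,t^{\a^\vee}s_\a \leq t^{zu(\mu)-\a^\vee}$ because $t^{\a^\vee}s_\a$ is the affine reflection $s_{(\a,1)}$ and the pairing condition makes it a right descent. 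Only then does the paper apply Lemma~\ref{compare}, and crucially it does so \emph{once}, with asymmetric conjugators: write $z\tw z^{-1}s_\a = z\,\tw\,(s_\a z)^{-1}$ and $t^{zu(\mu)}s_\a = z\,t^{u(\mu)}\,(s_\a z)^{-1}$, take $z'=z$ and the inner conjugator $s_\a z$, and conclude $z\tw z^{-1}s_\a \leq t^{zu(\mu)}s_\a$. This is precisely why the hypothesis $z, s_\a z \in W_0^J$ is paired: one element plays the role of the outer conjugator, the other the inner one, in a single application of Lemma~\ref{compare}. In your scheme you use Lemma~\ref{compare} twice symmetrically and then try to bridge the two outputs, which is where the argument breaks down.
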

\begin{proof}
Notice that $t^{z u(\mu)} s_\a \leq t^{z u(\mu)}$ if $\<z u(\mu), \a\> \ge 1$, and $t^{z u(\mu)} s_\a = t^{z u(\mu) - \a^\vee} t^{\a^\vee}s_\a \leq t^{z u(\mu)-\a^\vee}$ if $\<z u(\mu), \a\> \le 0$. By symmetry, $s_\a t^{z u(\mu)} \leq t^{z u(\mu)}$ if $\<z u(\mu), \a\> \le -1$, and $s_\a t^{z u(\mu)} = t^{\a^\vee}s_\a t^{z u(\mu) + \a^\vee} \leq t^{z u(\mu)+\a^\vee}$ if $\<z u(\mu), \a\> \ge 0$. Therefore, $t^{z u(\mu)} s_\a \in \Adm(\mu-u\i z\i(\a^\vee)) \cup \Adm(\mu)$ and $s_\a t^{z u(\mu)} \in \Adm(\mu+u\i z\i(\a^\vee)) \cup \Adm(\mu)$. Notice that $\tw \leq_J t^{u(\mu)}$ and $z, s_\a z \in \tW^{\SS^a_{M_J}}$. By Lemma \ref{compare}, we have $z \tw z\i s_\a \leq t^{z u(\mu)} s_\a \in \Adm(\mu-u\i z\i(\a^\vee)) \cup \Adm(\mu)$ and $s_\a z \tw z\i \leq s_\a t^{z u(\mu)} \in \Adm(\mu+u\i z\i(\a^\vee)) \cup \Adm(\mu)$ as desired.
\end{proof}

\begin{lem} \label{f5}
Assume $\Phi$ is simply laced. Let $\g, \g' \in \Phi^+ - \Phi_J$ such that $\g - \g' \in \ZZ \Phi_J$. Then $\g, \g'$ are conjugate under $W_J$.
\end{lem}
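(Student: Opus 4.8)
The plan is to work with a $W_0$-invariant positive definite symmetric bilinear form $(\,\cdot\,,\,\cdot\,)$ on $X_\RR$. Since $G$ is simple and $\Phi$ is simply laced, all roots are $W_0$-conjugate and of equal length, so I may normalize the form so that $(\a,\a)=2$ for every $\a\in\Phi$. Then $\langle\a^\vee,\chi\rangle=(\a,\chi)$ for $\a\in\Phi$ and $\chi\in X$, the reflection $s_\a$ acts on $X$ by $\chi\mapsto\chi-(\a,\chi)\a$, and $(\a,\b)\in\{0,\pm1\}$ whenever $\a,\b\in\Phi$ with $\a\neq\pm\b$. I will also use that $W_J$ preserves $\Phi^+-\Phi_J$: any element of $W_J$ is a product of the simple reflections $s_\a$ with $\a\in J$, each of which changes only the coefficient of $\a$ in the simple root expansion, so a root with a strictly positive coefficient on some simple root outside $J$ is carried to another such root, and $\Phi^+-\Phi_J$ is exactly the set of such roots.

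With this set up I would argue by induction. Write $\g-\g'=\sum_{\a\in J}c_\a\a$ with $c_\a\in\ZZ$ (possible since $\g-\g'\in\ZZ\Phi_J$), put $\delta=\g-\g'$ and $N(\delta)=\sum_{\a\in J}|c_\a|$, and induct on $N(\delta)$. If $N(\delta)=0$ then $\g=\g'$. If $\delta\neq0$, then $0<(\delta,\delta)=\sum_{\a\in J}c_\a(\a,\delta)$, so some simple root $\a\in J$ has $c_\a(\a,\delta)>0$; after possibly exchanging $\g$ and $\g'$ I may assume $c_\a>0$ and $(\a,\delta)>0$. Since $\g,\g'\notin\Phi_J$ we have $\g,\g'\neq\pm\a$, hence $(\a,\g),(\a,\g')\in\{0,\pm1\}$, and $(\a,\g)-(\a,\g')=(\a,\delta)\geq1$ forces $(\a,\g)\geq0$. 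If $(\a,\g)=1$, replace $\g$ by $\g_1:=s_\a(\g)=\g-\a$; if $(\a,\g)=0$, then $(\a,\g')=-1$, and replace $\g'$ by $\g_1':=s_\a(\g')=\g'+\a$. In either case the new pair again lies in $\Phi^+-\Phi_J$ (by $W_J$-stability of that set), its difference equals $\delta-\a$, and since $c_\a\geq1$ we get $N(\delta-\a)=N(\delta)-1$. The induction hypothesis makes the new pair $W_J$-conjugate, and since $s_\a\in W_J$ the original pair is too.

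The only real thing to get right is the inductive step: one must check that the chosen simple reflection genuinely decreases $N$ and keeps the root inside $\Phi^+-\Phi_J$, and—more essentially—that the right simple root to reflect exists. This last point is precisely where the simply laced hypothesis is used: positive definiteness of the form on $X_\RR$ supplies the root $\a$ with $c_\a(\a,\delta)>0$, and the bound $(\a,\g),(\a,\g')\in\{0,\pm1\}$ prevents a single simple reflection from overshooting. I do not anticipate a deeper obstacle; in the non-simply-laced situation the statement fails outright, which is the reason the later sections handle that case by entirely different means.
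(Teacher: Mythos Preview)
Your proof is correct, but it takes a different route from the paper's.  The paper's argument is a three-line structural observation: replace $\g$ and $\g'$ by their unique $J$-dominant $W_J$-conjugates; note that in a simply laced system every root of $\Phi^+-\Phi_J$ is $J$-minuscule (since $\<\a^\vee,\g\>\in\{-1,0,1\}$ for $\a\in\Phi_J$, $\g\notin\Phi_J$); and conclude that two $J$-dominant $J$-minuscule weights lying in the same coset modulo $\ZZ\Phi_J$ must coincide.  Your argument is more elementary and constructive: you explicitly build a chain of simple reflections in $W_J$ carrying $\g$ to $\g'$, via induction on $N(\d)=\sum_{\a\in J}|c_\a|$.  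Both proofs hinge on the same simply-laced ingredient, the bound $(\a,\g)\in\{0,\pm1\}$; the paper packages this as ``$J$-minuscule'' and appeals to uniqueness of dominant minuscule representatives, whereas you use it directly to ensure a single reflection decreases $N$ by exactly one without overshooting.  The paper's version is slicker and connects to notions (minuscule, dominant representatives) used elsewhere in the argument; yours is self-contained and makes the $W_J$-conjugacy completely explicit.
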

\begin{proof}
We can assume $\g, \g'$ are $J$-dominant. Since $\Phi$ is simply laced, $\g, \g'$ are also $J$-minuscule. So we have $\g=\g'$ since $\g-\g' \in \ZZ \Phi_J$.
\end{proof}

For $D \subseteq \Phi^+$ we denote by $D_{\max}$ (resp. $D_{\max, J}$) the set of maximal elements of $D$ with respect to the partial order $\le$ (resp. $\le_J$).
\begin{lem} \label{f3}
Assume $\Phi$ is simply laced. Let $z \in W_0^J$ and let $D \subseteq \Phi^+ - \Phi_J$ be a $W_J$-stable subset such that $D \cap z\i(\Phi^-) \neq \emptyset$. Then $s_{z(\b)}z=z s_\b \in W_0^J$ for $\b \in (D \cap z\i(\Phi^-))_{\max, J}$.
\end{lem}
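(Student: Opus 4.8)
The plan is to reduce the statement to tracking the action of $z s_\b$ on $\Phi_J^+$. First note that $z s_\b z\i = s_{z(\b)}$, so $s_{z(\b)} z = z s_\b$ and it suffices to prove $z s_\b \in W_0^J$. I will use the standard description that $W_0^J$ consists precisely of those $w \in W_0$ with $w(\Phi_J^+) \subseteq \Phi^+$ (equivalently $w(\a) \in \Phi^+$ for every simple $\a$ in $J$), so the task becomes: show $z s_\b(\a) \in \Phi^+$ for every $\a \in \Phi_J^+$. Fix such an $\a$. Since $\b \in \Phi^+ - \Phi_J$ we have $\a \neq \pm\b$, and as $\Phi$ is simply laced the Cartan integer $\<\a, \b^\vee\>$ lies in $\{-1, 0, 1\}$; hence $s_\b(\a) = \a - \<\a, \b^\vee\>\b$ equals one of $\a$, $\a - \b$, $\a + \b$, and in the latter two cases the displayed element is automatically a root.

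In the case $s_\b(\a) = \a$ one has $z s_\b(\a) = z(\a) \in \Phi^+$, because $z \in W_0^J$. In the case $s_\b(\a) = \a - \b$ one has $z s_\b(\a) = z(\a - \b) = z(\a) + \bigl(-z(\b)\bigr)$, which is a root; here $z(\a)$ is positive since $z \in W_0^J$, and $-z(\b)$ is positive since $\b \in z\i(\Phi^-)$, so being a non-negative combination of positive roots and a root, $z s_\b(\a) \in \Phi^+$. Neither of these two cases uses the $W_J$-stability of $D$ or the maximality of $\b$.

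The case $s_\b(\a) = \a + \b$ is the only one where the hypotheses are needed, and I expect it to be the crux of the argument. Since $\a \in \Phi_J$ we have $s_\a \in W_J$, and $\a + \b = s_\a(\b)$; as $D$ is $W_J$-stable and $\b \in D$, this places $\a + \b \in D$. Moreover $(\a+\b) - \b = \a \in \Phi_J^+$, so $\a + \b$ is strictly larger than $\b$ with respect to $\le_J$. Because $\b$ is maximal in $D \cap z\i(\Phi^-)$ for $\le_J$ and $\a + \b \in D$ is strictly larger, we must have $\a + \b \notin z\i(\Phi^-)$; since $\a + \b$ is a nonzero root, this forces $z(\a + \b) \in \Phi^+$, i.e.\ $z s_\b(\a) \in \Phi^+$.

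Combining the three cases gives $z s_\b(\Phi_J^+) \subseteq \Phi^+$, hence $z s_\b = s_{z(\b)} z \in W_0^J$, which is the assertion. The only point that requires attention is the bookkeeping in the third case: one must use $W_J$-stability to conclude that $\a + \b$ lies in $D$ itself (not merely in $\Phi^+ - \Phi_J$), so that the maximality of $\b$ can be applied; everything else is a direct sign computation, and simple-lacedness is used exactly to guarantee $s_\b(\a) \in \{\a, \a \pm \b\}$ with the last two being roots.
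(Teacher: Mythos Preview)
Your proof is correct and follows essentially the same approach as the paper's: both use that simple-lacedness restricts $\langle\alpha,\beta^\vee\rangle$ to $\{-1,0,1\}$, dispose of the nonnegative cases via $z\in W_0^J$ and $z(\beta)<0$, and handle the remaining case $\langle\alpha,\beta^\vee\rangle=-1$ by observing $\alpha+\beta=s_\alpha(\beta)\in D$ together with the $\le_J$-maximality of $\beta$. The only cosmetic difference is that the paper argues by contradiction (assuming some $\gamma\in\Phi_J^+$ with $zs_\beta(\gamma)<0$) whereas you run through the three cases directly.
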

\begin{proof}
If $z s_\b \notin W_0^J$, there exists $\g \in \Phi_J^+$ such that $z s_\b (\g) < 0$. Noticing that $\b \neq \pm\g$, we have $\<\b^\vee, \g\>=\<\g^\vee, \b\> \in \{-1, 0, 1\}$ since $\Phi$ is simply laced. If $\<\b^\vee, \g\> \ge 0$, then $z s_\b (\g)=z(\g)-\<\b^\vee, \g\> z(\b) > 0$, a contradiction. So we have $\<\b^\vee, \g\>=-1$ and $s_\g(\b)=\b+\g > \b$, which contradicts our choice of $\b$. The proof is finished.
\end{proof}

\subsection{} \label{chase} Let $\chi, \chi' \in Y$ and $\g \in \SS_0$. Write $\chi \overset \g \to_+ \chi'$ if $\chi'=\chi+\g$ and $\<\chi, \g\> \le -1$. If, moreover, $\<\chi, \g\> = -1$, we write $\chi \overset \g \rightarrowtail_+ \chi'$. We write $\chi \to_+ \chi'$ (resp. $\chi \rightarrowtail_+ \chi'$) if there exist simple roots $\g_1, \cdots, \g_m$ such that $\chi \overset {\g_1} \to_+ \cdots \overset {\g_m} \to_+ \chi'$ (resp. $\chi \overset {\g_1} \rightarrowtail_+ \cdots \overset {\g_m} \rightarrowtail_+ \chi'$ ). On the other hand, write $\chi \overset \g \to_- \chi'$ if $\chi'=\chi-\g$ and $\<\chi, \g\> \ge 1$. We can define $\chi \to_- \chi'$ in a similar way.

We say $\chi \in Y$ is {\it weakly dominant} if $\<\chi, \a\> \ge -1$ for each $\a \in \Phi^+$.
\begin{lem} \label{ind}
Let $\chi, \chi' \in Y$ and $\l \in Y^+$. Then we have

(1) $\chi' \le \l$ if $\chi \le \l$ and $\chi \to_+ \chi'$;

(2) $\chi$ is weakly dominant if $\chi \rightarrowtail_+ \chi'$ and $\chi'$ is dominant;

(3) $\chi \preceq \l$ if $\chi \le \l$ and $\chi$ is weakly dominant;

(4) $\chi'$ is weakly dominant if $\chi \to_- \chi'$ and $\chi$ is weakly dominant.

In particular, $\chi' \preceq \l$ if $\chi \to_+ \chi'$, $\chi' \le \l$ and $\chi'$ is weakly dominant.
\end{lem}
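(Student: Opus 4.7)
The plan is to handle the four parts separately: (1) and (3) reduce to a Cartan-matrix estimate, while (2) and (4) proceed by induction along the chain. For (1), I write $\l - \chi = \sum_{\b \in \SS_0} c_\b \b^\vee$ with $c_\b \in \ZZ_{\ge 0}$, so that it suffices to show $c_\g \ge 1$; then $\l - \chi' = (c_\g - 1)\g^\vee + \sum_{\b \ne \g} c_\b \b^\vee$ remains a non-negative combination of simple coroots. Pairing with $\g$, the dominance of $\l$ and the hypothesis $\<\chi, \g\> \le -1$ give $\<\l - \chi, \g\> \ge 1$; since the off-diagonal Cartan entries $\<\b^\vee, \g\>$ for $\b \ne \g$ are non-positive, this forces $2 c_\g \ge 1$, hence $c_\g \ge 1$.

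For (2), I perform downward induction along the chain $\chi_0 \overset{\g_1}\rightarrowtail_+ \cdots \overset{\g_m}\rightarrowtail_+ \chi_m = \chi'$, using that $\<\chi_i, \g_i\> = 1$ is forced by the definition of $\rightarrowtail_+$. The inductive step reads: given $\chi_i$ weakly dominant with $\<\chi_i, \g_i\> = 1$, show $\chi_{i-1} = \chi_i - \g_i^\vee$ is weakly dominant. The cases $\a = \g_i$ and $\<\g_i^\vee, \a\> \le 0$ are immediate; for $\a \in \Phi^+ \setminus \{\g_i\}$ with $c := \<\g_i^\vee, \a\> \ge 1$, the reflected root $s_{\g_i}(\a) = \a - c\g_i$ is still positive, so weak dominance applied there, combined with $\<\chi_i, \g_i\> = 1$, yields $\<\chi_i, \a\> \ge c - 1$, hence $\<\chi_{i-1}, \a\> \ge -1$. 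Part (4) follows by a symmetric forward induction: the case $c \ge 1$ uses $\<\chi_{m-1}, \g_m\> \ge 1$ together with weak dominance of $\chi_{m-1}$ at $\a - c\g_m \in \Phi^+$.

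For (3), I iterate simple reflections. If $\chi$ is not yet dominant, weak dominance and integrality of $\<\chi, \g\>$ produce some $\g \in \SS_0$ with $\<\chi, \g\> = -1$, and I replace $\chi$ by $s_\g \chi = \chi + \g^\vee$. This preserves weak dominance, since $s_\g$ permutes $\Phi^+ \setminus \{\g\}$ and so $\<s_\g \chi, \a\> = \<\chi, s_\g(\a)\> \ge -1$ for such $\a$, while $\<s_\g \chi, \g\> = 1$; and it preserves $\le \l$ by the Cartan computation of (1). The sequence stays inside the finite Weyl orbit of $\chi$ and strictly increases in $\le$, so it must terminate at the unique dominant member $\bar\chi$ of this orbit, whence $\bar\chi \le \l$, i.e., $\chi \preceq \l$. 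The ``In particular'' follows by combining (1), iterated along the chain, with (3) applied to $\chi'$. The main obstacle I anticipate is in (2) and (4), where the Cartan pairing $c = \<\g^\vee, \a\>$ can be as large as $3$ in non-simply-laced types; the unifying device is the identity $s_\g(\a) = \a - c\g \in \Phi^+$ for $\a \in \Phi^+ \setminus \{\g\}$, which reduces each case to a single weak-dominance estimate.
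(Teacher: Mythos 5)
Your proof is correct, and it takes a genuinely more self-contained route than the paper's. The paper disposes of parts (2) and (4) by citing \cite[Lemma 6.6]{N2}, and of part (3) by citing \cite[Proposition 2.2]{Ga}, only giving the explicit argument for part (1) (where your Cartan-matrix computation is essentially the same as the paper's, just phrased directly rather than by contradiction). You replace the references with elementary arguments, and the key device you identify --- that a simple reflection $s_\g$ permutes $\Phi^+ \setminus \{\g\}$, so weak dominance of $\chi_i$ evaluated at $s_\g(\a) = \a - c\g$ supplies exactly the estimate $\<\chi_i,\a\> \ge c\<\chi_i,\g\> - 1$ needed to handle the case $c = \<\g^\vee,\a\> \ge 1$ --- is correct in all types, including when $c = 2$ or $3$. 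Your proof of (3) by iterated ``straightening'' with $s_\g$ for a simple $\g$ with $\<\chi,\g\> = -1$ is also sound: each step preserves weak dominance, preserves $\le \l$ by part (1), stays in the finite $W_0$-orbit, and strictly increases $\chi$ under $\le$, so it must terminate at the unique dominant element $\bar\chi$. One very small remark: for the final ``In particular'' clause as literally stated, part (3) applied to $\chi'$ already suffices; the clause is evidently intended (and is used in the paper) with the hypothesis $\chi \le \l$ rather than $\chi' \le \l$, and your reading --- iterate (1) along the chain, then apply (3) --- matches that intended use.
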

\begin{proof}
(3) is \cite[Proposition 2.2]{Ga}. (2) and (4) follow from \cite[Lemma 6.6]{N2}. (1) is proved in \cite{Ga}, and we repeat its proof here. We can assume $\chi \overset \g \to_+ \chi'$ for some simple root $\g \in \SS_0$. To verify $\chi'=\chi+\g^\vee \le \l$, we prove the coefficient of $\g^\vee$ in $\l-\chi$ is strictly positive. Assume otherwise, then $0 \ge \<\l-\chi, \g\> \ge 0 - (-1) \ge 1$, a contradiction. So (1) is proved.
\end{proof}

For $v \in \RR \Phi$ we have $v=\sum_{\a \in \SS_0} c_\a \a$, where $c_\a \in \RR$ is called the coefficient of $\a$ in $v$. We say $\g \in \Phi^+$ is {\it elementary} if the coefficient of each simple root in $\g$ is at most one.
\begin{lem} \label{elementary}
Suppose there exists a simple root $\a \in \SS_0$ having three neighbors in the Dynkin diagram of $\SS_0$. Let $\g \in \Phi^+$ such that the coefficient of $\a$ in $\g$ is at most one. Then $\g$ is elementary.
\end{lem}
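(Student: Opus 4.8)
The plan is to argue by contradiction using the structure of the Dynkin diagram around the trivalent node $\a$. Suppose $\g \in \Phi^+$ has coefficient of $\a$ at most one, but $\g$ is not elementary, so some simple root $\b$ occurs in $\g$ with coefficient $\ge 2$. The key structural fact I would exploit is that removing $\a$ from the Dynkin diagram of $\SS_0$ disconnects it into (at least) three connected components, each of which is the diagram of a root subsystem of smaller rank; write $\SS_0 \setminus \{\a\} = I_1 \sqcup I_2 \sqcup I_3 \sqcup \cdots$ accordingly, with $\a$ attached to exactly one node in each of $I_1, I_2, I_3$. Since the coefficient of $\a$ in $\g$ is $0$ or $1$, I would first reduce to the case it is exactly $1$: if it were $0$, then $\g$ lies in the root subsystem generated by $\SS_0 \setminus \{\a\}$, hence in one of the pieces $\Phi_{I_j}$ (its support is connected), and each such piece, having only nodes of valence $\le 2$ in its own diagram except possibly inherited ones — actually here one must be slightly careful — but in any case one is reduced to a root system whose diagram is a proper subdiagram, and the offending coefficient-$\ge 2$ behavior is controlled; more cleanly, if the $\a$-coefficient is $0$ then $\g$ is a root of a system of type A (a path) in each relevant component unless that component itself contains a trivalent node, which would have to be one of the original trivalent nodes — and the hypothesis singles out $\a$. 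The cleanest route is: assume the $\a$-coefficient is $1$.

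Next, with the $\a$-coefficient equal to $1$, I would consider the support $\supp(\g) \subseteq \SS_0$, which is connected (supports of roots are always connected), so $\supp(\g) \setminus \{\a\}$ meets the components $I_1, I_2, I_3$ in a collection of subpaths/subtrees hanging off $\a$. Let $\b$ be a node where $\g$ has coefficient $\ge 2$; $\b$ lies in one of the components, say $I_1$. Now I would restrict attention to the connected subdiagram $\D$ spanned by $\{\a\} \cup (\supp(\g) \cap I_1)$ and look at $\g' := $ the projection of $\g$ to the span of $\D$, i.e. drop the coordinates outside $I_1 \cup \{\a\}$. The pairing $\<\g, \a\>$ can be computed: since $\a$ is connected to at most one node in each of $I_1, I_2, I_3$, and $\g$ has $\a$-coefficient $1$, one gets $\<\g,\a\> = 2 - (\text{contributions from the three neighbors of } \a)$, each contribution being a nonnegative integer coming from the coefficients of those neighbor nodes in $\g$ (times the appropriate Cartan entry, which is $\le 0$). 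If $\g$ has coefficient $\ge 1$ at the neighbor of $\a$ in each of $I_1, I_2, I_3$ — which must happen by connectedness of $\supp(\g)$ if $\supp(\g)$ reaches into a component at all, and it must reach into $I_1$ since $\b \in I_1$ — the arithmetic forces $\<\g,\a\> \le 2 - 1 - 1 - (\text{coeff at neighbor in } I_1) \le 0$, and more precisely if the neighbor of $\a$ in $I_1$ already has coefficient $\ge 2$ this is even more negative; one then plays off $s_\a \g = \g - \<\g,\a\>\a$ versus the requirement that this remain a root with $\a$-coefficient $1 - \<\g,\a\> \ge 2$, contradicting the hypothesis applied to $s_\a\g$ — or rather, one iterates to build a root of larger and larger $\a$-coefficient, contradicting finiteness of $\Phi$. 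Because there are very few root systems with a trivalent node ($D_n$, $E_6$, $E_7$, $E_8$), the truly robust finish is case-by-case: list the highest root and inspect which simple roots can have coefficient $\ge 2$ while $\a$ (the branch node) has coefficient $1$; in each case the branch node of these systems is the unique node whose coefficient in the highest root equals the sum of its neighbors', and one checks directly that coefficient-$1$-at-$\a$ confines $\g$ to a subsystem of type $A$, hence $\g$ is elementary.

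The main obstacle I expect is the reduction step handling the case where $\g$ has $\a$-coefficient $0$: there $\g$ lives in one component $I_j$ of $\SS_0 \setminus \{\a\}$, whose diagram may itself still be a path (type A), in which case every root is elementary and we are done, but if $I_j$ still contains a trivalent node (e.g. in $E_8$ with $\a$ not the branch node — but here $\a$ is assumed to be the branch node of $\SS_0$) one would need to recurse. Since the hypothesis fixes $\a$ as a node with exactly three neighbors and the only irreducible diagrams with such a node are $D_n$, $E_6, E_7, E_8$, removing $\a$ always yields a disjoint union of type-$A$ diagrams, so the $\a$-coefficient-$0$ case is immediate: $\g$ is a root of a type-$A$ subsystem, hence elementary. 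Thus the real content is the $\a$-coefficient-$1$ case, and the cleanest rigorous treatment there is the explicit case-by-case inspection of the at most four relevant Dynkin diagrams, which is exactly the style the paper adopts elsewhere.
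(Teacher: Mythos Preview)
The paper states Lemma~\ref{elementary} without proof, so there is no argument to compare against directly. Your plan is correct in its essentials: the only irreducible simply laced diagrams with a trivalent node are $D_n$, $E_6$, $E_7$, $E_8$; the trivalent node $\a$ is unique in each; and removing it leaves a disjoint union of type-$A$ diagrams, which settles the case $c_\a=0$ immediately since every root in a type-$A$ system is elementary. For the case $c_\a=1$ your fallback to an explicit inspection of the four relevant types is valid and entirely in keeping with the paper's style elsewhere.

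One genuine wrinkle in your write-up: the reflection step you sketch is not a contradiction as stated. You write that $s_\a\g$ has $\a$-coefficient $1-\langle\g,\a^\vee\rangle\ge 2$, ``contradicting the hypothesis applied to $s_\a\g$''. But the lemma only draws a conclusion for roots whose $\a$-coefficient is at most one; producing a root with $\a$-coefficient $\ge 2$ contradicts nothing. You seem to sense this and retreat to the case-by-case check, which is the right call. If you want a uniform argument for $c_\a=1$, the clean route is to use $\langle\g,\b^\vee\rangle\in\{-1,0,1\}$ for each simple $\b$ and propagate the resulting inequalities outward from $\a$ along each of the three type-$A$ branches; this forces all coefficients on each branch to be $\le 1$. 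But the direct inspection is shorter and suffices.
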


For $\a, \b \in \SS_0$, we denote by $\dist(\a, \b)$ the length of the shortest path (geodesic) connecting $\a$ and $\b$ in the Dynkin diagram of $\SS_0$. For $D, D' \subseteq \SS_0$ we set $\dist(D, D')=\min \{\dist(\g, \g'); \g \in D, \g' \in D'\}$.

\section{Proof of the main result} \label{proof-main}
In this section we outline the proof of Theorem \ref{intro}. We fix $\l \in Y^+$ and $b \in G(L)$ such that $(\l, b)$ is Hodge-Newton irreducible, that is, $\eta_G(t^\l)=\eta_G(b)$ and the coefficient of each simple coroot in $\l-\nu_{[b]}^G$ is strictly positive. In particular, $X(\l, b) \neq \emptyset$.

Following \cite{V1}, we say $x \in \tW$ is {\it short} if $\nu_x$ is dominant and $x \in \Omega_{J_{\nu_x}}$.
\begin{lem} [\cite{V1}] \label{dom}
There exists a unique short element $\tw$ such that $\dot \tw \in [b]$. Moreover, $\tw \in t^\mu W_0$ with $\mu \preceq \l$ weakly dominant.
\end{lem}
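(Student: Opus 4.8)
The plan is to produce the short element $\tw$ directly from the Newton point and Kottwitz point of $[b]$, and then argue uniqueness. First I would set $\nu = \nu_{[b]}^G \in Y_\RR^+$ and let $J = J_\nu = \{s \in \SS_0; s(\nu)=\nu\}$, so that $\nu$ lies in the interior of the dominant chamber of the Levi $M_J$. The element we seek must have Newton vector $\nu$ and lie in $\Omega_J$; since $\Omega_J \cong \tW_{M_J}/W^a_{M_J} \cong \pi_1(M_J)$, and the translation part of any element of $\Omega_J$ is determined modulo $\ZZ\Phi_J^\vee$, I would use the Kottwitz invariant to pin it down: $\eta_{M_J}$ gives a well-defined class in $\pi_1(M_J)$, and $[b]$ (more precisely its reduction to a $\s$-conjugacy class in $M_J(L)$ whose Newton point is central in $M_J$) determines such a class. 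Concretely, $[b]$ contains $\dot x$ for some $x \in \tW$ with $\bar\nu_x = \nu$; conjugating within its $\s$-conjugacy class I can move $x$ into $\tW$ and then, because $\nu_x$ is (after conjugation) dominant and fixes exactly $J$, arrange $x \in \Omega_J$. This gives existence.

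For uniqueness: if $\tw, \tw'$ are both short with $\dot\tw, \dot\tw' \in [b]$, then $\bar\nu_{\tw} = \bar\nu_{\tw'} = \nu$ and, since both are short, $\nu_{\tw}=\nu_{\tw'}=\nu$ (the Newton vector of a short element is already dominant, hence equals its own dominant conjugate). So $\tw,\tw' \in \Omega_{J}$ with the same $\pi_1(G)$-image (as $\eta_G$ is constant on $[b]$). Now $\dot\tw$ and $\dot\tw'$ are $\s$-conjugate in $G(L)$, both fundamental/straight (elements of $\Omega_J$ with central-in-$M_J$ Newton point are straight), and a straight element of $\Omega_J$ is determined within $\tW_{M_J}$ by its $M_J$-$\s$-conjugacy class together with the fact that it normalizes the base alcove of $M_J$; I would invoke the classification of straight $\s$-conjugacy classes (cf. \cite{HN}, and the analysis in \cite{V1}) to conclude $\tw = \tw'$.

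Finally, to get $\tw \in t^\mu W_0$ with $\mu \preceq \l$ weakly dominant: writing $\tw = t^\mu w$ with $w \in W_0$, the vector $\nu = \bar\nu_{\tw}$ is obtained by averaging $\mu$ over the cyclic action of $w\s$, so $\bar\mu \succeq \nu$ in the usual sense is not automatic — rather I would use Hodge-Newton irreducibility: $\eta_G(t^\l)=\eta_G(b)=\eta_G(t^\mu)$ forces $\mu \equiv \l \bmod \ZZ\Phi^\vee$, and the nonemptiness condition $\l - \nu \in \RR_{\ge 0}(\Phi^+)^\vee$ combined with a standard convexity argument (as in \cite{V1}, \cite{N2}) yields $\bar\mu \le \l$, i.e. $\mu \preceq \l$. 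Weak dominance of $\mu$ follows because $\tw \in \Omega_J$ fixes the base alcove $\ba$ of $M_J$ up to length zero, which constrains $\<\mu, \a\>$ to lie in $\{0, -1, \ldots\}$-type ranges for $\a \in \Phi^+$; more carefully, I would show $\<\mu,\a\> \ge -1$ for all $\a \in \Phi^+$ by using that $\tw(\ba_{M_J}) = \ba_{M_J}$ together with $\mu$ being the translation part.

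The main obstacle I anticipate is the uniqueness statement together with the precise extraction of $\mu$: reducing the $\s$-conjugacy class $[b]$ to a class with central (in $M_J$) Newton point and showing the resulting element of $\Omega_J$ is canonical requires care about basepoints and lifts in $N(L)$, and the passage from "short" to "straight" (so that Proposition \ref{fund}-type rigidity applies) must be justified cleanly. The verification $\mu \preceq \l$ and weak dominance of $\mu$, while conceptually standard, hinges on correctly combining the two Kottwitz invariants with the admissibility/convexity inequalities, and I would expect this to be the most delicate bookkeeping in the argument.
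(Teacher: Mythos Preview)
Your overall strategy---find a representative of $[b]$ in $\tW$, conjugate it into $\Omega_{J_\nu}$, and then read off the properties of $\mu$---parallels the paper's, and for uniqueness both you and the paper ultimately appeal to \cite{V1} (the paper cites \cite[Lemma 5.3]{V1} directly). The paper also outsources shortness and weak dominance of $\mu$ to \cite[Lemma 4.1]{N2}, so your plan to argue these from $\tw(\ba_{M_J})=\ba_{M_J}$ is in the right spirit, if not fully fleshed out.

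The real gap is your derivation of $\mu \preceq \l$. You propose to combine $\mu \equiv \l \bmod \ZZ\Phi^\vee$ with the Newton inequality $\l - \nu \in \RR_{\ge 0}(\Phi^+)^\vee$ via ``a standard convexity argument.'' But convexity goes the wrong way: since $\mu$ is $J$-dominant and $\mu-\nu \in \RR\Phi_J^\vee$, one has $\mu - \nu \in \RR_{\ge 0}(\Phi_J^+)^\vee$, so $\mu$ sits \emph{above} $\nu$, and nothing then forces $\bar\mu \le \l$. What is actually required is the nontrivial input that some straight representative of $[b]$ lies in $\Adm(\l)$---this is the content of Proposition~\ref{point} (ultimately \cite{He2}). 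The paper's proof builds this in from the outset: it takes a straight $x \in \Adm(\l)$ with $[\dot x]=[b]$ from Proposition~\ref{point}, sets $\tw = z^{-1} x z$ for suitable $z \in W_0^{J_{\bar\nu_x}}$, and then $\mu \preceq \l$ drops out immediately from $x \in \Adm(\l)$ together with $W_0 x W_0 = W_0 \tw W_0$. Without invoking Proposition~\ref{point} or an equivalent, your route to $\mu \preceq \l$ does not close.
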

\begin{proof}
By Proposition \ref{point}, there exists a straight element $x \in \Adm(\l)$ such that $[b]=[\dot x]$. In particular, $\bar \nu_x=\nu_{[b]}^G$. Let $z \in W_0^{J_{\bar \nu_x}}$ such that $\nu_x=z(\nu_{[b]}^G)$. We set $\tw =z\i x z$ and hence $\dot \tw \in [\dot x]=[b]$. By \cite[Lemma 4.1]{N2}, $\tw \in \Omega_{J_{\nu_{\tw}}}$ and $\mu$ is weakly dominant. Since $x \in \Adm(\l)$ and $W_0 x W_0 = W_0 \tw W_0$, we have $\mu \preceq \l$ as desired. The uniqueness of $\tw$ follows from \cite[Lemma 5.3]{V1}.
\end{proof}

Let $\tw$ be the short element with $\dot \tw \in [b]$. Let $\mu \in Y$ such that $\tw \in t^\mu W_0$. Then $\mu$ is $J_{\nu_{\tw}}$-dominant and $J_{\nu_{\tw}}$-minuscule. Let $J \subseteq J_{\nu_{\tw}}$ be the union of connected components $H$ of $J_{\nu_{\tw}}$ such that $\mu$ is noncentral on $\Phi_H$. Let $K=\{s \in J; s(\mu)=\mu\}$. Then $x=t^\mu w_K w_J$, where $w_K$ and $w_J$ are the unique longest elements of $W_K$ and $W_J$ respectively.
\begin{lem}\label{add-simple}
We have $\mu+\a^\vee \le \l$ for $\a \in \SS_0 - J$.
\end{lem}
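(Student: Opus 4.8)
The plan is to reduce the inequality $\mu+\a^\vee\le\l$ to two observations: first, that the a priori weaker relation $\mu\preceq\l$ supplied by Lemma~\ref{dom} can be upgraded to $\mu\le\l$; second, that for $\a\in\SS_0-J$ the coefficient of $\a^\vee$ in $\l-\mu$ is already at least $1$, so that subtracting one further copy of $\a^\vee$ does no harm.

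For the first observation I would show that $\mu\le\bar\mu$ whenever $\mu$ is weakly dominant. Indeed, if $\mu$ is not dominant, pick $\a\in\SS_0$ with $\<\mu,\a\><0$; weak dominance forces $\<\mu,\a\>=-1$, hence $s_\a(\mu)=\mu+\a^\vee\ge\mu$, and one checks directly that $s_\a(\mu)$ is again weakly dominant and that $|\{\b\in\Phi^+;\<\mu,\b\><0\}|$ strictly decreases (because $s_\a$ permutes $\Phi^+-\{\a\}$). Iterating and using $\overline{s_\a(\mu)}=\bar\mu$ gives $\mu\le\bar\mu$; since $\l$ is dominant, $\mu\preceq\l$ means exactly $\bar\mu\le\l$, so $\mu\le\l$, and in particular $\l-\mu\in\ZZ_{\ge0}(\Phi^+)^\vee$.

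For the second observation I would compare $\mu$ with the Newton point. Since $\tw$ is short, $\nu_{\tw}$ is dominant, so $\nu_{\tw}=\nu_{[b]}^G$, and Hodge-Newton irreducibility says the coefficient of every simple coroot in $\l-\nu_{\tw}$ is strictly positive. Writing $\tw=t^\mu w$ with $w\in W_{J_{\nu_{\tw}}}$ (possible because $\tw\in\Omega_{J_{\nu_{\tw}}}$) and letting $d$ be the order of $w$, we have $\nu_{\tw}=\tfrac1d\sum_{i=0}^{d-1}w^i(\mu)$. Decomposing $J_{\nu_{\tw}}$ into its connected components $H$ and $w=\prod_H w_H$ with $w_H\in W_H$, the mutual orthogonality of distinct components (they are pairwise non-adjacent in the Dynkin diagram) gives $\mu-w^i(\mu)=\sum_H\bigl(\mu_H-w_H^i(\mu_H)\bigr)$, where $\mu_H$ is the orthogonal projection of $\mu$ onto $\RR\Phi_H^\vee$; the summands with $\mu_H=0$ — that is, the components $H$ on which $\mu$ is central, which are precisely those not making up $J$ — drop out. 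Hence $\mu-\nu_{\tw}\in\QQ\Phi_J^\vee$, so the coefficient of $\a^\vee$ in $\mu-\nu_{\tw}$ vanishes for every $\a\in\SS_0-J$.

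Combining the two: for $\a\in\SS_0-J$ the coefficient of $\a^\vee$ in $\l-\mu=(\l-\nu_{\tw})-(\mu-\nu_{\tw})$ equals its coefficient in $\l-\nu_{\tw}$, which is positive; but by the first observation $\l-\mu\in\ZZ_{\ge0}(\Phi^+)^\vee$, so this coefficient is a nonnegative integer, hence $\ge1$. Subtracting $\a^\vee$ therefore leaves all simple-coroot coefficients nonnegative, i.e. $\l-\mu-\a^\vee\in\ZZ_{\ge0}(\Phi^+)^\vee$, which is exactly $\mu+\a^\vee\le\l$. I expect the one genuinely delicate point to be the orthogonality bookkeeping in the third paragraph, which is what pins the support of $\mu-\nu_{\tw}$ down to $J$ rather than merely to the larger set $J_{\nu_{\tw}}$; everything else is routine.
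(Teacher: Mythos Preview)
Your proof is correct and follows the same route as the paper's: both use Hodge--Newton irreducibility together with $\nu_{\tw}-\mu\in\RR\Phi_J^\vee$ to see that the $\a^\vee$-coefficient of $\l-\mu\in\ZZ\Phi^\vee$ is a positive integer. You are in fact more explicit than the paper in first upgrading $\mu\preceq\l$ to $\mu\le\l$ via weak dominance (a step the paper's proof passes over in silence), whereas your orthogonality bookkeeping for $\mu-\nu_{\tw}\in\QQ\Phi_J^\vee$ can be shortened to the one-line observation $\tw=t^\mu w_Kw_J\in\tW_J$ recorded just before the lemma.
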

\begin{proof}
By definition, the coefficient of $\a$ in $\l-\nu_{[b]}^G$ is strictly positive. On the other hand, $\nu_{[b]}^G=\nu_{\tw} \in \mu + \RR \Phi_J^\vee$ since $\tw \in \tW_J$. Thus the coefficient of $\a$ in $\l-\mu \in \ZZ \Phi^\vee$ is also strictly positive as desired.
\end{proof}

\begin{lem} \label{positive}
If $\g \in \Phi^+ - \Phi_{J_{\nu_{\tw}}}$ is $J$-dominant, then $\<\mu, \g\> \ge 1$.
\end{lem}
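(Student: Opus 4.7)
The plan is to sandwich the integer $\<\mu,\g\>$ between the strictly positive rational number $\<\nu_{\tw},\g\>$ and its own integer floor. First, since $\nu_{\tw}$ is dominant, the subsystem $\Phi_{J_{\nu_{\tw}}}$ coincides with the annihilator $\{\a\in\Phi:\<\nu_{\tw},\a\>=0\}$; hence for $\g\in\Phi^+\setminus\Phi_{J_{\nu_{\tw}}}$ the positivity of $\g$ together with the dominance of $\nu_{\tw}$ force $\<\nu_{\tw},\g\>>0$.

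Next I would express $\nu_{\tw}$ as an average of $w$-translates of $\mu$. Writing $\tw=t^\mu w$ with $w=w_K w_J\in W_J$ and letting $n$ be the order of $w$, the identity $\tw^n=t^{\sum_{k=0}^{n-1}w^k(\mu)}$ gives
$$\nu_{\tw}=\frac{1}{n}\sum_{k=0}^{n-1}w^k(\mu),$$
so pairing against $\g$ yields
$$n\,\<\nu_{\tw},\g\>=\sum_{k=0}^{n-1}\<\mu,w^{-k}(\g)\>.$$

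The final step is a $W_J$-dominance comparison. Since $\g$ is $J$-dominant and each $w^{-k}$ lies in $W_J$, the root $\g$ is the unique maximum of its $W_J$-orbit in the $\le_J$-order, so $\g-w^{-k}(\g)\in\ZZ_{\ge 0}\,J$. Because $J\subseteq J_{\nu_{\tw}}$ and $\mu$ is $J_{\nu_{\tw}}$-dominant, $\<\mu,\a\>\ge 0$ for every simple root $\a\in J$; therefore $\<\mu,w^{-k}(\g)\>\le\<\mu,\g\>$ for every $k$. Averaging over $k$ gives $\<\mu,\g\>\ge\<\nu_{\tw},\g\>>0$, and since $\<\mu,\g\>\in\ZZ$ we conclude $\<\mu,\g\>\ge 1$. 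I do not expect a substantial obstacle here: the three ingredients (identification of $\Phi_{J_{\nu_{\tw}}}$ as the annihilator of $\nu_{\tw}$, the averaging formula for $\nu_{\tw}$, and the matching dominance properties of $\mu$ and $\g$) combine directly without any case analysis.
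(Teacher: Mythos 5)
Your proof is correct and arrives at the same key inequality as the paper, namely $\<\mu,\g\> \ge \<\nu_{\tw},\g\> > 0$, but establishes it by a genuinely different decomposition. The paper works on the cocharacter side: it observes directly that $\mu-\nu_{\tw}$ lies in $\RR\Phi_J^\vee$ and is $J$-dominant (because $\nu_{\tw}$ is $W_J$-fixed), hence $\mu-\nu_{\tw}\in\RR_{\ge 0}(\Phi_J^+)^\vee$, and then pairs this against the $J$-dominant root $\g$. You instead work on the character side: you invoke the averaging formula $\nu_{\tw}=\frac{1}{n}\sum_{k}w^k(\mu)$ and use that $\g-w^{-k}(\g)$ is a nonnegative integer combination of simple roots in $J$ for each $w^{-k}\in W_J$, together with $J$-dominance of $\mu$, to bound each summand $\<\mu,w^{-k}(\g)\>$ by $\<\mu,\g\>$. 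The two are essentially dual manifestations of the same dominance principle; the paper's version is a one-line observation and avoids unpacking $\nu_{\tw}$, while yours makes the $W_J$-averaging structure explicit, which is perhaps more self-contained if one does not want to separately justify that a $J$-dominant element of $\RR\Phi_J^\vee$ lies in the nonnegative coroot cone (a fact hinging on the nonnegativity of the inverse Cartan matrix).
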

\begin{proof}
By definition, $\mu-\nu_{\tw}=\mu-\nu_{[b]}^G \in \RR \Phi_J^\vee$ is $J$-dominant. So $\mu-\nu_{\tw} \in \RR_{\ge 0} (\Phi_J^+)^\vee$. Since $\g$ is $J$-dominant, we have $\<\mu, \g\>=\<\nu_{\tw}, \g\>+\<\mu-\nu_{\tw}, \g\> \ge \<\nu_{\tw}, \g\> > 0$ as desired.
\end{proof}

For $\chi \in Y$ we denote by $\chi_J$ (resp. $\chi^J$) the unique $J$-antidominant (resp. $J$-dominant) $W_J$-conjugate of $\chi$.
\begin{cor} \label{f4}
Let $z \in W_0^J$ and $\g \in \Phi^+ - \Phi_J$ such that $z \geq z s_\g \in W_0^J$ (resp. $z \leq z s_\g \in W_0^J$). Then $z \tw s_\g z\i, z s_\g \tw z\i \in \Adm(\l)$ if $z \tw s_\g z\i \in \Adm(\l)$ (resp. $z s_\g \tw z\i \in \Adm(\l)$). Moreover, the latter condition holds if $\mu+\g_J^\vee \preceq \l$.
\end{cor}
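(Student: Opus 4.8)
The plan is to reduce everything to the two-sided comparison in Lemma \ref{compare} together with the explicit admissibility bounds in Lemma \ref{f2}. First consider the case $z \geq z s_\g$ with $z s_\g \in W_0^J$. Set $\a = -z(\g) \in \Phi^+$; since $z \in W_0^J$ and $\g \in \Phi^+ - \Phi_J$, the hypothesis $z s_\g < z$ forces $z(\g) \in \Phi^-$, so $\a \in \Phi^+$, and one checks that $s_\a z = z s_\g \in W_0^J$ and $s_\a z s_\g = z$, so both $z$ and $s_\a z$ lie in $\tW^{\SS^a_{M_J}}$. Writing $\tw = t^\mu w_K w_J$ (with $\tw \in \Omega_J$), apply Lemma \ref{compare} twice: from $\tw \leq_J t^{u(\mu)}$ for a suitable $u \in W_J$ we get $z \tw z\i \leq t^{z u(\mu)}$ and $z s_\g \tw z\i = s_\a z \tw z\i s_\g$-type conjugates are sandwiched between translations and their $s_\a$-shifts exactly as in the proof of Lemma \ref{f2}. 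Concretely, $z \tw s_\g z\i = (z \tw z\i) s_\a$ and $z s_\g \tw z\i = s_\a (z \tw z\i)$, so Lemma \ref{f2}(1),(2) (applied with the roles of $z$ and $s_\a z$) give
\[
z \tw s_\g z\i \in \Adm(\mu - u\i z\i(\a^\vee)) \cup \Adm(\mu), \qquad
z s_\g \tw z\i \in \Adm(\mu + u\i z\i(\a^\vee)) \cup \Adm(\mu).
\]
Since $u\i z\i(\a^\vee) = -u\i z\i(z(\g^\vee)) = -u\i(\g^\vee)$ and $u \in W_J$ permutes $\Phi_J^\vee$, we have $\mu + u\i z\i(\a^\vee) = \mu - u\i(\g^\vee) \preceq \mu + \g_J^\vee$, where $\g_J^\vee$ is the $J$-antidominant conjugate; here I would use that $-u\i(\g^\vee)$ and $\g_J^\vee$ are $W_J$-conjugate so their dominant representatives agree.

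Now for the implication: suppose $z \tw s_\g z\i \in \Adm(\l)$. Because $z \geq z s_\g$, the element $z s_\g \tw z\i$ is Bruhat-below $z \tw s_\g z\i$ — this is where I would invoke Lemma \ref{compare} once more, or rather the observation that left and right multiplication by the same reflection on comparable Coxeter elements preserves order in the relevant direction — hence $z s_\g \tw z\i \in \Adm(\l)$ too, since $\Adm(\l)$ is closed under the Bruhat order. The symmetric case $z \leq z s_\g$ is handled by interchanging the roles of the two conjugates. Finally, for the last sentence: by the displayed containment, $z s_\g \tw z\i \in \Adm(\mu + \g_J^\vee) \cup \Adm(\mu)$ (after the $W_J$-conjugacy identification above), and $\Adm(\mu) \subseteq \Adm(\l)$ because $\mu \preceq \l$ by Lemma \ref{dom}, while $\Adm(\mu + \g_J^\vee) \subseteq \Adm(\l)$ follows from the hypothesis $\mu + \g_J^\vee \preceq \l$ together with the inclusion $\Adm(\l') \subseteq \Adm(\l)$ for $\l' \preceq \l$ recalled in \S\ref{admissible}. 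Chaining through the first part then gives both $z \tw s_\g z\i$ and $z s_\g \tw z\i$ in $\Adm(\l)$.

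The main obstacle I anticipate is bookkeeping the passage between the two normalizations: Lemma \ref{f2} is stated for $\tw \in \Omega_J$ and produces bounds involving $u\i z\i(\a^\vee)$ for $u \in W_J$, whereas the corollary wants the clean bound $\mu + \g_J^\vee$. Getting the identification $\{u\i z\i(\a^\vee) : u \in W_J\} = W_J \cdot \g^\vee$ right — and hence that the $\preceq$-largest among the $\Adm$-targets is $\Adm(\mu + \g_J^\vee)$ — requires care with signs (the $s_\g$ on the right versus left, and the fact that $z(\g)$ negative is exactly the condition $z \geq z s_\g$). The Bruhat-closure step for "$z\tw s_\g z\i \in \Adm(\l) \Rightarrow z s_\g \tw z\i \in \Adm(\l)$" should be routine once one notes $\ell(z s_\g \tw z\i) \le \ell(z \tw s_\g z\i)$ in this case and that $\Adm(\l)$ is a lower set; the only subtlety is confirming the length inequality goes the right way, which is forced by $z s_\g < z$.
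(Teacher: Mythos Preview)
Your handling of the ``Moreover'' clause via Lemma \ref{f2} is essentially on the right track (modulo some sign and left/right bookkeeping: it is $z\tw s_\g z^{-1}$, not $z s_\g \tw z^{-1}$, that lands in $\Adm(\mu+\g_J^\vee)\cup\Adm(\mu)$ once you choose $u$ with $u^{-1}(\g)=\g_J$). The real gap is in the main implication. You assert that $z s_\g \tw z^{-1}$ is Bruhat-below $z \tw s_\g z^{-1}$, relying on a length inequality ``forced by $z s_\g<z$'' together with $\Adm(\l)$ being a lower set. But a length inequality does not give a Bruhat relation: writing $x=z\tw z^{-1}$ and $\a=-z(\g)$, the two elements in question are $s_\a x$ and $x s_\a$, and there is no general comparison between these in the Bruhat order (already in $S_3$ one finds incomparable pairs). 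Lemma \ref{compare} does not help either, since neither element is obtained from the other by conjugation by something in $\tW^{\SS^a_{M_J}}$.

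The paper does not attempt any such comparison. Instead it splits into two cases. If $\g\in\Phi_{J_{\nu_{\tw}}}^+-\Phi_J$, then $\mu$ is central on the connected component of $J_{\nu_{\tw}}$ containing $\g$ and $w\in W_J$ fixes $\g$, so $\tw$ actually commutes with $s_\g$; hence $z s_\g \tw z^{-1}=z\tw s_\g z^{-1}$ and the hypothesis gives the conclusion trivially. If $\g\in\Phi^+-\Phi_{J_{\nu_{\tw}}}$, one does not use the hypothesis at all: Lemma \ref{positive} gives $\langle\mu,\g^J\rangle\ge 1$, whence $\mu-(\g^J)^\vee\preceq\mu\preceq\l$, and then Lemma \ref{f2}(2) (with $u$ chosen so that $u^{-1}(\g)=\g^J$) puts $z s_\g \tw z^{-1}$ directly into $\Adm(\mu-(\g^J)^\vee)\cup\Adm(\mu)\subseteq\Adm(\l)$. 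The missing ingredient in your argument is precisely this appeal to Lemma \ref{positive}, which is what makes the second case go through without any Bruhat comparison.
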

\begin{proof}
By symmetry, we can assume $z \geq z s_\g$. If $\g \in \Phi_{J_{\nu_{\tw}}}^+ - \Phi_J$, we have $z s_\g \tw z\i = z \tw s_\g z\i \in \Adm(\l)$. Otherwise, by Lemma \ref{positive} we have $\<\mu, \g^J\> \ge 1$ and hence $\mu-(\g^J)^\vee \preceq \mu \preceq \l$. So $z s_\g \tw z\i \in \Adm(\l)$ by Lemma \ref{f2}. The ``Moreover" part follows from Lemma \ref{f2}.
\end{proof}

\begin{lem} \label{shrink}
Let $\g \in \Phi^+ - \Phi_J$ such that $\<\mu, \g_J\>=\<\mu, \g\>=0$. Then $\g \le_J w\i(\g)$.
\end{lem}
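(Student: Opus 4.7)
First I would extract from the hypotheses the structural decomposition $\g-\g_J\in\ZZ_{\ge 0}\Phi_K^+$. The element $\g_J$, being the $J$-antidominant member of $W_J\cdot\g$, satisfies $\g-\g_J\in\ZZ_{\ge 0}\Phi_J^+$ automatically. Pairing with $\mu$ gives $\<\mu,\g-\g_J\>=0$, and because $\mu$ is $J$-dominant and $J$-minuscule with stabilizer $K$ in $J$, the set of positive roots in $\Phi_J^+$ on which $\mu$ vanishes is precisely $\Phi_K^+$. Hence each positive root contributing to the expansion of $\g-\g_J$ must lie in $\Phi_K^+$.

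Next I would analyse $w\i=w_J w_K$ on simple roots of $J$. For simple $\a\in K$, $w_K$ sends $\a$ into $\Phi_K^-$ and then $w_J$ sends the result into $\Phi_J^+$, so $w\i(\a)>0$. For simple $\a\in J\setminus K$, $w_K$ preserves $\Phi_J^+\setminus\Phi_K^+$ (since $W_K$ leaves the $(J\setminus K)$-coefficients of any root unchanged), and $w_J$ then sends the result into $\Phi_J^-$, so $w\i(\a)<0$. Thus the right descent set of $w\i$ is exactly $J\setminus K$. Combined with the first paragraph, writing $\g=\g_J+\sum_{\a\in\Phi_K^+}n_\a\a$ with $n_\a\ge 0$, for every simple $\b\in J\setminus K$ one obtains $\<\g,\b^\vee\>=\<\g_J,\b^\vee\>+\sum n_\a\<\a,\b^\vee\>\le 0$: the first term is $\le 0$ by $J$-antidominance of $\g_J$, and each $\<\a,\b^\vee\>\le 0$ because $\a$ is a nonnegative combination of simple roots of $K$, each distinct from $\b$.

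The core step would be to construct a reduced expression $w\i=s_{\b_1}\cdots s_{\b_r}$ in $W_J$ along which $\g$ rises monotonically in $\le_J$: setting $v_r=\g$ and $v_{i-1}=s_{\b_i}(v_i)$, the aim is to ensure $\<v_i,\b_i^\vee\>\le 0$ at every step. Under this condition each increment $v_{i-1}-v_i=-\<v_i,\b_i^\vee\>\b_i$ lies in $\ZZ_{\ge 0}\Phi_J^+$, and telescoping yields $w\i(\g)-\g=v_0-v_r\in\ZZ_{\ge 0}\Phi_J^+$, which is exactly $\g\le_J w\i(\g)$. By the preceding paragraph, the first step is free: any $\b_r\in J\setminus K$ is a right descent of $w\i$ and already satisfies $\<\g,\b_r^\vee\>\le 0$.

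The hard part will be continuing the greedy construction past the first step: after applying $s_{\b_r}$ one must locate a right descent of $w\i s_{\b_r}$ whose simple root still pairs nonpositively with the updated vector $v_{r-1}$, and similarly at every subsequent stage. I would attempt this by strong induction on $\ell(w\i)$ with a strengthened inductive invariant tracking $v_i$ relative to the unapplied part of $w\i$ (e.g.\ that $v_i\le_J w\i(\g)$ and that a suitably modified $\mu$-pairing condition persists). An alternative worth pursuing is the equivalent inequality $w_K(\g)\le_J w_J(\g)$: applying $w_J$ to $\g\le_J w\i(\g)$ (and noting that $w_J$ reverses $\le_J$ on the $W_J$-orbit, so that $w_J w\i(\g)=w_K(\g)$) produces this reformulation, which combined with the identity $w_J(\g)-w_K(\g)=w_K(w(\g)-\g)$ reduces the problem to a direct comparison of the two longest-element images of $\g$ and is amenable to case-by-case verification across the short list of possible pairs $(J,K)$ arising from $J$-minuscule dominant $\mu$.
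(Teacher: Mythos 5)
Your opening moves are sound and agree with the paper's first step: from $\<\mu,\g\>=\<\mu,\g_J\>=0$ and $J$-dominance of $\mu$ you correctly get $\g-\g_J\in\ZZ_{\ge 0}\Phi_K^+$, and your computation that the right descent set of $w\i=w_Jw_K$ is $J\setminus K$ (together with $\<\g,\b^\vee\>\le 0$ for simple $\b\in J\setminus K$) is also correct. But the proposal stops being a proof at exactly the point you flag: the ``greedy'' construction of a reduced word for $w\i$ along which the pairing stays nonpositive is only verified for the first letter, and nothing in the write-up shows that the process can be continued, let alone all the way to $\ell(w\i)$ steps. There is no inductive invariant supplied, and it is not clear that one of the naive candidates you mention would actually close. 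So as written there is a genuine gap in the central step.

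The paper's own argument sidesteps the reduced-word bookkeeping entirely and is worth internalizing, because it is the piece missing from your sketch. From $\g-\g_J\in\ZZ_{\ge 0}\Phi_K^+$ one writes $\g=u(\g_J)$ with $u\in W_K$. Since $w_Jw_K$ is the maximal element of $W_J^K$ and $w_Ku\in W_K$, the factorization $w_Ju=(w_Jw_K)(w_Ku)$ is length-additive, so $w_Ku\leq w_Ju$ in Bruhat order. Because $\g_J$ is $J$-antidominant, Bruhat comparability transfers to the dominance order on the orbit: $w_K(\g)=w_Ku(\g_J)\le_J w_Ju(\g_J)=w_J(\g)$. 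Applying $w_J$ (which reverses $\le_J$) converts this to $\g\le_J w_Jw_K(\g)=w\i(\g)$. That single Bruhat-order comparison is precisely the ``continuation'' your greedy scheme was groping for. Your final suggestion — reducing to $w_K(\g)\le_J w_J(\g)$ and then doing a case-by-case check over pairs $(J,K)$ — identifies the right reformulation but then goes in the wrong direction: there is no short list of admissible $(J,K)$, and the uniform Bruhat-order argument is both shorter and unconditional.
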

\begin{proof}
By assumption $\g_J \le_K \g$ and hence $\g=u(\g_J)$ for some $u \in W_K$. Since $w_J w_K \in W_J^K$ and $\ell((w_J w_K) (w_K u))=\ell(w_J w_K) + \ell(w_K u)$, we have $w_K u \leq  (w_J w_K) (w_K u)=w_J u$. Moreover, since $\g_J$ is $J$-antidominant, we have $w_K(\g)=w_K u (\g_J) \le_J w_J u (\g_J)=w_J(\g)$, that is, $w\i(\g) \ge_J \g$ as desired.
\end{proof}

Following \cite[Section 8]{N2}, we define a subset $C_{\l, J, b}$ of $\Phi^+$ as follows. If $G$ is of type $G_2$ and $J=\{s_\g\}$ with $\g$ the unique short simple root, we define $$C_{\l, J, b}=\{\text{the unique long simple root}\}.$$ Otherwise, we define $$C_{\l, J, b}=\{\a \in \Phi^+ - \Phi_J; \mu+\a^\vee \preceq \l, \text{ $\a^\vee$ is $J$-minuscule and $J$-anti-dominant}\}.$$

\begin{lem} \label{span} \cite[Lemma 8.1 \& Corollary 8.2]{N2}
The quotient $\ZZ \Phi^\vee / \ZZ \Phi_J^\vee$ is spanned by $\a^\vee$ for $\a \in C_{\l, J, b}$.
\end{lem}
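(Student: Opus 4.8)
Since the statement is quoted as \cite[Lemma 8.1 and Corollary 8.2]{N2}, the plan is only to indicate the shape of the argument. First I would record the trivial reduction: as $J\subseteq\SS_0$, the simple coroots $\{\a^\vee;\a\in\SS_0\}$ form a $\ZZ$-basis of $\ZZ\Phi^\vee$ and $\{\a^\vee;\a\in J\}$ is a basis of $\ZZ\Phi_J^\vee$, so $\ZZ\Phi^\vee/\ZZ\Phi_J^\vee$ is free with basis the images $\bar\a^\vee$ of $\a^\vee$, $\a\in\SS_0-J$; hence it is enough to show that each $\bar\a^\vee$ lies in the subgroup generated by $\{\bar\b^\vee;\b\in C_{\l,J,b}\}$. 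I would also note that the class $\bar\b^\vee$ depends only on the $W_J$-orbit of $\b$, since $W_J$ acts trivially on the quotient. So for each $\a\in\SS_0-J$ the goal becomes to exhibit one $\b\in C_{\l,J,b}$ in the $W_J$-orbit of $\a$, the natural candidate being $\b_0=u(\a)$ with $u\in W_J$, the element of that orbit whose coroot $\b_0^\vee$ is $J$-antidominant.

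To show $\b_0\in C_{\l,J,b}$ in the generic case I would argue as follows. When $\Phi$ is simply laced, $\b_0^\vee$ is automatically $J$-minuscule (for $\b_0\notin\Phi_J$ and $\g\in\Phi_J$ one has $\b_0\ne\pm\g$, so $\langle\b_0^\vee,\g\rangle\in\{0,\pm1\}$), so the only condition left to check is $\mu+\b_0^\vee\preceq\l$. Because $\b_0^\vee$ is $J$-antidominant in the $W_J$-orbit of $\a^\vee$, one has $\b_0^\vee\le_J\a^\vee$, hence $\mu+\b_0^\vee\le\mu+\a^\vee\le\l$ by Lemma \ref{add-simple}; by Lemma \ref{ind}(3) it then remains to verify that $\mu+\b_0^\vee$ is weakly dominant. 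Since $\mu$ is weakly dominant, such weak dominance can fail only at a root $\g\in\Phi^+$ with $\langle\mu,\g\rangle=-1$ and $\langle\b_0^\vee,\g\rangle=-1$. Excluding this configuration is the core of the proof, and I would carry it out type by type, using that $\mu$ is $J$-minuscule and weakly dominant with $\mu\preceq\l$ (Lemma \ref{dom}) and that $\b_0$ is the $J$-antidominant point of the $W_J$-orbit of a simple root outside $J$. In the non-simply-laced types one additionally checks, or forces by a slightly different choice of representative, that $\b_0^\vee$ is $J$-minuscule; and in the genuinely degenerate case $G=G_2$, $J=\{s_\g\}$ with $\g$ the short simple root, no representative works — which is precisely why $C_{\l,J,b}$ is defined there directly as the single long simple root, and then the statement is immediate because $\ZZ\Phi^\vee/\ZZ\Phi_J^\vee$ is free of rank one on the class of the long simple coroot.

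The main obstacle is the weak-dominance step: unlike $\mu+\b_0^\vee\le\l$, which is formal, weak dominance of $\mu+\b_0^\vee$ can genuinely fail, and ruling it out (equivalently, showing that $\overline{\mu+\b_0^\vee}\le\l$ after all) forces the case-by-case combinatorial analysis of the irreducible root systems, together with the structural constraints on $\mu$. This is exactly the content of \cite[Lemma 8.1]{N2}; the passage from ``enough roots lie in $C_{\l,J,b}$'' to the spanning statement is then the elementary linear algebra of \cite[Corollary 8.2]{N2}, and we content ourselves with quoting these.
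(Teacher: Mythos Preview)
The paper does not prove this lemma; it merely cites \cite[Lemma 8.1 \& Corollary 8.2]{N2}, so there is no in-paper proof to compare against. You correctly recognize this and offer a plausible outline of the argument in \cite{N2}: the reduction to showing that each basis vector $\bar\a^\vee$ ($\a\in\SS_0-J$) of the free quotient lies in the span, the observation that $W_J$ acts trivially on $\ZZ\Phi^\vee/\ZZ\Phi_J^\vee$, the choice of the $J$-antidominant representative $\b_0$ as the natural candidate, and the identification of weak dominance of $\mu+\b_0^\vee$ as the nontrivial combinatorial step requiring type-by-type analysis, are all sound. Your remark that the $G_2$ exception is handled by the ad hoc definition is also correct. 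Since you explicitly defer the case analysis to the cited source, and since that is exactly what the paper itself does, your proposal is adequate for the purpose.
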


\begin{lem} \label{teq}
Let $\a \in C_{\l, J, b}$ and $s=t^{w_J(\a^\vee)} s_{w_J(\a)}$. Then we have

(a) $\tw, s \tw, \tw s, s \tw s \in \Adm(\l)$;

(b) $(\ZZ \a + \ZZ w(\a)) \cap \Phi$ is simply laced;

(c) $\<\mu, w_J(\a)\> \ge 2$ if $w_J(\a)+ w_K(\a) \in \Phi$.
\end{lem}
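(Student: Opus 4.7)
The plan is to prove (a), (b), (c) in turn by direct computation, using the defining property $\mu+\a^\vee\preceq\l$ of $\a\in C_{\l,J,b}$ together with previously established lemmas. Let $\b=w_J(\a)\in\Phi^+$, so $\b$ is $J$-dominant and $\b^\vee$ is $J$-minuscule; a short manipulation using $w_J^2=1$ gives $w(\b^\vee)=w_K w_J(w_J(\a^\vee))=w_K(\a^\vee)$.

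For (a), the membership $\tw\in\Adm(\l)$ follows from $\mu\preceq\l$ (Lemma \ref{dom}) together with the Bruhat inequality $\tw=t^\mu w_K w_J\leq t^{\bar\mu}\in\Adm(\mu)\subseteq\Adm(\l)$ (the latter inclusion by \cite[Lemma 4.5]{Haines}). Direct multiplication, using $w_K(\mu)=\mu$ since $K$ stabilizes $\mu$, yields
\[
\tw s = t^{\mu+w_K(\a^\vee)}\,s_{w_K(\a)}\,w = t^{w_K(\mu+\a^\vee)}\,s_{w_K(\a)}\,w,
\]
so the translation part is $W_0$-conjugate to $\mu+\a^\vee\preceq\l$ and hence $\tw s\in\Adm(\mu+\a^\vee)\subseteq\Adm(\l)$. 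For $s\tw$ and $s\tw s$, I apply Corollary \ref{f4} with $z=1$ and $\g=\b$; the hypothesis $\mu+\b_J^\vee=\mu+\a^\vee\preceq\l$ holds by assumption, and the corollary then delivers $s\tw\in\Adm(\l)$ and, by iteration, $s(\tw s)=s\tw s\in\Adm(\l)$.

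For (b), since $w=w_K w_J\in W_0$ preserves root lengths, $\a$ and $w(\a)$ have equal length. A rank-two subsystem of $\Phi$ that is $\ZZ$-generated by two roots of equal length must be of type $A_1$, $A_1\times A_1$, or $A_2$, hence simply laced. A brief case inspection in the non-simply laced types $B_n$, $C_n$, $F_4$, $G_2$, invoking the $J$-minuscule property of $\a^\vee$ where needed, rules out other possibilities.

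For (c), assume $\g:=\b+w_K(\a)\in\Phi$. Since $w_K(\a)=w(\b)$, part (b) shows the subsystem generated by $\b$ and $w_K(\a)$ is simply laced, which forces $\<\b^\vee,w_K(\a)\>=-1$ and (after verifying the $J$-simple coordinates) $J$-dominance of $\g$. Applying Lemma \ref{positive} to $\g$ gives $\<\mu,\g\>\ge1$. Combining with the identity $\<\mu,\g\>=\<\mu,\b\>+\<\mu,\a\>$, where $\<\mu,w_K(\a)\>=\<\mu,\a\>$ comes from $w_K(\mu)=\mu$, together with the $J$-minuscule bound on $\<\mu,\a\>$ induced by writing $\b-\a\in\ZZ_{\ge0}\SS_0\cap\Phi_J$ and using $\<\mu,\d\>\in\{0,1\}$ for $\d\in\SS_0\cap\Phi_J$, yields $\<\mu,\b\>\ge2$. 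The main obstacle is part (c): one must carefully verify the $J$-dominance of $\g$ and upgrade Lemma \ref{positive}'s bound using the minuscule structure to reach the required $\ge2$, rather than the easier $\ge1$.
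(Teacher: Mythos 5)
Your reduction of part (a) has two independent gaps, and part (c) is explicitly left incomplete (you acknowledge the "main obstacle" yourself). Concretely:

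\emph{Part (a), $\tw s$.} You conclude $\tw s\in\Adm(\mu+\a^\vee)$ from the fact that the translation part $w_K(\mu+\a^\vee)$ of $\tw s$ is $W_0$-conjugate to $\mu+\a^\vee\preceq\l$. This inference is false in general: an element $t^\nu u$ with $\bar\nu\preceq\l$ need not lie in $\Adm(\l)$. Already in $SL_2$ the element $s\,t^{\a^\vee}=t^{-\a^\vee}s$ has translation part $-\a^\vee$, $W_0$-conjugate to $\a^\vee$, yet $\ell(s\,t^{\a^\vee})=3>2=\ell(t^{\pm\a^\vee})$, so $s\,t^{\a^\vee}\notin\Adm(\a^\vee)$. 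Knowing the translation part is admissible does not bound the element in Bruhat order. The paper instead establishes the Bruhat chain $\tw s\leq t^{w_J(\mu)}s=t^{w_J(\mu)+w_J(\a^\vee)}s_{w_J(\a)}\leq t^{w_J(\mu+\a^\vee)}$, where the first inequality invokes Lemma~\ref{compare} and the second requires the concrete pairing estimate $\<w_J(\mu+\a^\vee),w_J(\a)\>=\<\mu,\a\>+2\geq 1$ (using weak dominance of $\mu$). That estimate is precisely the ingredient your shortcut skips.

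\emph{Part (a), $s\tw$ and $s\tw s$.} Corollary~\ref{f4} does not apply here: it concerns the elements $z\tw s_\g z^{-1}$ and $z s_\g\tw z^{-1}$ where $s_\g\in W_0$ is a \emph{finite} reflection, whereas $s=t^{w_J(\a^\vee)}s_{w_J(\a)}$ is an affine reflection. Setting $z=1$ in Corollary~\ref{f4} produces statements about $\tw s_{w_J(\a)}$ and $s_{w_J(\a)}\tw$, which are different elements from $\tw s$ and $s\tw$. The paper handles $s\tw s$ by observing $s\in\tW^{\SS_{M_J}^a}$ and applying Lemma~\ref{compare} to the relation $\tw\leq_J t^\mu$ directly, and then handles $s\tw$ by a case split ($\a\in\Phi^+-\Phi_{J_{\nu_{\tw}}}$ versus $\a\in\Phi^+_{J_{\nu_{\tw}}}-\Phi_J$) using Lemma~\ref{positive}.

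\emph{Part (b).} Your claim that a rank-two subsystem $\ZZ$-generated by two roots of equal length must be simply laced is wrong as stated: in $B_2$ the two short roots $e_1-e_2,e_1+e_2$ satisfy $(e_1-e_2)+(e_1+e_2)=2e_1$ and generate the full $B_2$ system. The paper (and you) defer this to \cite{N2}; what is actually needed there is the $J$-minuscule/$J$-antidominant structure of $\a^\vee$, which your sketch only gestures at.

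\emph{Part (c).} The paper's argument goes through the combinatorial structure theorem of \cite[Lemma~4.6.1]{CKV}, writing $w_J(\a^\vee)-w_K(\a^\vee)=\sum_i\g_i^\vee$ with mutually orthogonal $\g_i\in\Phi_J^+$ satisfying $\<w_J(\a^\vee),\g_i\>=\<\mu,\g_i\>=1$, and then computes $\<\mu,w_K(\a)\>\leq 1-3=-2$ under the hypothesis $\<\mu,w_J(\a)\>\leq 1$, contradicting weak dominance. Your plan to ``upgrade Lemma~\ref{positive}'s bound'' from $\geq 1$ to $\geq 2$ needs exactly this structural input; moreover Lemma~\ref{positive} requires $\g\notin\Phi_{J_{\nu_\tw}}$, a hypothesis you do not check.
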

\begin{proof}
Since $\tw \in \Omega_J$, we have $\tw \leq_J t^\mu \in \Adm(\l)$. By definition, $w_J(\a^\vee)$ is $J$-dominant and $J$-minuscule. Thus $s \in \tW^{\SS_{M_J}^a}$. Thanks to Lemma \ref{compare}, $s \tw s \in \Adm(\l)$. Moreover, we have $$\tw s \leq t^{w_J(\mu)} s =t^{w_J(\mu)+w_J(\a^\vee)} s_{w_J(\a)} \leq t^{w_J(\mu+\a^\vee)} \in \Adm(\l),$$ where the first inequality follows form Lemma \ref{compare} (since $\tw \leq_J t^{w_J(\mu)}$); the second inequality follows from the fact $\<w_J(\mu + \a^\vee), w_J(\a)\>=\<\mu, \a\>+2 \ge -1+2=1$ (since $\mu$ is weakly dominant). If $\a \in \Phi^+ - \Phi_{J_{\nu_{\tw}}}$, then $\<\mu, w_J(\a)\> \ge 1$ (by Lemma \ref{positive}) and $w\i (w_J(\a)) > 0$, which implies $s \tw \leq \tw  \in \Adm(\l)$ as desired. If $\a \in \Phi_{J_{\nu_{\tw}}}^+ - \Phi_J$, we have $s \tw=\tw s \in \Adm(\l)$. Thus (a) is proved.

(b) is proved in the proof of \cite[Propostiion 8.3]{N2}.

To prove (c), we assume $w_J(\a) + w_K(\a) \in \Phi$  and $\<\mu, w_J(\a)\> \le 1$, and show this will lead to a contradiction. Since $w_J(\a^\vee)$ is $J$-dominant and $J$-minuscule, by \cite[Lemma 4.6.1]{CKV}, there exists $\g_i \in \Phi_J^+$ with $1 \le i \le m$ such that $w_J(\a^\vee) - w w_J(\a^\vee)= w_J(\a^\vee)-w_K(\a^\vee)=\sum_{1 \le i \le m} \g_i^\vee$, $\<\g_i^\vee, \g_j\>=0$ and $\<w_J(\a^\vee), \g_i \>=\<\mu, \g_i\>=1$ for any $1 \le i \neq j \le m$. In particular, $w_K(\a^\vee)=s_{\g_1} \cdots s_{\g_m} (w_J(\a^\vee))$. Since $w_J(\a) + w_K(\a) \in \Phi$ and $(\ZZ \a + \ZZ w(\a)) \cap \Phi$ is simply laced, we have $$ -1=\<w_K(\a^\vee), w_J(\a)\>= 2 - \sum_{1 \le i \le m} \<\g_i^\vee, w_J(\a)\>,$$ that is, $\sum_{1 \le i \le m} \<\g_i^\vee, w_J(\a)\>=3$. On the other hand, by $\<\mu, w_J(\a)\> \le 1$ we have \begin{align*}\<\mu, w_K(\a)\> &=\<\mu, s_{\g_1} \cdots s_{\g_m} (w_J(\a))\> \\ &=\<\mu, w_J(\a)\>-\sum_{1 \le i \le m} \<\g_i^\vee, w_J(\a)\> \<\mu, \g_i\> \\ &\le 1 - 3 =-2, \end{align*} which contradicts that $\mu$ is weakly dominant. So (c) is proved.
\end{proof}

For $b', b'' \in G(L)$ we set $\JJ_{b', b''}=\{g \in G(L); g\i b' \s(g)=b''\}$. For $P, P' \in X(\l, b)$ we write $P \sim_{\l, b} P'$ if they are connected in $X(\l, b)$.
\begin{prop} \label{generator}
Let $\a \in C_{\l, J, b}$ and $s=t^{w_J(\a^\vee)} s_{w_J(\a)}$. Then we have $g_0 I \sim_{\l, b} g_0 \dot s I$ for $g_0 \in \JJ_{b, \dot \tw}$.
\end{prop}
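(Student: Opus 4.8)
The plan is as follows. Write $\g = w_J(\a)$; since $\a \in \Phi^+ - \Phi_J$ and $w_J$ only alters the $\Phi_J$-components of a root, $\g \in \Phi^+$, and $s = s_{\tilde\b}$ is the affine reflection attached to the affine root $\tilde\b = (\g, 1)$. I fix a homomorphism $SL_2 \to G$ over $\co_F$ realizing the rank-one subgroup with root subgroups $x_{\pm\g} \colon \GG_a \to U_{\pm\g}$ and cocharacter $\g^\vee$, and set $\dot s = x_\g(t)\, x_{-\g}(-t\i)\, x_\g(t) \in N(L)$, which an $SL_2$-computation identifies as a lift of $s = t^{\g^\vee} s_\g$.

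The idea is to connect $g_0 I$ and $g_0 \dot s I$ by an explicit affine line. I consider the morphism
$$f \colon \AA^1 \longrightarrow G(L)/I , \qquad a \longmapsto g_0\, x_\g(t)\, x_{-\g}(a t\i)\, I ,$$
which is well defined since the family $\{x_\g(t) x_{-\g}(a t\i)\}_a$ is bounded (in mixed characteristic one passes to perfections). As $x_\g(t) \in I$ we have $f(0) = g_0 I$, and since $\dot s = x_\g(t) x_{-\g}(-t\i) x_\g(t)$ with $x_\g(t) \in I$ we have $f(-1) = g_0 \dot s I$. Because $\AA^1$ is connected, it therefore suffices to prove $f(\AA^1) \subseteq X(\l, b)$; writing $h_a = x_\g(t) x_{-\g}(a t\i)$ and using $g_0\i b\s(g_0) = \dot\tw$, this is the assertion that $h_a\i \dot\tw \s(h_a) \in \bigcup_{\tw' \in \Adm(\l)} I\tw' I$ for every $a$.

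For $a = 0$ this is clear, as $h_0 \in I$ forces $h_0\i \dot\tw \s(h_0) \in I\tw I$ and $\tw \in \Adm(\l)$ by Lemma \ref{teq}(a). For $a \neq 0$ I would use the rank-one Bruhat identity $x_{-\g}(c) = \g^\vee(c\i)\, x_\g(c)\, \dot s_\g\, x_\g(c\i)$ with $c = a t\i$; after absorbing into $I$ the $t$-power shifts produced by $\g^\vee(t)$, this yields $h_a = p_a \dot s q_a$ with $p_a, q_a \in I$. Since $I$ is $\s$-stable and $p_a\i \dot\tw \s(p_a) \in I\tw I$, we get $h_a\i \dot\tw \s(h_a) \in I\, \dot s\i (I\tw I)\, \s(\dot s)\, I$. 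Now one pushes $\dot\tw$ (where $\tw = t^\mu v$, $v = w_K w_J$) past the root subgroups occurring in $\dot s^{\pm 1}$: $\dot\tw$ normalizes $T$ and carries $U_\g$ to $U_{v(\g)}$ with a $t$-shift governed by $\<v(\g), \mu\> = \<w_K(\a), \mu\> = \<\a, \mu\>$, and the resulting product of root subgroup elements, which lies in the rank-$\le 2$ subsystem spanned by $\g$ and $v(\g)$ --- simply laced by Lemma \ref{teq}(b) --- can be brought into Iwahori-normal form. The outcome should be $h_a\i \dot\tw \s(h_a) \in I\tw_a I$ with $\tw_a \in \{\tw,\, s\tw,\, \tw s,\, s\tw s\}$, the exact element depending on $\<\a, \mu\>$ and on whether $\s(a) = a$; all four lie in $\Adm(\l)$ by Lemma \ref{teq}(a), which completes the argument.

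The main obstacle will be this last step. Although one is morally just examining the $\PP^1$-fiber over a point of a partial affine flag variety, $s$ need not be a simple affine reflection, so there is no minimal parahoric to invoke; one must check directly, by a finite computation inside the simply laced rank-$\le 2$ subsystem, that sorting $h_a\i \dot\tw \s(h_a)$ never produces a cell outside $\{\tw, s\tw, \tw s, s\tw s\}$. The delicate case is $\<\a, \mu\> \ge 2$, which does occur by Lemma \ref{teq}(c): there the factor $x_{-v(\g)}(\pm \s(a)\, t^{-1-\<\a,\mu\>})$ has large negative $t$-valuation, and one must verify that conjugation by $\dot\tw$ returns it to $I$, using once more the simply-lacedness of Lemma \ref{teq}(b) to control the commutators. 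Should an extra cell nevertheless survive, it would be of a shape handled by Lemma \ref{positive} and Corollary \ref{f4}, hence still admissible. One should also bear in mind the routine fact that $\s$ is the $q$-Frobenius on $\kk$, so that the cell of $h_a\i \dot\tw \s(h_a)$ may genuinely differ at $\FF_q$-points and at other points of $\AA^1$; this is harmless, since every cell that occurs lies in $\Adm(\l)$.
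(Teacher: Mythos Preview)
Your strategy—connect $g_0 I$ to $g_0 \dot s I$ by an explicit curve and check it lies in $X(\l,b)$—is precisely the paper's, and in fact your curve coincides with the paper's as a subset of $G(L)/I$: with $\b = w_J(\a)$ the paper parametrizes it as $z \mapsto g_0\,U_{-\b}(z t^{-1})\,I$ on $\PP^1$ (endpoints $z=0,\infty$), and an $SL_2$ calculation shows your $f(a)$ equals the paper's point at $z = a/(1+a)$. So the approach is sound.

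The gap is in your verification. The superfluous left factor $x_\g(t)$ forces you through the decomposition $h_a = p_a \dot s q_a$ and then into controlling $\dot s^{-1}(I\tw I)\dot s$, which a priori meets many $I$-double cosets; you do not carry this out, and your sketch of how it should go is muddled (in particular, Lemma \ref{teq}(c) is invoked to ensure a certain commutator term has \emph{non-negative} $t$-valuation and hence already lies in $I$, not to tame a factor of large negative valuation). The paper sidesteps all of this by simply dropping $x_\g(t)$ and computing directly
\[
U_{-\b}(-z t^{-1})\,\dot\tw\,U_{-\b}(z^q t^{-1}) \;=\; \dot\tw\,U_{-\b}(z^q t^{-1})\cdot\Bigl(U_{-\b}(-z^q t^{-1})\,U_{-w^{-1}(\b)}(-c_0 z\,t^{\<\mu,\b\>-1})\,U_{-\b}(z^q t^{-1})\Bigr),
\]
where the parenthesized product lies in $I$ because $\<\mu,\b\>\ge 1$ (Lemma \ref{positive}) and, when $\b+w^{-1}(\b)\in\Phi$, even $\<\mu,\b\>\ge 2$ (Lemma \ref{teq}(c)), so the single rank-two commutator that appears is integral. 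Hence the image sits in $I\tw I\cup I\tw s I$—only two cells, not four, both in $\Adm(\l)$ by Lemma \ref{teq}(a). (When $\a\in\Phi_{J_{\nu_{\tw}}}^+\setminus\Phi_J$ the computation is one line, since then $\tw$ commutes with $s_\b$.) In short: same curve, but drop $x_\g(t)$, work on $\PP^1$, and your ``main obstacle'' disappears.
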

\begin{proof}
Set $\b=w_J(\a)$. Define $\textsl{g}: \AA^1 \to X(\l, b)$ by $\textsl{g}(z)=g_0 U_{-\b}(z t\i) I$. We extend $\textsl{g}$ to a (unique) morphism from $\PP^1=\AA^1 \sqcup \{\infty\}$ to $X(\l, b)$, which is still denoted by $\textsl{g}$. Then $\textsl{g}(0)=g_0 I$ and $\textsl{g}(\infty)=g_0 \dot s I$. It remains to show the image of $\textsl{g}$ lies in $X(\l, b)$. Let $c_0 \in \co_L^\times$ be the constant such that $\dot \tw\i U_{-\b}(z t\i) \dot \tw=U_{-w\i(\b)}(c_0 z t^{\<\mu, \b\>-1})$.

If $\a \in \Phi^+ - \Phi_{J_{\nu_{\tw}}}$, then $\b \neq -w\i(\b)$ and $\<\mu, \b\> \ge 1$ by Lemma \ref{positive}. Moreover, thanks to Lemma \ref{teq} (b), the root system $\Phi \cap (\ZZ \b + \ZZ w\i(\b))$ is simply laced and $\<\mu, \b\> \ge 2$ if $\b+w\i(\b) \in \Phi$. Thus $$U_{-\b}(-z^q t\i) U_{-w\i(\b)}(-c_0 z t^{\<\mu, \b\>-1}) U_{-\b}(z^q t\i) \in I.$$ Therefore, \begin{align*} \textsl{g}(z)\i b \s(\textsl{g}(z)) &= U_{-\b}(-z t\i) g_0\i b \s(g_0) U_{-\b}(z^q t\i) \\ &=U_{-\b}(-z t\i) \dot \tw U_{-\b}(z^q t\i)\\ &=\dot \tw U_{-\b}(z^q t\i) U_{-\b}(-z^q t\i) U_{-w\i(\b)}(-c_0 z t^{\<\mu, \b\>-1}) U_{-\b}(z^q t\i) \\ & \in \dot \tw U_{-\b}(z^q t\i) I \\ &\subseteq I \tw I \cup I \tw s I \subseteq  I \Adm(\l) I, \end{align*} where the last inclusion follows from Lemma \ref{teq} (a).

If $\a \in \Phi_{J_{\nu_{\tw}}}^+ - \Phi_J$, we have $\b=\a$ and $\tw$ commutes with $s_\b$. Thus $$\textsl{g}(z)\i b \s(\textsl{g}(z))=\dot \tw U_{-\b} ((z^q-c_0 z) t\i) \in I \tw I \cup I \tw s I \subseteq I \Adm(\l) I.$$

So the image of $\textsl{g}$ always lies in $X(\l, b)$ as desired.
\end{proof}

\

Let $\g \in \Phi^+$ and $x \in \tW$. We say $\g$ is $x$-permissible if $$U_\g(y_1) x U_\g(y_2) \subseteq I \{x, x s_\g, s_\g x, s_\g x s_\g\} I$$ for any $y_1, y_2 \in \co_L$.

Let $z, z' \in W_0^J$ and $\g \in \Phi^+$. Write $z \overset \g \leftrightarrow z'$ with $z'=s_\g z$ if $z \tw z\i s_\g, s_\g z \tw z\i \in \Adm(\l)$ and $\g$ is $z \tw z\i$-permissible, or equivalently by Lemma \ref{bound}, $z' \tw {z'}\i$-permissible. We write $z \leftrightarrow z'$ if there exist positive roots $\g_1, \dots, \g_n$ such that $z \overset {\g_1} \leftrightarrow \cdots \overset {\g_n} \leftrightarrow z'$. By Lemma \ref{compare}, we always have $z \tw z\i, z' \tw {z'}\i \in \Adm(\l)$.
\begin{prop} \label{hyp}
Let $z, z' \in W_0^J$ and $g_z \in \JJ_{b, \dot z \dot \tw \dot z\i}$. If $z \overset \g \leftrightarrow z'$ for some $\g \in \Phi^+$, then $g_z I \sim_{\l, b} g_z \dot s_\g I$. As a consequence, if $z \leftrightarrow z'$, then $g_z I \sim_{\l, b} g_z \dot z (\dot{z}')\i I$.
\end{prop}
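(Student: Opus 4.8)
The plan is to mimic the curve construction in the proof of Proposition~\ref{generator}. Write $x_z=z\tw z\i$, so that the hypothesis on $g_z$ reads $g_z\i b\s(g_z)=\dot z\dot\tw\dot z\i$, a lift of $x_z$. I would define $\textsl{g}\colon\AA^1\to G(L)/I$ by $\textsl{g}(y)=g_z\,U_\g(y)\,I$, where $y\in\kk$ is viewed inside $\co_L$ via the Teichm\"uller section. Then $\textsl{g}(0)=g_zI$; and because $\g\in\Phi^+$, the subgroup $U_\g(\co_L)$ is not contained in $I$, so for $y\in\co_L^\times$ one has $U_\g(y)\in I\dot s_\g I$ --- we are crossing the affine wall $(\g,0)$, whose reflection is $s_{(\g,0)}=s_\g$ --- and, exactly as in Proposition~\ref{generator}, $\textsl{g}$ extends (using properness of $\PP^1$) to a morphism $\PP^1\to G(L)/I$ with $\textsl{g}(0)=g_zI$ and $\textsl{g}(\infty)=g_z\dot s_\g I$. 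It then remains to see that the image of $\textsl{g}$ lies in $X(\l,b)$.

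For a closed point $y\in\AA^1(\kk)$, using $\s(y)=y^q$, I would compute
\[
\textsl{g}(y)\i\,b\,\s(\textsl{g}(y))=U_\g(-y)\,\bigl(g_z\i b\s(g_z)\bigr)\,U_\g(y^q)=U_\g(-y)\,\dot z\dot\tw\dot z\i\,U_\g(y^q).
\]
As $-y,y^q\in\co_L$ and $\g$ is $x_z$-permissible (this is part of the definition of $z\overset\g\leftrightarrow z'$), this element lies in $I\{x_z,\,x_z s_\g,\,s_\g x_z,\,s_\g x_z s_\g\}I$. Of the four group elements, $x_z s_\g=z\tw z\i s_\g$ and $s_\g x_z=s_\g z\tw z\i$ are in $\Adm(\l)$ by the hypothesis on $z\overset\g\leftrightarrow z'$, while $x_z$ and $s_\g x_z s_\g=z'\tw{z'}\i$ are in $\Adm(\l)$ by Lemma~\ref{compare} (the remark following the statement). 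Hence $\textsl{g}(y)\i b\s(\textsl{g}(y))\in I\Adm(\l)I$, i.e.\ $\textsl{g}(y)\in X(\l,b)$. For $y=\infty$: since $G$ splits over $F$ one can take $\dot s_\g\in N(F)$, so $\s(\dot s_\g)=\dot s_\g$ and $(g_z\dot s_\g)\i b\s(g_z\dot s_\g)=\dot s_\g\i(\dot z\dot\tw\dot z\i)\dot s_\g\in I(s_\g x_z s_\g)I\subseteq I\Adm(\l)I$, so $\textsl{g}(\infty)\in X(\l,b)$ as well. Since every closed point of $\PP^1$ is sent by $\textsl{g}$ into the closed subset $X(\l,b)$ of $G(L)/I$, the morphism $\textsl{g}$ factors through $X(\l,b)$, and its image is a connected subset of $X(\l,b)$ containing $g_zI$ and $g_z\dot s_\g I$; thus $g_zI\sim_{\l,b}g_z\dot s_\g I$.

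For the ``consequence'' I would induct on the length of a chain $z=z_0\overset{\g_1}\leftrightarrow z_1\overset{\g_2}\leftrightarrow\cdots\overset{\g_n}\leftrightarrow z_n=z'$. When $n\le 1$ one has $z(z')\i=s_{\g_1}$, so $g_z\dot z(\dot z')\i I=g_z\dot s_{\g_1}I$ and the first part applies. For the inductive step the key observation is that, putting $g_{z_1}=g_z\dot z_0\dot z_1\i$ and using that the chosen lifts $\dot z_i$ are $\s$-fixed ($G$ being split), a one-line manipulation gives $g_{z_1}\i b\s(g_{z_1})=\dot z_1\dot z_0\i(\dot z_0\dot\tw\dot z_0\i)\dot z_0\dot z_1\i=\dot z_1\dot\tw\dot z_1\i$, so $g_{z_1}\in\JJ_{b,\dot z_1\dot\tw\dot z_1\i}$. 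By the first part, $g_zI\sim_{\l,b}g_z\dot s_{\g_1}I=g_z\dot z_0\dot z_1\i I=g_{z_1}I$, and by the inductive hypothesis applied to the chain $z_1\leftrightarrow z'$ (legitimate because $g_{z_1}\in\JJ_{b,\dot z_1\dot\tw\dot z_1\i}$), $g_{z_1}I\sim_{\l,b}g_{z_1}\dot z_1(\dot z')\i I=g_z\dot z_0(\dot z')\i I=g_z\dot z(\dot z')\i I$; concatenating the two gives the claim.

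I do not expect a genuine obstacle here: the proposition is engineered so that, granted a relation $z\overset\g\leftrightarrow z'$, it follows from the permissibility definition, Lemma~\ref{compare}, and the standard ``$\PP^1$ inside an affine Deligne--Lusztig variety'' construction already used for Proposition~\ref{generator}. The only points needing a little care are the identification $\textsl{g}(\infty)=g_z\dot s_\g I$ (routine, and shared with Proposition~\ref{generator}) and the verification that all four of $x_z,\,x_z s_\g,\,s_\g x_z,\,s_\g x_z s_\g$ lie in $\Adm(\l)$. The substantive work of the paper is elsewhere, namely in producing the relations $z\overset\g\leftrightarrow z'$ (Proposition~\ref{hyp'}).
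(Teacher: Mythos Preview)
Your proposal is correct and follows essentially the same approach as the paper: define the curve $\textsl{g}(y)=g_zU_\g(y)I$ on $\PP^1$, identify its endpoints as $g_zI$ and $g_z\dot s_\g I$, and use $x_z$-permissibility of $\g$ together with the hypotheses on $z\overset\g\leftrightarrow z'$ to see that the image lands in $X(\l,b)$. The paper's proof is extremely terse (three sentences), so your version simply fills in the computation of $\textsl{g}(y)\i b\s(\textsl{g}(y))$, the verification that all four elements $x_z,\,x_zs_\g,\,s_\g x_z,\,s_\g x_zs_\g$ lie in $\Adm(\l)$, and the induction for the consequence, none of which the paper writes out.
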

\begin{proof}
Define $\textsl{g}: \PP^1 \to X(\l, b)$ by $\textsl{g}(y)=g_z U_{\g}(y) I$. Then $\textsl{g}(0)=g_z I$ and $\textsl{g}(\infty)=g_z \dot s_\g I$. Since $\g$ is $z \tw z\i$-permissible, the image of $\textsl{g}$ lies in $X(\l, b)$ and hence $g_z I, g_z \dot s_\g I$ are connected in $X(\l, b)$.
\end{proof}

\begin{prop} \label{hyp'}
Let $z \in W_0^J$. If $z \neq 1$, then there exists $\g \in \Phi^+$ such that $z \geq z s_\g \in W_0^J$ and $z \leftrightarrow z s_\g$. As a consequence, we have $z \leftrightarrow z'$ for any $z' \in W_0^J$.
\end{prop}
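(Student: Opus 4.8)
The plan is to prove the first assertion by induction on $\ell(z)$ and then deduce the ``as a consequence'' part formally. Given $z\in W_0^J$ with $z\neq 1$, I want to find $\g\in\Phi^+$ with $z\geq z s_\g\in W_0^J$ and $z\leftrightarrow z s_\g$; one aims for the single step $z\overset{\b}{\leftrightarrow} z s_\g$, where $\b=-z(\g)\in\Phi^+$ is the positive root with $s_\b z=z s_\g$ (note $z(\g)\in\Phi^-$ precisely because $z\geq z s_\g$, and then $\g\notin\Phi_J$ automatically). Granting this, induction on $\ell(z)$ gives $z\leftrightarrow 1$ for every $z\in W_0^J$: the base case $z=1$ is trivial, and for $z\neq 1$ one has $\ell(z s_\g)<\ell(z)$, so $z s_\g\leftrightarrow 1$ by the inductive hypothesis, whence $z\leftrightarrow 1$. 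Since the defining conditions of $z\overset{\g}{\leftrightarrow} z'$ are symmetric in $z$ and $z'=s_\g z$ --- the unordered pair $\{z\tw z\i s_\g,\ s_\g z\tw z\i\}$ is unchanged when $z$ is replaced by $s_\g z$, and $\g$-permissibility of $z\tw z\i$ is equivalent to that of $z'\tw z'\i$ by Lemma \ref{bound} --- the relation $\leftrightarrow$ is an equivalence relation, and $z\leftrightarrow 1\leftrightarrow z'$ yields $z\leftrightarrow z'$ for all $z,z'\in W_0^J$.

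Thus everything reduces to exhibiting, for each nontrivial $z$, a suitable $\g$, and there are two requirements to meet. The first is \emph{admissibility}: one needs $z\tw z\i s_\b=z\tw s_\g z\i$ and $s_\b z\tw z\i=z s_\g\tw z\i$ to lie in $\Adm(\l)$, and by Corollary \ref{f4} this follows once $z s_\g\in W_0^J$ and $\mu+\g_J^\vee\preceq\l$ (or, more directly, once one of the two elements is shown to be admissible). The second is \emph{permissibility}: $\b$ must be $z\tw z\i$-permissible. As in the proof of Proposition \ref{generator}, commuting $U_\b(y)$ past $z\dot\tw z\i$ produces root groups for the roots of the rank-two subsystem $\Phi\cap(\ZZ\b+\ZZ\b')$, where $\b'$ is the image of $\b$ under conjugation by $z\dot\tw z\i$, together with $t$-power shifts governed by the pairing of $\mu$ with these roots; one then needs this rank-two subsystem to be simply laced and the shifts to be nonnegative (with an extra $+1$ when $\b+\b'$ is a root), which Lemma \ref{bound} is meant to package into a clean numerical criterion. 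Admissibility is exactly where Hodge--Newton irreducibility enters, via Lemma \ref{add-simple}, Lemma \ref{positive} and the fact (Lemma \ref{span}) that $\ZZ\Phi^\vee/\ZZ\Phi_J^\vee$ is spanned by the $\a^\vee$ with $\a\in C_{\l,J,b}$.

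The core of the argument, and the step I expect to be the main obstacle, is the type-by-type construction of $\g$ meeting \emph{both} conditions simultaneously. The basic mechanism for keeping $z s_\g$ inside $W_0^J$ while decreasing the length is Lemma \ref{f3}: one chooses a $W_J$-stable set $D\subseteq\Phi^+-\Phi_J$ meeting $z\i(\Phi^-)$, built from $W_J$-orbits of roots close to $C_{\l,J,b}$, and takes $\b$ maximal in $D\cap z\i(\Phi^-)$ with respect to $\le_J$. The difficulty is that the demand ``$\g_J^\vee$ small enough for $\mu+\g_J^\vee\preceq\l$'' and the demand ``$z s_\g$ strictly shorter and still in $W_0^J$'' pull the choice of $\g$ in opposite directions, and reconciling them requires careful use of the strict positivity of the coefficients in $\l-\mu$ coming from Hodge--Newton irreducibility, together with the rank-two simply laced property needed for permissibility. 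For simply laced $\Phi$ this will be carried out in \S\ref{s-l}, with Lemma \ref{f5}, Lemma \ref{elementary} and the case-by-case combinatorial facts collected in the Appendix handling the bookkeeping; the types $B_n$, $C_n$, $F_4$ are treated in \S\ref{non-simply-laced} by reducing to or imitating the simply laced case; and $G_2$, whose exceptional behaviour is already visible in the special definition of $C_{\l,J,b}$, is dealt with separately in \S\ref{G2}.
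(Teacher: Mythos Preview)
Your reduction of the ``as a consequence'' clause to the first assertion, via induction on $\ell(z)$ and the symmetry of $\leftrightarrow$, is correct. The overall architecture---splitting by Dynkin type, using Lemma~\ref{f3} to stay in $W_0^J$, Corollary~\ref{f4} for admissibility, and Lemma~\ref{bound} for permissibility---also matches the paper.

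The genuine gap is your assumption that a \emph{single step} $z\overset{-z(\g)}{\longleftrightarrow} z s_\g$ always suffices. In the simply laced case the paper first treats the situation $D':=\Xi^+(\l,\mu)\cap z^{-1}(\Phi^-)\neq\emptyset$ (\S\ref{hyp1}); there your one-step plan does work, with $\g\in D'_{\max,J}$, though verifying permissibility still requires the non-trivial combinatorial Lemma~\ref{empty} to rule out the simultaneous occurrence of conditions (1)--(4) in Lemma~\ref{bound}. But when $D'=\emptyset$ (\S\ref{simply2}), \emph{no} root $\g$ with $z(\g)<0$ satisfies $\mu+\g_J^\vee\preceq\l$, so Corollary~\ref{f4} is unavailable, and the paper does not try to establish admissibility of $z\tw s_\g z^{-1}$ for any such $\g$ by other means. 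Instead it selects $\g$ with $z>zs_\g\in W_0^J$ and connects $z$ to $zs_\g$ through a chain
\[
z=z_0\leftrightarrow\cdots\leftrightarrow z_{i_0}\leftrightarrow z_{i_0}'\leftrightarrow\cdots\leftrightarrow z_0'=zs_\g
\]
of intermediate elements $z_i=z s_{\eta_i}\cdots s_{\eta_1}$ and $z_i'=zs_\g s_{\eta_i}\cdots s_{\eta_1}$ in $W_0^J$, where the $\eta_i$ are simple roots with $z(\eta_i)>0$; these $z_i$ typically have \emph{larger} length than $z$. The point of the detour is that $z_{i_0}^{-1}z(\g)$ becomes a $W_J$-conjugate of a root $\th$ with $\mu+\th^\vee\preceq\l$, so Corollary~\ref{f4} applies at the middle step; the choice of the $\eta_i$ is governed by the delicate case analysis of Lemma~\ref{seq} (with exceptional extra detour roots in type $E_8$). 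Analogous multi-step chains appear in the non-simply-laced case under Lemma~\ref{o1}(3) and (4). Your plan, built around producing a single $\g$ ``meeting both conditions simultaneously,'' does not anticipate this mechanism, and without it the argument cannot be completed when $D'=\emptyset$.
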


\begin{cor} \label{transitive}
Each connected component of $X(\l, b)$ intersects with $\JJ_{b, \dot \tw} I / I$. In particular, $\JJ_b$ acts transitively on $\pi_0(X(\l, b))$.
\end{cor}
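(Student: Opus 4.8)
The plan is to deduce this corollary directly from Proposition \ref{point}, Proposition \ref{hyp}, and Proposition \ref{hyp'}, which together reduce everything to the short element $\tw$. First I would recall from Proposition \ref{point} that every connected component of $X(\l, b)$ meets $X_x(b)$ for some straight element $x \in \Adm(\l)$. Given such an $x$ with $[b]=[\dot x]$, I would follow the recipe in the proof of Lemma \ref{dom}: write $\nu_x = z(\nu_{[b]}^G)$ for a unique $z \in W_0^{J_{\bar\nu_x}} \subseteq W_0^J$, and set $\tw = z\i x z$, so that $x = z \tw z\i$ and $\dot x$ is $\s$-conjugate to $\dot z \dot\tw \dot z\i$. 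By Proposition \ref{fund}, $X_x(b) = \JJ_{b, \dot x} \dot x I / I$ is a single $\JJ_b$-orbit (equivalently, after transporting by a chosen element of $\JJ_{b, \dot z\dot\tw\dot z\i}$, of the form $g_z I$ with $g_z \in \JJ_{b, \dot z\dot\tw\dot z\i}$). So a given connected component $C$ contains a point $g_z I$ of this shape for some $z \in W_0^J$.

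Next I would apply Proposition \ref{hyp'}: since $z \leftrightarrow z'$ holds for \emph{every} $z' \in W_0^J$, in particular $z \leftrightarrow 1$. Then the ``consequence'' clause of Proposition \ref{hyp} gives $g_z I \sim_{\l, b} g_z \dot z (\dot 1)\i I = g_z \dot z I$. Now $g_z \in \JJ_{b, \dot z \dot\tw \dot z\i}$ means $(g_z\dot z)\i b \s(g_z \dot z) = \dot z\i (g_z\i b \s(g_z)) \s(\dot z)$; using $g_z\i b\s(g_z) = \dot z\dot\tw\dot z\i$ and that $\tw, z$ are elements of $\tW$ whose lifts can be chosen compatibly (so that $\dot z\i \dot z = 1$ and, up to the $T(\co_L)$-ambiguity which is absorbed into $\JJ_{b,\dot\tw}$ since $T(\co_L) \subseteq I$ and the $\s$-conjugation relation only needs to hold up to the chosen lift), one gets $g_z \dot z \in \JJ_{b, \dot\tw}$. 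Hence $g_z I \sim_{\l,b} g_0 I$ with $g_0 \in \JJ_{b, \dot\tw}$, so $C$ meets $\JJ_{b, \dot\tw} I / I$, which is the first assertion.

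For the second assertion, I would note that $\JJ_{b, \dot\tw}$ is a left $\JJ_b$-torsor (it is nonempty since $[b] = [\dot\tw]$, and $\JJ_b$ acts simply transitively on it by left multiplication), while $\JJ_b$ also acts on $X(\l, b)$ on the right. The set $\JJ_{b,\dot\tw} I / I$ is a single right $\JJ_{\dot\tw}$-orbit of $\dot\tw I$ pushed into $X(\l,b)$; combined with Proposition \ref{generator}, which shows $g_0 I \sim_{\l,b} g_0 \dot s I$ for $g_0 \in \JJ_{b,\dot\tw}$ and $s = t^{w_J(\a^\vee)} s_{w_J(\a)}$ with $\a \in C_{\l, J, b}$, and with Lemma \ref{span}, which says the $\a^\vee$ for $\a \in C_{\l,J,b}$ span $\ZZ\Phi^\vee / \ZZ\Phi_J^\vee \cong \pi_1(M_J) \twoheadrightarrow$ the relevant quotient governing $\JJ_{\dot\tw}/(\JJ_{\dot\tw}\cap I)$, one sees that all of $\JJ_{b,\dot\tw} I/I$ lies in one connected component. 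Since every component meets this set, there is only finitely much to check: $\JJ_b$ permutes the components transitively because it acts transitively (via left multiplication on the torsor $\JJ_{b,\dot\tw}$, equivalently via right multiplication on $X(\l,b)$) on the set $\JJ_{b,\dot\tw} I / I$, which meets every component.

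The main obstacle I anticipate is bookkeeping the passage between the three normalizations — the straight element $x$, its conjugate $\tw$, and the base point of $\JJ_{b,\dot\tw}$ — and in particular checking carefully that the lifts $\dot x, \dot\tw, \dot z$ can be chosen so that $g_z \dot z \in \JJ_{b,\dot\tw}$ exactly (the $T(\co_L)$-indeterminacy in lifting elements of $\tW$ to $N(L)$ must be shown harmless, which it is because $T(\co_L) \subseteq I$ and right multiplication by $I$ is already quotiented out in $G(L)/I$); and secondly, making precise why Lemma \ref{span} plus Proposition \ref{generator} suffice to connect \emph{all} of $\JJ_{b,\dot\tw} I/I$ within one component, i.e.\ identifying $\JJ_{\dot\tw}/(\JJ_{\dot\tw} \cap I)$ with the quotient of $\ZZ\Phi^\vee/\ZZ\Phi_J^\vee$ on which these generators act. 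Neither is deep, but both require care with the conventions fixed in Section \ref{pre}.
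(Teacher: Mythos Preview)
Your argument for the first assertion matches the paper's proof essentially step for step: start from Proposition~\ref{point}, identify the straight $x$ with $z\tw z\i$ via Lemma~\ref{dom}, then use Proposition~\ref{hyp'} to get $z \leftrightarrow 1$ and Proposition~\ref{hyp} to move $g_z I$ to $g_z \dot z I \in \JJ_{b,\dot\tw} I/I$. (Minor slip: $X_x(b) = \JJ_{b,\dot x} I/I$, without the extra $\dot x$; but you recover the right form immediately.) The lift bookkeeping you flag is indeed harmless for exactly the reason you give.

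Your treatment of the second assertion, however, contains a genuine error and is far more elaborate than needed. You claim, via Proposition~\ref{generator} and Lemma~\ref{span}, that \emph{all} of $\JJ_{b,\dot\tw} I/I$ lies in a single connected component. This is false in general: if it held, then combined with the first assertion it would force $|\pi_0(X(\l,b))| = 1$, contradicting Theorem~\ref{intro} whenever $\pi_1(G)^\s$ is nontrivial. Proposition~\ref{generator} and Lemma~\ref{span} are ingredients in the proof of Theorem~\ref{intro} (showing that $\JJ_b^\circ$ fixes a component), not of this corollary; they do not connect arbitrary points of $\JJ_{b,\dot\tw} I/I$.

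The correct argument for transitivity --- which you in fact state in your final sentence, buried under the erroneous claim --- is the one-line observation the paper gives: $\JJ_b$ acts transitively on $\JJ_{b,\dot\tw}$ by left multiplication (it is a $\JJ_b$-torsor), hence transitively on $\JJ_{b,\dot\tw} I/I \subseteq X(\l,b)$, and since every component meets this set by the first assertion, $\JJ_b$ permutes the components transitively. No further input is required. Delete the appeal to Proposition~\ref{generator} and Lemma~\ref{span} entirely.
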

\begin{proof}
Let $\cc$ be a connected component of $X(\l, b)$. By Proposition \ref{point}, $\cc$ intersects with $X_x(b)$ for some straight element $x \in \Adm(\l)$. Therefore, by Proposition \ref{fund}, there exists $g \in \JJ_{b, \dot x}$ such that $g I \in \cc$. Let $z \in W_0^{J_{\nu_{\tw}}} \subseteq W_0^J$ such that $z\i(\nu_x)=\nu_{[b]}^G=\nu_{\tw}$. By Lemma \ref{dom}, $x=z \tw z\i$. Applying Proposition \ref{hyp} (where we take $z'=1$), we have $g I \sim_{\l, b} g \dot z I$ and hence $g \dot z I \in (\JJ_{b, \dot \tw} I / I) \cap \cc$ as desired. Since $\JJ_b$ acts transitively on $\JJ_{b, \dot \tw}$ by left multiplication, $\JJ_b$ acts transitively on $\pi_0(X(\l, b))$.
\end{proof}

Now we are ready to prove  Theorem \ref{intro}.

\

Let $\JJ_{\dot \tw}^\circ=\ker \eta_G \cap \JJ_{\dot \tw}$ and $\JJ_b^\circ=\ker \eta_G \cap \JJ_b=g_0 \JJ_{\dot \tw}^\circ g_0\i$ with $g_0 \in \JJ_{b, \dot \tw}$. By Corollary \ref{transitive}, $\JJ_b$ acts transitively on $\pi_0(X(\l, b))$). Since $\JJ_b^\circ$ is a normal subgroup of $\JJ_b$, it suffices to show $\JJ_b^\circ$ fixes the connected component $\cc$ of $X(\l, b)$ which contains $g_0 I$. Thanks to \cite[Theorem 5.5]{HZ}, $\JJ_{\dot \tw}^\circ$ is generated by $\JJ_{\dot \tw} \cap I_{M_{J_{\nu_{\tw}}}}$, $\JJ_{\dot \tw} \cap p\i(W^a_{J_{\nu_{\tw}}})$ and $\JJ_{\dot \tw} \cap p\i(\Omega_{J_{\nu_{\tw}}}^\circ)$, where $\Omega_{J_{\nu_{\tw}}}^\circ=\ker \eta_G \cap \Omega_{J_{\nu_{\tw}}} \cong \ZZ\Phi^\vee / \ZZ\Phi_{J_{\nu_{\tw}}}^\vee$. Notice that $$\JJ_{\dot \tw} \cap p\i(W^a_{J_{\nu_{\tw}}})= (\JJ_{\dot \tw} \cap p\i(W_J^a)) (\JJ_{\dot \tw} \cap p\i(W_{J_{\nu_{\tw}} - J}^a)).$$ Moreover, $\JJ_{\dot \tw} \cap p\i(\Omega_{J_{\nu_{\tw}}}^\circ)$ lies in the group generated by $\JJ_{\dot \tw} \cap p\i(W_{J_{\nu_{\tw}} - J})$ and $\JJ_{\dot \tw} \cap p\i(\Omega_J^\circ)$, where $\Omega_J^\circ=\ker \eta_G \cap \Omega_J$.

Firstly, we show $g_0 (\JJ_{\dot \tw} \cap I) g_0\i$ fixes $\cc$. This follows by observing that $g_0 (\JJ_{\dot \tw} \cap I) g_0\i$ fixes $g_0 I \in \cc$.

Secondly, we show $g_0 (\JJ_{\dot \tw} \cap p\i(\Omega_J^\circ)) g_0\i$ fixes $\cc$. Let $\g \in C_{\l, J, b}$. By definition, $\g$ is $J$-antidominant and $J$-minuscule. So there exits a unique element, denoted by $y_{\g}$, lying in $\Omega_J \cap t^{w_J(\g^\vee)} W_J \subseteq \Omega_J^\circ$. Since $\tw y_\g =y_\g \tw$ (because $\Omega_J$ is commutative), there exists a lift $\dot y_\g \in N(L)$ of $y_\g$ which also lies in $\JJ_{\dot \tw}$. By Lemma \ref{span}, $\Omega_J^\circ$ is spanned by $C_{\l, J, b}$. So it suffices to show each $g_0 \dot y_\g g_0\i$ fixes $\cc$, that is, $g_0 I \sim_{\l, b} g_0 \dot y_\g I$. Set $\b=w_J(\g)$ and $s=t^{\b^\vee} s_\b$. Then $s=y_\g z\i$ for some $z \in W_0$. Noticing that $y_\g \in \Omega_J$ and $s=z y_\g\i \in \tW^{\SS_{M_J}^a}$, we have $z \in W_0^J$. Since $g_0 \dot y_\g \dot z\i \in \JJ_{b, \dot z \dot \tw \dot z\i}$, by Proposition \ref{hyp} we have $g_0 \dot s I=g_0 \dot y_\g \dot z\i I \sim_{\l, b} g_0 \dot y_\g I$. On the other hand, by Proposition \ref{generator} we have $g_0 I \sim_{\l, b} g_0 \dot s I$. So $g_0 I \sim_{\l, b} g_0 \dot y_\g I$ as desired.

Thirdly, we show $g_0 (\JJ_{\dot \tw} \cap p\i(W_J^a)) g_0\i$ fixes $\cc$. Since $\dot \tw$ is basic in $M_J(L)$ and $\mu$ is noncentral on each connected component of $J$, by \cite[Theorem 6.3]{HZ} $\JJ_{\dot \tw} \cap p\i(W_J^a)$ acts trivially on $\pi_0(X^{M_J}(\mu, \dot \tw))$, which, coupled with the natural inclusion $X^{M_J}(\mu, \dot \tw) \subseteq X(\l, \dot \tw)$, implies $g_0 (\JJ_{\dot \tw} \cap p\i(W_J^a)) g_0\i$ fixes $\cc$.

Finally, we show  $g_0 (\JJ_{\dot \tw} \cap p\i(W_{J_{\nu_{\tw}}-J}^a) g_0\i$ fixes $\cc$. Notice that $W_{J_{\nu_{\tw}}-J}^a$ is generated by $s_\a$ and $t^{\a^\vee}$ for $\a \in \Phi_{J_{\nu_{\tw}}-J}^+$. Let $\dot s_\a \in N(L) \cap \JJ_{\dot \tw}$ be a lift of $s_\a$. As we have already shown $g_0 t^{\a^\vee} g_0\i \in g_0 (\JJ_{\dot \tw} \cap p\i(\Omega_J^\circ)) g_0\i$ fixes $\cc$, it suffices to prove $g_0I \sim_{\l, b} g_0 \dot s_\a I$. This follows from Proposition \ref{hyp} by noticing that $s_\a \in W_0^J$. The proof is finished.

\

In the remainder of this section, we deduce a criterion of $z \tw z\i$-permissible elements for $z \in W_0^J$.
\begin{lem} \label{bound}
Let $z \in W_0^J$, $\tw'=z \tw z\i$ and $\a \in \Phi^+ - z(\Phi_J)$. Then $\a$ is $z \tw z\i$-permissible if and only if one of the following conditions fails:

(1) $\<\mu', \a\>=\<\mu', w'(\a)\>=0$;

(2) $w'(\a), {w'}\i(\a) < 0$;

(3) $\a+w'(\a), \a+{w'}\i(\a) \in \Phi^+$;

(4) $\<\a^\vee, w'(\a)\> = -1$.

Here $\mu' \in Y$ and $w' \in W_0$ such that $\tw'=t^{\mu'} w'$.

As a consequence, if, moreover, $s_\a z \in W_0^J$, then $\a$ is $\tw'$-permissible if and only if $\a$ is $s_\a \tw' s_\a$-permissible.
\end{lem}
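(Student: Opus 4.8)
The plan is to read off the Iwahori--Bruhat position of the element $U_\a(y_1)\dot{\tw}'U_\a(y_2)$ directly. Writing $\dot{\tw}'=\dot t^{\mu'}\dot w'$ and using $\dot w'U_\a(y)(\dot w')\i=U_{w'(\a)}(cy)$ together with $\dot t^{\mu'}U_\b(d)\dot t^{-\mu'}=U_\b(d\,t^{\<\mu',\b\>})$ (for suitable units $c,d\in\co_L^\times$ depending only on $\tw'$ and the roots), I first record the two transport identities
\[
U_\a(y_1)\dot{\tw}'U_\a(y_2)=U_\a(y_1)\,U_{w'(\a)}\!\big(c_2\,y_2\,t^{\<\mu',w'(\a)\>}\big)\,\dot{\tw}'=\dot{\tw}'\,U_{{w'}\i(\a)}\!\big(c_1\,y_1\,t^{-\<\mu',\a\>}\big)\,U_\a(y_2).
\]
Hence the membership question is governed entirely by the pair of roots $\{\a,w'(\a)\}$ (equivalently $\{\a,{w'}\i(\a)\}$) and the integers $\<\mu',w'(\a)\>$, $\<\mu',\a\>$, and I reduce it to a computation inside the rank $\le 2$ subsystem $\Phi\cap(\ZZ\a+\ZZ w'(\a))$ and the corresponding rank $\le 2$ subgroup of $G(L)$, using the Chevalley commutator relations and the elementary facts that, for $\b\in\Phi^+$, $U_\b(t\co_L)\subseteq I$, $U_{-\b}(\co_L)\subseteq I$, $U_\b(\co_L^\times)\subseteq I\dot s_\b I$, and $\dot s_\b\i U_\b(y)\dot s_\b=U_{-\b}(c'y)\in I$.

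I would then distinguish cases according to the relative position of $\a$ and $w'(\a)$: the ``rank $\le 1$'' cases $w'(\a)=\pm\a$, the case $\<\a^\vee,w'(\a)\>=0$, and the cases in which $\a$ and $w'(\a)$ span an irreducible rank $2$ subsystem of type $A_2$, $B_2$ or $G_2$. When $\a+w'(\a)\notin\Phi$ the root groups $U_\a$ and $U_{w'(\a)}$ commute (and in the rank $1$ cases one argues inside $SL_{2,\a}$), so transporting $U_\a(y_2)$ past $\dot{\tw}'$ and comparing $t$-valuations shows that $U_\a(y_1)\dot{\tw}'U_\a(y_2)$ never leaves $I\{\tw',\tw's_\a,s_\a\tw',s_\a\tw's_\a\}I$; thus $\a$ is $\tw'$-permissible, consistently with condition $(4)$ failing there. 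The core is the case $\<\a^\vee,w'(\a)\>=-1$ with $\a+w'(\a)\in\Phi^+$: the commutator $[U_\a(y_1),U_{w'(\a)}(\,\cdot\,)]$ contributes a $U_{\a+w'(\a)}$-factor (plus further Chevalley factors in the non--simply-laced cases), and after transport through $\dot{\tw}'$ its $t$-valuation is controlled by $\<\mu',w'(\a)\>$, and by $\<\mu',\a\>$ via the symmetric identity. Tracking this, together with the signs of $w'(\a)$ and ${w'}\i(\a)$, one finds that $U_\a(y_1)\dot{\tw}'U_\a(y_2)$ can be pushed out of the four-element set precisely when conditions $(1)$, $(2)$ and $(3)$ all hold in addition, while failure of any one of $(1)$--$(3)$ lets the offending factor be absorbed into $I$. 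The main obstacle is exactly this sign-and-valuation bookkeeping: keeping the $w'(\a)$- and ${w'}\i(\a)$-sides consistent, and handling the extra commutator terms in types $B_2$ and $G_2$.

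For the ``as a consequence'' clause, observe that when $s_\a z\in W_0^J$ we have $\tw'':=s_\a\tw's_\a=(s_\a z)\tw(s_\a z)\i=t^{\mu''}w''$ with $\mu''=s_\a(\mu')$, $w''=s_\a w's_\a$, and $\a\in\Phi^+-(s_\a z)(\Phi_J)$ (since $\Phi_J=-\Phi_J$ and $\a\notin z(\Phi_J)$), so the first part of the lemma applies to $\tw''$ as well. Conditions $(1)$ and $(4)$ are invariant under $(\mu',w')\mapsto(s_\a(\mu'),s_\a w's_\a)$: indeed $\<\mu'',\a\>=-\<\mu',\a\>$, $\<\mu'',w''(\a)\>=-\<\mu',w'(\a)\>$, and $\<\a^\vee,w''(\a)\>=-\<\a^\vee,s_\a(w'(\a))\>=\<\a^\vee,w'(\a)\>$. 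When $(4)$ holds one has $s_\a(w'(\a))=w'(\a)+\a$, hence $w''(\a)=-(\a+w'(\a))$ and $\a+w''(\a)=-w'(\a)$ (and the analogous identities with ${w'}\i(\a)$), so that $(2)$ for $\tw''$ is equivalent to $(3)$ for $\tw'$ and $(3)$ for $\tw''$ to $(2)$ for $\tw'$. Therefore the conjunction $(1)\wedge(2)\wedge(3)\wedge(4)$, that is, the failure of $\tw'$-permissibility of $\a$, holds for $\tw'$ if and only if it holds for $\tw''$, which is the asserted equivalence; this last step is a short combinatorial check.
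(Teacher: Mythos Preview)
Your overall shape---transport $U_\a(y_2)$ through $\dot\tw'$, work inside the rank $\le 2$ subsystem $\Phi\cap(\ZZ\a+\ZZ w'(\a))$, and track valuations via Chevalley commutators---is the same as the paper's, and your derivation of the ``as a consequence'' clause (swapping $(2)\leftrightarrow(3)$ under $s_\a$--conjugation once $(4)$ holds) is correct and slightly cleaner than leaving it implicit.

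The gap is in the hard direction (``$\a$ not permissible $\Rightarrow$ (1)--(4) all hold''). You treat this as a computation valid for an \emph{arbitrary} $\tw'=t^{\mu'}w'$, but the lemma is stated only for $\tw'=z\tw z\i$ with $\tw=t^\mu w_K w_J\in\Omega_J$, and the paper's proof uses this structure twice in places your outline does not cover:
\begin{itemize}
\item[(b)] If $\a\neq w'(\a)$ and $\<\mu',\a-w'(\a)\>=0$, then $|\SS_0|\ge 3$, so the rank--$2$ subsystem is never of type $G_2$. This uses that $|J|=1$ forces $w'=s_\b$ with $\<\mu',\b\>=1$ (since $\tw\in\Omega_J$), contradicting $\<\mu',\a-w'(\a)\>=0$.
\item[(c)] If $\a-w'(\a)\in\Phi^+$ then ${w'}\i(\a-w'(\a))>0$. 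This uses $\a-w'(\a)\in z(\Phi_J)$ together with $\<\mu',\a-w'(\a)\>=0$ to force $\a-w'(\a)\in z(\Phi_K^+)$, and then $w'{}\i z(\Phi_K^+)\subseteq\Phi^+$ because $w=w_Kw_J$.
\end{itemize}
Fact (b) is what lets you drop the $G_2$ case entirely; fact (c) is what kills the residual $B_2$--type possibility (where $\a,w'(\a)$ are orthogonal of equal length with $\a\pm w'(\a)\in\Phi$): from (c) one gets ${w'}\i(\a)>0$, contradicting (2). Your proposal flags ``the extra commutator terms in types $B_2$ and $G_2$'' as the obstacle but offers no mechanism to dispose of them; conditions (1)--(4) alone do not suffice to do so for a generic $t^{\mu'}w'$, and this is exactly where the hypothesis $\tw'=z\tw z\i$ with $\tw$ short must enter. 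Until you insert arguments playing the role of (b) and (c), the proof is incomplete.
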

\begin{proof}
Note that $\mu'=z(\mu)$ and $w'=z w z\i=z w_K w_J z\i$. We have

(a) $U_\a(y_1) \tw' \in I \tw' I$ if $s_\a \tw' < \tw'$ and $U_\a(y_1) \tw' \in I s_\a \tw' I$ otherwise for any $y_1 \in \co_L^\times$.

(b) If $\a \neq w'(\a)$ and $\<\mu', \a-w'(\a)\>=0$, then $|\SS_0| \ge 3$ and hence $\Phi \cap (\ZZ \a + \ZZ w'(\a))$ is not of type $G_2$.

Now we prove (b). Assume otherwise, that is, $|\SS_0| \le 2$. Since $w'(\a) \neq \a$, we have $J \neq \emptyset$ and hence $|J|=1$ (since $J \neq \SS_0$). In particular, $w'=s_\b$ for some $\b \in z (\Phi_J^+)$ with $\<\mu', \b\>=1$ (since $\tw \in \Omega_J$). So $\a-w'(\a) \in \ZZ \b -\{0\}$ and $\<\mu', \a-w'(\a)\> \neq 0$, a contradiction. Therefore (b) is proved.

($\Leftarrow$) We assume $\a$ is not $\tw'$-permissible and show (1), (2), (3) and (4) are satisfied.

Suppose $\<\mu', \a\> \neq 0$. If $\<\mu', \a\> \le -1$, then for $y_1, y_2 \in \co_L^\times$ we have $$U_\a(y_1) \tw' U_\a(y_2) \in \tw' U_\a(y_2) I \subseteq I \tw' I \cup I \tw' s_\a I.$$ If $\<\mu', \a\> \ge 1$, then \begin{align*} U_\a(y_1) \tw' U_\a(y_2) & \in  I s_\a U_{-\a} (y_1\i) t^{\mu'} w' U_\a(y_2) \\ & \subseteq  I s_\a \tw' U_{-{w'}\i(\a)}(t^{\<\mu', \a\>} y_1\i) U_\a(y_2) \\ & \subseteq I s \tw' U_\a(y_2) I \\ &\subseteq I s_\a \tw' I \cup I s_\a \tw' s_\a I, \end{align*} which contradicts our assumption. So $\<\mu', \a\> =0$. Similar argument also shows that $\<\mu', w'(\a)\>=0$. Therefore, (1) is verified.

Since $\a$ is not $\tw'$-permissible, by (1) and (a) we have $w'(\a) \neq \a$. Applying (b) we have

(b') $\Phi \cap (\ZZ \a + \ZZ w'(\a))$ is not of type $G_2$.

We claim that

(c) if $\a-w'(\a) \in \Phi^+$ (resp. $\a-{w'}\i(\a) \in \Phi^+$), then ${w'}\i(\a-w'(\a)) > 0$ (resp. $w'(\a-{w'}\i(\a)) > 0$).

Indeed, since $\a-w'(\a) \in z(\ZZ \Phi_J)$ and $\<\mu', \a-w'(\a)\>=0$, we deduce that $\a-w'(\a) \in z(\Phi_J^+)$ and hence $\a-w'(\a) \in z(\Phi_K^+)$. Then $\a-{w'}\i(\a) \in \Phi^+$ follows from the observation that ${w'}\i z(\Phi_K^+) \subseteq \Phi^+$. The other statement follows in the same way by replacing the triple $(\mu', w', K)$ with $(-w^{\prime-1}(\mu'), {w'}\i, w\i(K))$. So (c) is proved.

Moreover, we have

(d) If $w'(\a) > 0$ (resp. ${w'}\i(\a) > 0$), then $\a-w'(\a) \in \Phi^+$ (resp. $\a-{w'}\i(\a) \in \Phi^+$).

Without loss of generality, we assume $w'(\a) > 0$. By (1), (b') and the fact that $\a \neq \pm w'(\a)$ are of the same length, one computes that \begin{align*} U_\a(y_1) \dot \tw' U_\a(y_2) &\in  U_\a(y_1) U_{-w'(\a)} (c y_2\i) \dot \tw' \dot s_\a I \\ &\subseteq \begin{cases} I U_{\a-w'(\a)}(c y_1 y_2\i) U_\a(y_1) \dot \tw' \dot s_\a I, & \text{ if } \a-w'(\a) \in \Phi; \\ I U_\a(y_2) \dot \tw \dot s_\a I, & \text{ otherwise.} \end{cases} \end{align*} for some constant $c \in \co_L^\times$. So $\a-w'(\a) \in \Phi^+$ (since $\a$ is not $\tw'$-permissible) and (d) is proved.

Suppose $w'(\a) > 0$. By (d) and (c), ${w'}\i(\a-w'(\a)) > 0$ and hence ${w'}\i(\a)=\a+{w'}\i(\a-w'(\a)) > 0$. Applying (d) again, we have $\a-{w'}\i(\a) > 0$, a contradiction. So $w'(\a) < 0$. Similarly, ${w'}\i(\a)<0$ and (2) is verified.

By (1) and (2), we have \begin{align*} U_\a(y_1) \dot \tw' U_\a(y_2) &= U_\a(y_1) U_{w'(\a)}(c' y_2) \dot \tw' \\ &\in \begin{cases} I U_{\a+w'(\a)}(c' y_1 y_2) U_\a(y_1) \dot \tw' I, &\text{ if } \a+w'(\a) \in \Phi; \\ I U_\a(y_1) \dot \tw' I, &\text{ otherwise } \end{cases} \end{align*} for some constant $c' \in \co_L^\times$. Thus $\a+w'(\a) \in \Phi^+$. Similarly, we have $\a+{w'}\i(\a) \in \Phi^+$ and (3) is verified.

Suppose $\<\a^\vee, w'(\a)\> \ge 0$. Then, by (b'), $\Phi \cap (\ZZ \a + \ZZ w'(\a))$ is of type $B_2$ since $\a$ and $w'(\a)$ are of the same length and $\a+w'(\a) \in \Phi^+$. In particular, $\<\a^\vee, w'(\a)\> =0$ and $\a-w'(\a) \in \Phi^+$. Thus, by (c) we have ${w'}\i(\a)=\a+{w'}\i(\a-w'(\a)) > 0$, which contradicts (2). So $\<\a^\vee, w'(\a)\> \le -1$ and hence $\<\a^\vee, w'(\a)\>=-1$ (since $\a \neq \pm w'(\a)$ are of the same length). Therefore, (4) is verified.

($\Rightarrow$) Assume (1), (2), (3) and (4) hold. One deduces that $\a+w'(\a) \in \Phi^+$, $\tw' < s_{\a+w'(\a)} \tw'$ and $\Phi \cap (\ZZ(\a) + \ZZ(w'(\a)))$ is of type $A_2$. Thus, for $y_1, y_2 \in \co_L^\times$ we have $$U_\a(y_1) \dot \tw' U_\a(y_2) \in I U_{\a+w'(\a)}(c' y_1 y_2) \dot \tw' I \subseteq I \dot s_{\a+w'(\a)} \dot \tw' I.$$ Since $s_{\a+w'(\a)} \tw' \notin \{\tw' s_\a, s_\a \tw'\}$, $\a$ is not $\tw'$-permissible as desired.
\end{proof}

\section{Simply laced Dynkin diagrams} \label{s-l}
In this section we prove Proposition \ref{hyp'} for simply laced root system $\Phi$. Let $\l, \mu, J, \tw, w$ be as in Section \ref{proof-main}. Fix $1 \neq z \in W_0^J$.

\begin{lem}\label{seq}
Let $\a, \b \in \SS_0 - J$ such that

(i) $\<\mu, \a\> \ge 0$ and $\<\mu, \b\>=-1$;

(ii) $\<\mu, \d_i\> \ge 0$ for $2 \le i \le m$, where $\b=\d_1, \d_2, \dots, \d_m=\a$ is a geodesic (shortest path) in the Dynkin diagram of $\SS_0$.

If $\mu+\a^\vee$ is not weakly dominant, then $\sum_{k=2}^{m-1} \<\mu, \d_k\> \le 1$ and one of the following cases occurs:

(1) $\mu + \d_m^\vee + \dots + \d_1^\vee$ is weakly dominant;

(2) $\SS_0$ is of type $E_8$, $\mu=k \o_1^\vee+\o_3^\vee-\o_4^\vee+\o_6^\vee$ (resp. $\mu=\o_1^\vee-\o_4^\vee+\o_5^\vee+k\o_8^\vee$) with $k \in \ZZ_{\ge 0}$, $\a=\a_1$ (resp. $\a=\a_8$), $\b=\a_4$, $J_{\nu_{\tw}}=\SS_0-\{\a, \b\}$ and $J =J_{\nu_{\tw}}-\{\a_2\}$. In this case, $\mu + \d_m^\vee + \dots + \d_1^\vee +\xi_1^\vee + \xi_2^\vee + \b^\vee$ is weakly dominant, where $\{\xi_1, \xi_2\}=\{\a_2, \a_5\}$ (resp. $\{\xi_1, \xi_2\}=\{\a_2, \a_3\}$).

(3) $\SS_0$ is of type $E_8$, $\mu=\o_1^\vee - \o_5^\vee + \o_6^\vee + k \o_8^\vee$ with $k \in \ZZ_{\ge 0}$, $\a=\a_8$, $\b=\a_5$ and $J_{\nu_{\tw}}=J=\SS_0-\{\a_5, \a_8\}$. In this case, $\mu + \d_m^\vee + \dots + \d_1^\vee + \e +\xi_1^\vee + \xi_2^\vee + \e^\vee + \b^\vee$ is weakly dominant, where $\e=\a_4$ and $\{\xi_1, \xi_2\}=\{\a_2, \a_3\}$.

Here, in (2) and (3), we use the labeling of the type $E_8$ Dynkin diagram as in \cite{Hum} by the integers $1 \le i \le 8$, and $\a_i$ and $\o_i^\vee$ denote the corresponding simple root and fundamental coweight respectively.
\end{lem}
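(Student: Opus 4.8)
The plan is to reduce Lemma~\ref{seq} to a statement about a single positive root and then run a type-by-type analysis in which $E_8$ is the only source of genuine exceptions. Since the Dynkin diagram of $\SS_0$ is a tree, the geodesic $\b=\d_1,\dots,\d_m=\a$ is an \emph{induced} path, so $\{\d_1,\dots,\d_m\}$ spans a root subsystem of type $A_m$ with highest root $\g:=\d_1+\dots+\d_m\in\Phi^+$; since $\Phi$ is simply laced and $\d_1,\dots,\d_m$ form a chain, each partial sum $\d_1+\dots+\d_j$ is a root whose coroot is $\d_1^\vee+\dots+\d_j^\vee$, so $\g^\vee=\d_m^\vee+\dots+\d_1^\vee$ and the conclusion of case~(1) is exactly that $\mu+\g^\vee$ is weakly dominant. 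First I would compute, for $\eta\in\Phi^+$, the pairing $\<\g^\vee,\eta\>$: writing $c_\xi(\eta)$ for the coefficient of a simple root $\xi$ in $\eta$ and summing the identity $\<\d_i^\vee,\eta\>=2c_{\d_i}(\eta)-\sum_{\xi\sim\d_i}c_\xi(\eta)$ over $1\le i\le m$, the path-internal contributions telescope and one is left with
\[
\<\g^\vee,\eta\>=c_\a(\eta)+c_\b(\eta)-\sum_{\rho'}c_{\rho'}(\eta),
\]
where $\rho'$ ranges over the neighbours of the path $\d_1,\dots,\d_m$ that do not lie on it. In type $A$, or whenever the geodesic misses the branch vertex, that sum is empty, so $\<\g^\vee,\eta\>\ge0$ and hence $\<\mu+\g^\vee,\eta\>\ge\<\mu,\eta\>\ge-1$; thus case~(1) holds outright. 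Consequently $\mu+\g^\vee$ can fail to be weakly dominant only when $\Phi$ is of type $D$ or $E$ and the geodesic runs through the branch vertex $\rho$ (with off-path neighbour $\rho'$), and then only at roots $\eta$ with $\<\mu,\eta\>=-1$ and $c_{\rho'}(\eta)\ge c_\a(\eta)+c_\b(\eta)+1$.

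Next I would show that the existence of such an $\eta$, together with the standing hypotheses, forces $\Phi=E_8$ and one of the shapes of $\mu$ listed in (2) and (3). The crucial input is that $\mu$ is $J_{\nu_{\tw}}$-dominant and $J_{\nu_{\tw}}$-minuscule, so $\<\mu,\zeta\>\in\{0,1\}$ for every $\zeta\in\Phi_{J_{\nu_{\tw}}}^+$; this prevents $\mu$ from pairing to $-1$ with a root $\eta$ carrying a large coefficient at the branch-arm node $\rho'$ unless $\rho'$ and its neighbours lie outside $J_{\nu_{\tw}}$, which in turn severely constrains $J_{\nu_{\tw}}$ and $J$ (recall $J\subseteq J_{\nu_{\tw}}$, $\a,\b\notin J$, and conditions (i), (ii)). A direct inspection of the diagrams then eliminates $D_n$, $E_6$ and $E_7$, and in $E_8$ pins down the admissible pairs $(J_{\nu_{\tw}},J)$ together with $\mu$ up to the coefficient $k$ of the relevant central fundamental coweight — which does not affect weak dominance — yielding precisely the two families in (2) and (3). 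The inequality $\sum_{k=2}^{m-1}\<\mu,\d_k\>\le1$ is recorded along the way: the subpath sum $\d_2+\dots+\d_{m-1}$ is again a root, weak dominance applied to the roots $\d_1+\dots+\d_j$ ($2\le j\le m$) controls the partial sums from below, and the remaining room is closed off by the same constraints, the value $1$ being attained exactly in the long $E_8$ chains of (2) and (3).

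Finally, I would carry out the explicit verification in the two $E_8$ cases. For each displayed $\mu$ one checks, again via the telescoping formula, that $\mu+\g^\vee$ still fails weak dominance at a single identifiable positive root of $E_8$, and that adjoining the further coroots — $\xi_1^\vee+\xi_2^\vee+\b^\vee$ in case~(2) and $\e^\vee+\xi_1^\vee+\xi_2^\vee+\e^\vee+\b^\vee$ in case~(3) — raises the pairing with every positive root to at least $-1$; since each added coroot alters only finitely many pairings, this reduces to a short explicit check. The hard part will be exactly this $E_8$ bookkeeping: both ruling out the superficially similar near-misses in $D_n$, $E_6$ and $E_7$ and the final positivity checks in $E_8$ are unavoidably case-by-case, which is why — as with the other simply-laced combinatorics in this paper — one ends up leaning on explicit root-system data (the verifications of that kind being collected in the Appendix). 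Away from branch vertices, by contrast, everything follows cleanly from the telescoping identity together with hypotheses (i) and (ii).
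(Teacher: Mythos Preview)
Your telescoping identity
\[
\<\g^\vee,\eta\>=c_\a(\eta)+c_\b(\eta)-\sum_{\rho'}c_{\rho'}(\eta)
\]
is correct, but the conclusion you draw from it is not. The sum over $\rho'$ runs over \emph{all} off-path neighbours of the geodesic, and that includes the neighbours of the endpoints $\a=\d_m$ and $\b=\d_1$ that happen to lie outside the path. So in type $A_n$, with $\b=\a_i$ and $\a=\a_j$ say, the off-path neighbours $\a_{i-1}$ and $\a_{j+1}$ (when they exist) contribute, and the sum is almost never empty. Concretely, for $\eta=\a_{j+1}$ one gets $\<\g^\vee,\eta\>=-1$, so your inequality $\<\mu+\g^\vee,\eta\>\ge\<\mu,\eta\>$ fails exactly when it matters. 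The same remark applies to the ``geodesic misses the branch vertex'' case: the branch vertex may well be an off-path neighbour of an endpoint of the geodesic. In short, $\<\g^\vee,\eta\>\ge0$ does \emph{not} hold in general, and the reduction to ``large $c_{\rho'}(\eta)$ at a branch arm'' collapses.

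What is actually needed, once one sees that $\<\g^\vee,\eta\>=-1$ is unavoidable, is to show that for those $\eta$ one has $\<\mu,\eta\>\ge0$. This is not a formal consequence of weak dominance of $\mu$; it uses the specific structure of $\mu$ coming from the short element $\tw$ (namely that $\nu_{\tw}=\mu-\sum_H\mu|_H$ is dominant), which is exactly the content of Lemma~\ref{plus}. The paper organises this via the criterion~(c) and Lemmas~\ref{criterion}, \ref{plus-simple}, \ref{plus}: in type $A$ (and in most type $D$ subcases) one shows directly that $\mu+\a^\vee$ is already weakly dominant, so the hypothesis of Lemma~\ref{seq} is vacuous there; the remaining type $D$ subcase and the $E$ types are handled separately, with the $E$ case reduced to finitely many ``minimal'' $\mu_c$'s via the forgetful map $(J_{\nu_{\tw}},\mu,J,\a,\b)\mapsto(J_{\nu_{\tw}},\mu|_{J_{\nu_{\tw}}},\a,\b)$ and then checked. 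Your argument for the bound $\sum_{k=2}^{m-1}\<\mu,\d_k\>\le1$ also does not go through as stated: weak dominance of $\mu$ on the partial sums $\d_1+\dots+\d_j$ gives a \emph{lower} bound on $\sum_{k=2}^{j}\<\mu,\d_k\>$, not the required upper bound; in the paper this inequality is a by-product of the case analysis (and in the $E$ cases, of the observation that when $\sum\ge2$ one can run $\mu+\g^\vee\to_-\mu+\a^\vee$ and invoke Lemma~\ref{ind}(4) to force $\mu+\a^\vee$ weakly dominant, again making the hypothesis vacuous).
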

The proof is given in \S \ref{prf-seq}.

\

Now we define \begin{align*}\Xi_1^+(\mu)&=\{\a \in \Phi^+ - \Phi_J; \<\mu, \a_J\> =-1\} \\ \Xi^+(\l, \mu)&=\{\a \in \Phi^+; \mu+\a_J^\vee \preceq \l\} \supseteq \Xi_1^+(\mu).\end{align*}
\begin{lem}\label{empty}
Assume $\Phi$ is simply laced. Then for each nonempty subset $D \subseteq \Xi_1^+(\mu)$ there exists $\a \in D_{\max, J}$ such that one of the following statements fails:

(1') $\<\mu, \a\>=\<\mu, w(\a)\>=0$;

(2') $w(\a), w\i(\a) \notin D$;

(3') $\a+w\i(\a) \in \Phi - \Xi_1^+(\mu)$;

(4') $w\i(\a)-\e \in D$ if $w\i(\a)-\e, \a+\e \in \Phi$ for some $\e \in \Phi_J^+$;

(5') $\a' \in D$ if $\a' \in \Phi^+$ and $\a' \le_J \a$.
\end{lem}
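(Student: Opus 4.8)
The plan is to argue by contradiction: assume that for every $\a\in D_{\max,J}$ all five conditions (1')--(5') hold, and extract a contradiction from the structure of $\mu$, of $w=w_K w_J$, and of the simply-laced root system $\Phi$. Throughout I use that $\mu$ is $J$-dominant and $J$-minuscule, that $K$ is the set of simple roots in $J$ orthogonal to $\mu$ (so $\Phi_K=\{\b\in\Phi_J;\<\mu,\b\>=0\}$), and that $w=w_K w_J\in W_J$, so that $w(\a)$ and $w\i(\a)=w_J w_K(\a)$ lie in the $W_J$-orbit of $\a$; in particular $\Xi_1^+(\mu)=\{\a\in\Phi^+-\Phi_J;\<\mu,\a_J\>=-1\}$ is a union of full $W_J$-orbits.

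First I would fix $\a\in D_{\max,J}$ and read off the numerical data. Since $W_J$ preserves $\Phi^+-\Phi_J$ and $\a_J\le_J\a$, condition (5') gives $\a_J\in D\subseteq\Xi_1^+(\mu)$, so $\<\mu,\a_J\>=-1$, whereas $\<\mu,\a\>=0$ by (1'). As $\a$ restricts to a $J$-minuscule $W_J$-weight, a saturated $W_J$-chain from $\a_J$ to $\a$ has the form $\a_J=\b_0,\dots,\b_m=\a$ with $\b_i=\b_{i-1}+\g_i$, $\g_i\in\SS_0\cap J$, $\<\g_i^\vee,\b_{i-1}\>=-1$; by $J$-dominance and $J$-minusculity of $\mu$ each $\<\mu,\g_i\>\in\{0,1\}$ while $\sum_i\<\mu,\g_i\>=1$, so this chain crosses exactly one wall on which $\mu$ takes value $1$. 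Next, since $w^{\pm1}(\a)\in W_J\a\subseteq\Phi^+-\Phi_J$, condition (2') together with (5') forces $w(\a)\not\le_J\a$ and $w\i(\a)\not\le_J\a$; together with (1') (which also gives $\<\mu,w(\a)\>=0$) and the description of $w_J,w_K$ as longest elements, this constrains $w^{\pm1}(\a)$ within the $W_J$-orbit of $\a$ and turns (3')--(4') into concrete incidence statements coupling $\a$ to $w\i(\a)$ through simple roots of $J$.

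Then I would run the case analysis. If $\a$ is not $J$-dominant there is $\g\in\SS_0\cap J$ with $\<\g^\vee,\a\>=-1$, so $s_\g(\a)=\a+\g>_J\a$ lies in $\Xi_1^+(\mu)\setminus D$ by maximality; pushing this up towards $\a^J$ and comparing with the forced positions of $w^{\pm1}(\a)$ and with (3')--(4') should be contradictory outside a short list of configurations. When $\a$ is $J$-dominant, I would use (3') --- that $\a+w\i(\a)$ is a root lying outside $\Xi_1^+(\mu)$, although both summands pair to $0$ with $\mu$ and lie in the same $W_J$-orbit --- to constrain the non-$J$-part of $\a_J$; combined with (4') this should either force a positive root $\a'\le_J\a$ with $\<\mu,\a'_J\>\ne-1$, contradicting (5') and $D\subseteq\Xi_1^+(\mu)$, or lead to a configuration impossible in a simply-laced diagram. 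For the last alternative I would invoke Lemma \ref{elementary} to bound coefficients of roots at a trivalent node together with the case-by-case combinatorial facts proved in the Appendix, handling $A_n$, $D_n$, $E_6$, $E_7$, $E_8$ separately; the exceptional $E_8$ situations of Lemma \ref{seq}(2),(3) are precisely those requiring individual checks.

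The hard part is this last step. Conditions (1')--(4') encode that $\a$ is ``not usable'', and the lemma asserts that a $\le_J$-downward closed subset of $\Xi_1^+(\mu)$ cannot consist entirely of such roots; but although $w\i(\a)$ lies in the $W_J$-orbit of $\a$, the element $w_J w_K$ mixes the $K$- and $J$-structures, so the coupling of $\a$ with $w\i(\a)$ in (3')--(4') is sensitive to the global shape of $\Phi$ and does not localize to a single parabolic. I expect no uniform argument, and the bulk of the work --- deferred to the Appendix --- to be the type-by-type verification that this coupling is incompatible with (1'), (2') and (5').
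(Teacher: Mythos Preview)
Your overall strategy --- assume all five conditions hold for every $\a\in D_{\max,J}$ and derive a contradiction by a type-by-type analysis --- matches the paper. But you are missing the paper's key simplifying reduction, and without it the case analysis you sketch would be far more unwieldy than you seem to anticipate. Before any casework, the paper replaces $(D,\Phi)$ by $(D'',\Phi'')$, where $\Phi''$ is the root system spanned by $\Phi_J$ together with a single element $\eta\in D$; this reduces to the situation $J=\SS_0-\{\b\}$ for one simple root $\b$, with $D$ contained in a single $W_J$-coset $\b+\ZZ\Phi_J$. From (3') and (1') one then reads off the concrete constraints $\<\mu,\b\>=-1$, $\vartheta_{2\b}\in\Phi$, and $\<\mu,\vartheta_{2\b}\>=0$, and the subsequent casework for types $A$, $D$, $E$ is organized around the position of $\b$ and the decomposition $\g=\b+\sum_H{}^H\g$ over the connected components $H$ of $J$. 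Your proposed dichotomy ``$\a$ $J$-dominant versus not'' does not play this organizing role in the paper's argument.

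There is also a confusion in your last paragraph: the exceptional $E_8$ configurations of Lemma~\ref{seq}(2),(3) belong to the \emph{other} branch of the proof of Proposition~\ref{hyp'} (the case $\Xi^+(\l,\mu)\cap z\i(\Phi^-)=\emptyset$, treated in \S\ref{simply2}), not to the proof of Lemma~\ref{empty}. The Appendix section for Lemma~\ref{empty} does perform the $E_n$ checks case-by-case, but these are governed by the finitely many pairs $(\mu,\b)$ satisfying the reduced conditions (i) and (ii), not by the data of Lemma~\ref{seq}. So while your high-level plan is correct, the substance of the argument --- the reduction to a single $\b$ and the resulting classification --- is absent from your proposal.
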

The proof is given in \S \ref{prf-empty}.

\subsection{}\label{hyp1} First we show Proposition \ref{hyp'} holds if $$D':=\Xi^+(\l, \mu) \cap z\i(\Phi^-) \neq \emptyset.$$ We claim that there exist $\d \in D'_{\max, J}$ such that $z \geq z s_\d \in W_0^J$ and $z \overset {-z(\d)} \longrightarrow z s_{\d}$. Let $\g \in D'_{\max, J}$. Then $z \leq z s_\g \in W_0^J$ (by Lemma \ref{f3}) and $\mu+\g_J^\vee \preceq \l$. By Corollary \ref{f4} we have $z \tw s_\g z\i, z s_\g \tw z\i \in \Adm(\l)$. Thus, to prove the claim, it suffices to show in $-z(D'_{\max, J}) \subseteq \Phi^+$ there exists a $z \tw z\i$-permissible root. Assume otherwise, that is, the four statements of Lemma \ref{bound} holds for each root in $-z(D'_{\max, J})$. We show this will lead to a contradiction.

First we show $D' \subseteq \Xi_1^+(\mu)$. Let $\g \in D'_{\max, J}$. By Lemma \ref{bound} (1) we have $0=\<\mu, w(\g)\>=\<\mu, \g\> \ge \<\mu, \g_J\>$. If $\g \notin \Xi_1^+(\mu)$, then $\<\mu, \g_J\> \ge 0$ (since $\mu$ is weakly dominant) and hence $\<\mu, \g_J\>=0$. By Lemma \ref{shrink}, $w(\g) \le_J w\i(w(\g))=\g$ and hence $z(w(\g)) \le z(\g) < 0$, which contradicts to Lemma \ref{bound} (2). So $\g \in \Xi_1^+(\mu)$ and hence $D' \subseteq \Xi_1^+(\mu)$.

Fix a maximal element $\g_0 \in D'$ with respect to $\le^J$ (see \S\ref{parabolic}). Define $$\emptyset \neq D=\{\g \in D'; \g-\g_0 \in \ZZ \Phi_J\} \subseteq \Xi_1^+(\mu).$$ We show each $\a \in D_{\max, J}$ satisfies the five conditions of Lemma \ref{empty}, which is a contradiction. Indeed, one checks that (1') and (2') follows directly from (1) and (2) of Lemma \ref{bound} respectively. Moreover, (5') follows from the inclusion $z \in W_0^J$ and the equality $(\a')_J=\a_J$ (by Lemma \ref{f5}). By Lemma \ref{bound} (3), we have $\a + w\i(\a) \in z\i(\Phi^-)$. Due to the choice of $\g_0 \in D'$, we have $\a+w\i(\a) \notin D'$, that is, $\a+w\i(\a) \notin \Xi^+(\l, \mu)$. In particular, $\a+w\i(\a) \notin \Xi_1^+(\mu)$ and (3') follows. It remains to verify (4'). Let $\e \in \Phi_J^+$ such that $w\i(\a)-\e, \a+\e \in \Phi$. Then $z(\a+\e)>0$ since $\a \in D_{\max, J}$. Noticing that $z(\a+w\i(\a)) < 0$, we deduce that $z(w\i(\a)-\e)=z(\a+w\i(\a)) - z(\a+\e) <0$. So $w\i(\a)-\e \in D$ and (4') follows.

\subsection{}\label{simply2} Now we assume $D'=\Xi(\l, \mu) \cap z\i(\Phi^-)=\emptyset$. Let $D_1=\{\a \in \SS_0; z(\a)< 0\} \subseteq \SS_0 - J$ and $D_2=\{\b \in \SS_0; \<\mu, \b\>=-1\}$. Then $D_1 \neq \emptyset$ since $z$ is nontrivial. Moreover, $D_1 \cap D_2 \subseteq D' =\emptyset$. Let $\a \in D_1$. We have $\mu+\a^\vee \le \l$ by Lemma \ref{add-simple}. If $\mu+\a^\vee$ is weakly dominant, then $\mu + \a^\vee \preceq \l$ (by Lemma \ref{ind}) and hence $\emptyset \neq D_1 \subseteq D'$, a contradiction. Thus $\mu+\a^\vee$ is not weakly dominant. In particular, by \cite[Lemma 6.5]{N2} and that $\mu$ is weakly dominant, $\mu$ is not dominant and hence $D_2 \neq \emptyset$. Let $\a \in D_1$ and $\b \in D_2$ such that $\dist(\a, \b)=\dist(D_1, D_2)$ (see the end of \S \ref{chase}). Therefore, one of (1), (2) and (3) in Lemma \ref{seq} occurs. By the choices of $\a$ and $\b$, we have $\<\mu, \d_j\> \ge 0$ for $2 \le j \le m-1$ and $z(\xi_1), z(\xi_2), z(\e) > 0$.

If Lemma \ref{seq}(1) occurs, set $\th=\d_m + \dots + \d_1$, $n=m$ and $\eta_j=\d_j$ for $1 \le j \le m$. If Lemma \ref{seq}(2) occurs, set $\th=\d_m + \dots + \d_1 +\xi_1 + \xi_2 + \b$, $n=m+3$, $(\eta_1, \eta_2, \eta_3)=(\b, \xi_1, \xi_2)$ and $\eta_{j+3}=\d_j$ for $1 \le j \le m$. If Lemma \ref{seq}(3) occurs, set $\th=\d_m + \dots + \d_1 + 2\e +\xi_1 + \xi_2 + \b$, $n=m+5$, $(\eta_1, \eta_2, \eta_3, \eta_4, \eta_5)=(\b, \e, \xi_1, \xi_2, \e)$ and $\eta_{j+5}=\d_j$ for $1 \le j \le m$. Then $z(\eta_i) > 0$ for $1 \le i \le n-1$. In all cases, we have $s_{\eta_i} \cdots s_{\eta_{n-1}} (\eta_n)=\eta_n + \cdots + \eta_i$ for $1 \le i \le n$. Moreover, by Lemma \ref{seq} one checks directly that $\mu+\a^\vee \to_+ \mu+\th^\vee$ (see \S \ref{chase}). Combining Lemma \ref{ind} with Lemma \ref{seq}, we deduce that $\mu+\th^\vee \le \l$ and hence $\mu+\th^\vee \preceq \l$. In particular, $z(\th)>0$ since $D'=\emptyset$. Let $1 \le i_0 \le n-1$ be the minimal integer such that $$z(\eta_n + \cdots + \eta_{i_0+1})=z(s_{\eta_{i_0+1}} \cdots s_{\eta_{n-1}}(\eta_n)) < 0.$$ Set $\d=\eta_n+\cdots+\eta_{i_0+1} \in \Phi^+$. By exchanging $\xi_1$ and $\xi_2$ in Lemma \ref{seq} if necessary, we may and do further assume that

\

$(\ast)$ {\it in Lemma \ref{seq} (3) (resp. Lemma \ref{seq} (2)), $z(\d+\eta_{i_0}), z(\d+\eta_{i_0-1}) > 0$ if $i_0=4$ (resp. if $i_0=3$).}

\

Let $\g \in \Phi^+$ be a maximal $W_J$-conjugate of $\d$ such that $\g \ge_J \d$ and $z(\g)<0$ (or equivalently, $z s_\g \leq z$). By Lemma \ref{f3}, we have $z s_\g \in W_0^J$.

For $\phi \in \ZZ\Phi$ (resp. $u \in W_0$) we denote by $\supp(\phi)$ (resp. $\supp(u)$) the unique minimal subset $E \subseteq \SS_0$ such that $\phi \in \ZZ\Phi_E$ (resp. $u \in W_E$). Notice that $\supp(\phi)$ is a connected subset of $\SS_0$ if $\phi$ is a root.
\begin{lem} \label{equal}
We have the following properties:

(a) $\eta_{i_0} \notin \supp(\g-\d)$;

(b) $\<\eta_i^\vee, \g\>=\<\eta_i^\vee, \d\>$ for $1 \le i \le i_0$.

As a consequence, $s_{\eta_i} \cdots s_{\eta_{i_0}}(\g)=\g+\eta_{i_0} + \cdots + \eta_i$ for $1 \le i \le i_0$.
\end{lem}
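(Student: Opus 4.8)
The plan is to reduce the lemma to a single combinatorial statement about where $\supp(\g-\d)$ sits in the Dynkin diagram of $\SS_0$, to check that (a), (b) and the ``As a consequence'' assertion all follow from that statement, and to prove the statement by a case analysis along the trichotomy of Lemma~\ref{seq}. The framing facts are elementary: since $\g$ is a $W_J$-conjugate of $\d$ with $\g\ge_J\d$, the difference $\g-\d$ is a nonnegative integral combination of positive roots of $\Phi_J$, so $\supp(\g-\d)\subseteq J$ and $\supp(\g)=\supp(\d)\cup\supp(\g-\d)$; consequently, for a simple root $\b\notin\supp(\g-\d)$ one has $\langle\b^\vee,\g-\d\rangle\le0$, with equality exactly when $\b$ is not adjacent to $\supp(\g-\d)$.

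Everything then follows from the claim that, for $1\le i\le i_0$, the set $\supp(\g-\d)$ contains neither $\eta_i$ nor any neighbour of $\eta_i$. Granting this, (a) is immediate. For (b) with $i=i_0$, the identity $s_{\eta_i}\cdots s_{\eta_{n-1}}(\eta_n)=\eta_n+\cdots+\eta_i$ of \S\ref{simply2} gives $s_{\eta_{i_0}}(\d)=\d+\eta_{i_0}$, hence $\langle\eta_{i_0}^\vee,\d\rangle=-1$; since $\eta_{i_0}\notin\supp(\g-\d)$ we get $\langle\eta_{i_0}^\vee,\g\rangle\le-1$, which by the simply-laced bound forces $\langle\eta_{i_0}^\vee,\g\rangle=-1=\langle\eta_{i_0}^\vee,\d\rangle$. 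For $1\le i<i_0$ the claim gives $\langle\eta_i^\vee,\g-\d\rangle=0$ at once. Finally, the ``As a consequence'' part is a descending induction on $i$: the same identity shows $\langle\eta_i^\vee,\d+\eta_{i_0}+\cdots+\eta_{i+1}\rangle=-1$, and by (b) this pairing is unchanged when $\d$ is replaced by $\g$, so $s_{\eta_i}(\g+\eta_{i_0}+\cdots+\eta_{i+1})=\g+\eta_{i_0}+\cdots+\eta_i$.

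To prove the claim I would play two facts against each other. First, maximality of $\g$: if $\e\in\Phi_J^+$ satisfies $\langle\e^\vee,\g\rangle=-1$, then $s_\e(\g)=\g+\e$ is again a $W_J$-conjugate of $\d$ lying strictly above $\g$ for $\le_J$, so $z(\g+\e)>0$ whereas $z(\g)<0$. Second, minimality of $i_0$ together with $z(\th)>0$ (valid because $\mu+\th^\vee\preceq\l$ and $D'=\emptyset$): the roots $\d+\eta_{i_0}$, $\d+\eta_{i_0}+\eta_{i_0-1}$, \dots, $\th=\d+\eta_{i_0}+\cdots+\eta_1$ all have strictly positive $z$-image. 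In the generic case Lemma~\ref{seq}(1), where $\eta_1=\d_1,\dots,\eta_m=\d_m$ is a geodesic and $\d=\d_{i_0+1}+\cdots+\d_m$ is supported on its terminal segment, the shortest-path property already shows $\d_i$ ($i<i_0$) is not adjacent to $\supp(\d)$, and feeding the $z$-positivity of $\d+\eta_{i_0}+\cdots$ into the maximality of $\g$ rules out $\g$ acquiring any node of the segment $\eta_{i_0},\eta_{i_0-1},\dots$ or a neighbour of one. In the sporadic cases Lemma~\ref{seq}(2)--(3) the chain $\eta_\bullet$ doubles back through the trivalent node $\a_4$, one of $\xi_1,\xi_2$ (and, in (3), also $\e=\a_4$) lies in $J$, and this crude argument breaks down; here I would substitute the explicit $\mu$ from Lemma~\ref{seq}, use Lemma~\ref{f5} (and Lemma~\ref{elementary}) to pin down the $W_J$-orbit of $\d$, and invoke the normalization $(\ast)$, arranged by exchanging $\xi_1$ and $\xi_2$, which is precisely what prevents $\g$ from spilling onto $\xi_1$, $\xi_2$ or $\e$.

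The main obstacle is exactly this last point: establishing the claim in the two $E_8$ configurations of Lemma~\ref{seq}(2)--(3). There $\d$ genuinely involves the branch node with multiplicity and $\eta_\bullet$ is not a simple path, so the claim cannot be read off from the geodesic, and one is forced into an explicit diagram chase with the listed $\mu$, leaning on $(\ast)$ and Lemma~\ref{f5} to track the $W_J$-orbit of $\d$ and the sign of $z$ along it; the reductions above and the generic case are comparatively routine.
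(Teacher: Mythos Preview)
Your overall plan matches the paper's: reduce to a statement about $\supp(\g-\d)$, treat $i=i_0$ via the simply-laced bound, and handle the exceptional $E_8$ configurations of Lemma~\ref{seq}(2)--(3) by an explicit case check using $(\ast)$ and Lemma~\ref{elementary}. The reductions and the ``As a consequence'' part are fine.

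The gap is in your generic-case argument. You propose to ``feed the $z$-positivity of $\d+\eta_{i_0}+\cdots$ into the maximality of $\g$'' to rule out $\g$ acquiring the nodes $\eta_{i_0},\eta_{i_0-1},\dots$ or their neighbours. Maximality of $\g$ only tells you about roots of the form $\g+\e$ with $\e\in\Phi_J^+$; it says nothing about whether a given simple root lies in $\supp(\g)$. What actually pins down $\eta_{i_0}\notin\supp(\g-\d)$ is not maximality but the fact that $z\in W_0^J$: if $\eta_{i_0}\in\supp(\g-\d)\subseteq J$, then $\g-(\d+\eta_{i_0})\in\ZZ_{\ge 0}\Phi_J^+$, so $z(\g)-z(\d+\eta_{i_0})$ is a nonnegative sum of positive roots, contradicting $z(\g)<0<z(\d+\eta_{i_0})$. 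This argument does \emph{not} iterate to $\eta_{i_0-1},\eta_{i_0-2},\dots$: for $j<i_0$ the vector $\d+\eta_j$ is not a root (by the geodesic property $\eta_j$ is not adjacent to $\supp(\d)$), and $\g-(\d+\eta_{i_0}+\cdots+\eta_j)$ need not lie in $\ZZ_{\ge 0}\Phi_J^+$ once you already know $\eta_{i_0}\notin\supp(\g-\d)$.

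The paper closes this by a different, purely topological step you omit: once $\eta_{i_0}\notin\supp(\g-\d)$ and (in the generic case) $\eta_{i_0}\notin\supp(\d)$, one has $\eta_{i_0}\notin\supp(\g)$; since $\supp(\g)$ is connected it lies in the connected component of $\SS_0-\{\eta_{i_0}\}$ containing $\supp(\d)$, whereas each $\eta_i$ with $i<i_0$ lies in a different component. This immediately gives $\<\eta_i^\vee,\g\>=0=\<\eta_i^\vee,\d\>$ without any appeal to maximality or to the $z$-signs of the longer chains. Your sketch has the right ingredients (the $z$-sign of $\d+\eta_{i_0}$ and the geodesic structure) but misattributes the mechanism and is missing this connectedness step; as written, the Case~(1) argument does not go through for $i<i_0$.
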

\begin{proof}
Since $z(\g)<0$, $\d \le_J \g$ and $z(\d+\eta_{i_0})>0$, we have $\eta_{i_0} \notin \supp(\g-\d)$ and (a) follows. Moreover, $\<\eta_{i_0}, \g\> \le \<\eta_{i_0}, \d\>=-1$. Since $\eta_{i_0} \neq -\g$ and $\Phi$ is simply laced, we have

(c) $\<\eta_{i_0}^\vee, \g\>=-1=\<\eta_{i_0}^\vee, \d\>$.

Case(1): $\eta_i \notin \supp(\d)$ for $1 \le i \le i_0$. Then by (a) we have $\eta_{i_0} \notin \supp(\g)$. Thus $\supp(\g)$, which is connected, lies in the connected component of $\SS_0-\{\eta_{i_0}\}$ containing $\supp(\d)$. Noticing that $\{\eta_{i_0}, \dots, \eta_1\}$ is connected and $\eta_{i_0}$ is a neighbor of $\supp(\d)$, we deduce (by the requirement of Case(1)) that either $\eta_i =\eta_{i_0}$ or $\eta_i$ and $\supp(\d)$ are in different connected components of $\SS_0-\{\eta_{i_0}\}$. By (c), we only need to consider the latter case, where we have $\<\eta_i^\vee, \g\> = 0=\<\eta_i^\vee, \d\>$ and (b) is proved.

Case(2): Case(1) fails. So either (2) or (3) in Lemma \ref{seq} occurs. Without loss of generality, we assume the latter case occurs and hence $1 \le i_0 \le 5$. By (c), we can assume further $2 \le i_0 \le 5$. We claim $\g \ge_J \d$ is elementary (i.e. a sum of distinct simple roots) and $\supp(\g-\d) \subseteq \SS_0 - \supp(\d)$ is disconnected to $\eta_i$ (for $1 \le i \le i_0$). This will prove (b). If $i_0 \in \{3, 5\}$, by (a) we have $\eta_{i_0} \notin \supp(\g)$, which means $\supp(\g)$ lies in the connected component of $\SS_0 - \{\eta_{i_0}\}$ and hence is of type $A$. If $i_0=4$, by $(\ast)$ and (a) we have $\xi_1, \xi_2 \notin \supp(\g)$. So $\supp(\g)$ lies in the connected component of $\SS_0-\{\xi_1, \xi_2\}$, which is of type $A$. If $i_0=2$, by (a) the coefficient of $\e=\eta_{i_0}$ in $\g$ is one, which means $\g$ is elementary by Lemma \ref{elementary}.
\end{proof}

We define $z_i=z s_{\eta_i} \cdots s_{\eta_1}$ and $z_i'=z s_\g s_{\eta_i} \cdots s_{\eta_1}$ for $0 \le i \le i_0$.
\begin{lem} \label{mini}
For $1 \le i \le i_0$ we have (a) $z(\eta_i), z s_\g (\eta_i) > 0$ and (b) $z_i, z_i' \in W_0^J$.
\end{lem}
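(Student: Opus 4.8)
The plan is to prove (a) first — this is where the real content lies — and then to deduce (b) by a short induction in which the simple reflections are transported past one another.

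\emph{Part (a).} The inequality $z(\eta_i) > 0$ costs nothing: since $1 \le i \le i_0 \le n-1$, it was already recorded in \S\ref{simply2} that $z(\eta_j) > 0$ for $1 \le j \le n-1$. For $z s_\g(\eta_i) > 0$ I would begin from the identity
\[
z s_\g(\eta_i) = z(\eta_i) - \<\eta_i^\vee, \g\>\, z(\g),
\]
legitimate because $\Phi$ is simply laced, and then use Lemma \ref{equal}(b) to rewrite $\<\eta_i^\vee, \g\> = \<\eta_i^\vee, \d\>$, where $\d = \eta_n + \cdots + \eta_{i_0+1}$. If $\<\eta_i^\vee, \d\> \ge 0$ we are done at once: either $s_\g$ fixes $\eta_i$, so $z s_\g(\eta_i) = z(\eta_i) > 0$, or $z s_\g(\eta_i) = z(\eta_i) + (-z(\g))$ is a root equal to a sum of the two positive roots $z(\eta_i)$ and $-z(\g)$, hence positive. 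The remaining possibility is $\<\eta_i^\vee, \d\> = -1$, and by item (c) in the proof of Lemma \ref{equal} the index $i = i_0$ is always of this kind. For such an $i$ we have $s_\g(\eta_i) = \eta_i + \g = (\eta_i + \d) + (\g - \d)$; since $\g \ge_J \d$ the difference $\g - \d$ is a nonnegative combination of positive roots of $\Phi_J$, so $z \in W_0^J$ makes $z(\g - \d)$ a nonnegative combination of positive roots of $\Phi$, and it thus suffices to prove $z(\eta_i + \d) > 0$. When $\eta_i \in J$ this is immediate since $z s_\g \in W_0^J$ (Lemma \ref{f3}). In general it should be read off from positivity statements already available: $z(\eta_{i_0} + \d) = z(\eta_n + \cdots + \eta_{i_0}) > 0$ by the minimality of $i_0$ (with $z(\th) > 0$ in the boundary case $i_0 = 1$), the repetitions inside the sequence $\eta_1, \dots, \eta_n$ reduce the other relevant sums $\eta_i + \d$ to such consecutive tails, and in the sporadic $E_8$ configurations of Lemma \ref{seq}(2),(3) the condition $(\ast)$ — imposed for exactly this purpose — provides $z(\d + \eta_{i_0}), z(\d + \eta_{i_0-1}) > 0$.

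\emph{Part (b).} Here I would induct on $i$, starting from $z_0 = z \in W_0^J$ and $z_0' = z s_\g \in W_0^J$ (the latter by Lemma \ref{f3}). Conjugating $s_{\eta_i}$ through $s_{\eta_{i-1}} \cdots s_{\eta_1}$ yields $z_i = z_{i-1}\, s_{\rho_i}$ and $z_i' = z_{i-1}'\, s_{\rho_i}$ with one and the same root $\rho_i = s_{\eta_1} \cdots s_{\eta_{i-1}}(\eta_i)$. Part (a), together with the fact that each $\eta_j$ is a simple root (so $s_{\eta_j}$ permutes $\Phi^+ - \{\eta_j\}$) and a support computation of the kind used for Lemma \ref{equal}, shows that $\rho_i \in \Phi^+ - \Phi_J$ and that $z_{i-1}$ and $z_{i-1}'$ move $\rho_i$ in a controlled direction; Lemma \ref{f3} — or the argument inside its proof, with $\rho_i$ playing the role of the $\le_J$-maximal root carried to a negative root — then gives $z_i \in W_0^J$, and the same argument gives $z_i' \in W_0^J$.

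The principal obstacle is the positivity $z(\eta_i + \d) > 0$ for the indices $i$ near $i_0$: the case $i = i_0$ falls out of the minimality of $i_0$, but for $i = i_0-1$ (and the extra indices created by the repeated entries of $\eta_1, \dots, \eta_n$) in type $E_8$ one genuinely needs the case-by-case control furnished by Lemma \ref{seq} and packaged in $(\ast)$. Once those few inequalities are in hand, the remainder of (a) and all of (b) reduce to routine manipulation of simple reflections and of the orders $\le$ and $\le_J$.
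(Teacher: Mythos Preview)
Your argument for (a) is essentially the paper's: both pivot on Lemma~\ref{equal}(b) to replace $\langle\eta_i^\vee,\gamma\rangle$ by $\langle\eta_i^\vee,\delta\rangle$, dispose of the nonnegative case via $z s_\gamma(\eta_i) = z(\eta_i) - \langle\eta_i^\vee,\gamma\rangle z(\gamma)$, and in the $=-1$ case bound $z(\gamma + \eta_i)$ below by $z(\delta + \eta_i)$, whose positivity comes from the minimality of $i_0$ and from $(\ast)$. The only difference is organizational: the paper first branches on whether $\eta_i \in J$ (then $z, z s_\gamma \in W_0^J$ already settles it) and only afterwards on the sign of the pairing, while you branch on the pairing first.

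For (b) there is a genuine gap. Your inductive step writes $z_i = z_{i-1} s_{\rho_i}$ and invokes Lemma~\ref{f3} ``with $\rho_i$ playing the role of the $\le_J$-maximal root carried to a negative root''. But $z_{i-1}(\rho_i) = z(\eta_i) > 0$ by (a), so $\rho_i$ is carried to a \emph{positive} root, and neither Lemma~\ref{f3} nor its proof applies: that argument uses $\le_J$-maximality together with a negative image; the dual version for a positive image would need $\rho_i$ to be $J$-antidominant, and it is not (already in case~(1), if $\delta_i \in J$ then $\langle\delta_i^\vee, \rho_i\rangle = 1$). The two ingredients you do mention --- that $s_{\eta_j}$ permutes $\Phi^+ \setminus \{\eta_j\}$, and a support computation --- are exactly the right ones, but they assemble differently. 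The paper does not induct at all. From (a) one has that $z$ and $z s_\gamma$ send every simple root in $H := J \cup \{\eta_1, \dots, \eta_{i_0}\}$ into $\Phi^+$. For $\varepsilon \in J$ one checks that $z^{-1} z_i(\varepsilon) = s_{\eta_i}\cdots s_{\eta_1}(\varepsilon)$ is positive (the inversions $\th_j = s_{\eta_1}\cdots s_{\eta_{j-1}}(\eta_j)$ all contain $\beta$ in their support, hence lie outside $\Phi_J$) with support contained in $H$. A positive root supported in $H$ is a nonnegative combination of $H$, so both $z_i(\varepsilon) = z(s_{\eta_i}\cdots s_{\eta_1}(\varepsilon))$ and $z_i'(\varepsilon) = z s_\gamma(s_{\eta_i}\cdots s_{\eta_1}(\varepsilon))$ are positive. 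That is the route to take.
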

\begin{proof}
If $\eta_i \in J$, then (a) follows from the observation $z, z s_\g \in W_0^J$. Now we assume $\eta_i \notin J$. If $\<\g^\vee, \eta_i\> \ge 0$, we have $z s_\g(\eta_i)=z(\eta_i)-\<\g^\vee, \eta_i\> z(\g) \ge z(\eta_i) > 0$ as desired. If $\<\g^\vee, \eta_i\> \le -1$, by the equality $\<\g^\vee, \eta_i\>=\<\d^\vee, \eta_i\>$ in Lemma \ref{equal}, either $\eta_i=\eta_{i_0}$ or $\eta_i=\eta_{i_0-1}$ in the situation of $(\ast)$. In any case, we always have $z s_\g(\eta_i) = z(\g+\eta_i) \ge z(\d+\eta_i)>0$ and (a) is proved.

Let $\varepsilon \in J$. One checks that $z\i z_i(\varepsilon) > 0$ and $\supp(z\i z_i(\varepsilon))=\supp(s_{\eta_i} \cdots s_{\eta_1}(\varepsilon)) \subseteq H:=J \cup \{\eta_j; 1 \le j \le i_0\}$. By (a), we have $z s_\g(H), z(H) \subseteq \Phi^+$. So $z_i(\varepsilon)=z(z\i z_i(\varepsilon)), z_i'(\varepsilon)= z s_\g (z\i z_i(\varepsilon)) > 0$ and (b) is proved.
\end{proof}

\begin{lem} \label{per}
Let $1 \le i \le i_0$. Then one of the following fails:

(1) $\<z_{i-1}'(\mu), z s_\g(\eta_i)\>=0$;

(2) $z_{i-1}' w z_{i-1}^{\prime -1} (z s_\g(\eta_i)) < 0$;

(3) $z s_\g(\eta_i) + z_{i-1}' w z_{i-1}^{\prime -1} (z s_\g(\eta_i)) \in \Phi$.

Similar result holds if we replace the pair $(z_i', z s_\g)$ with $(z_i, z)$. In particular, $z s_\g(\eta_i)$ and $z(\eta_i)$ are $z_{i-1}' \tw z_{i-1}^{\prime -1}$-permissible and $z_{i-1} \tw z_{i-1}\i$-permissible respectively.
\end{lem}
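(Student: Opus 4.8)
The plan is to derive Lemma \ref{per} from the permissibility criterion of Lemma \ref{bound}. Conditions (1), (2), (3) of Lemma \ref{per} are one-sided weakenings of conditions (1), (2), (3) of Lemma \ref{bound}: they drop the ${w'}\i(\a)$-clauses and ask only $\a+w'(\a)\in\Phi$ rather than $\in\Phi^+$. Hence if one of the three conditions listed in Lemma \ref{per} fails, the corresponding condition of Lemma \ref{bound} fails, and the root in question is permissible. So it suffices, for each of $z s_\g(\eta_i)$ (with $\tw'=z_{i-1}'\tw z_{i-1}^{\prime -1}$) and $z(\eta_i)$ (with $\tw'=z_{i-1}\tw z_{i-1}\i$), to rule out the simultaneous validity of (1), (2), (3). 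To handle the two pairs in parallel I would first note that $z s_\g z\i=s_{z(\g)}$ gives $z_{i-1}'=s_{z(\g)}z_{i-1}$, whence
$$z_{i-1}^{\prime -1}z s_\g=z_{i-1}\i z=s_{\eta_1}\cdots s_{\eta_{i-1}}.$$
Writing $\b_i=s_{\eta_1}\cdots s_{\eta_{i-1}}(\eta_i)\in\Phi$, so that $z_{i-1}(\b_i)=z(\eta_i)$ and $z_{i-1}'(\b_i)=z s_\g(\eta_i)$, this identity yields $\<z_{i-1}'(\mu),z s_\g(\eta_i)\>=\<z_{i-1}(\mu),z(\eta_i)\>=\<\mu,\b_i\>$, and the roots $z_{i-1}'w(\b_i)$, $z_{i-1}w(\b_i)$ entering conditions (2)--(3) differ only by the reflection $s_{z(\g)}$. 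In the three cases of Lemma \ref{seq} one checks, as in the proof of Lemma \ref{equal}, that $\b_i$ has nonzero coefficient along $\b\notin J$, hence $\b_i\notin\Phi_J$, so $z s_\g(\eta_i)\notin z_{i-1}'(\Phi_J)$ and $z(\eta_i)\notin z_{i-1}(\Phi_J)$, and Lemma \ref{bound} applies (all of $z,z s_\g,z_{i-1},z_{i-1}'$ lie in $W_0^J$ by Lemma \ref{mini}).

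For the large majority of indices $i$, condition (1) already fails. One computes $\<\mu,\b_i\>$ from the explicit form of $\b_i$ — in case (1) of Lemma \ref{seq} it is $\eta_i+\eta_{i-1}+\cdots+\eta_1$, so $\<\mu,\b_i\>=\sum_{j\le i}\<\mu,\eta_j\>$, and similarly in the other cases — using that $\<\mu,\eta_1\>=\<\mu,\b\>=-1$ and that, by the chase $\mu+\a^\vee\to_+\mu+\th^\vee$ together with Lemma \ref{seq}, all of the values $\<\mu,\eta_j\>$ for the intermediate indices vanish except for one which equals $1$. Whenever this forces $\<\mu,\b_i\>\neq 0$, condition (1) fails and we are done; in case (1) of Lemma \ref{seq} this leaves only the index $i=m-1$ with $\<\mu,\d_{m-1}\>=1$, and in general only a short, explicitly describable residual list.

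For the residual indices, where $\<\mu,\b_i\>=0$ so that condition (1) holds, I would show that condition (2) or (3) fails. The key input is that in this range $\b_i$ satisfies $\<\mu,\b_i\>=\<\mu,(\b_i)_J\>=0$ (the $J$-antidominant conjugate pushing the value down, weak dominance of $\mu$ pushing it back up), so Lemma \ref{shrink}, or its ${w}\i$-symmetric variant, gives $w(\b_i)\ge_J\b_i$; applying $z_{i-1}$ (resp.\ $z_{i-1}'$), which lies in $W_0^J$ and, by Lemma \ref{mini}(a), is positive on $\b_i$ and on $\Phi_J^+$, keeps $w(\b_i)$ positive, so condition (2) fails. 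When the $\le_J$-shift instead makes the image negative, one uses that $\Phi$ is simply laced: $z s_\g(\eta_i)$ and its $w'$-image are roots of the same length, so their sum is a root only inside an $A_2$-subsystem, and the value $\<\b_i^\vee,w(\b_i)\>$ — read off from the adjacencies recorded by $\supp(\g-\d)$ in Lemma \ref{equal} — rules this out, so $z s_\g(\eta_i)+w'(z s_\g(\eta_i))\notin\Phi$ and condition (3) fails. The two exceptional $E_8$ configurations of Lemma \ref{seq}, with their explicit coweights $\mu$, I would settle by direct computation, drawing on the combinatorial facts assembled in the Appendix.

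I expect the main obstacle to be exactly this last step. For the residual indices, and especially within the two $E_8$ cases, condition (1) gives no information, and one must pin down $w(\b_i)$ and decide the membership $z s_\g(\eta_i)+w'(z s_\g(\eta_i))\in\Phi$ precisely — which requires careful control of the action of $w=w_Kw_J$ on $\b_i$, of the supports $\supp(\g-\d)$, and of which sums of equal-length roots are roots in a simply laced system. This is just the case-by-case combinatorics the Appendix is designed to supply, so I would expect the residual verifications to reduce to quoting those lemmas rather than to a new idea; by contrast, the reduction to Lemma \ref{bound} and the uniform handling of $(z_{i-1}',z s_\g)$ and $(z_{i-1},z)$ should be routine once the identity $z_{i-1}^{\prime -1}z s_\g=z_{i-1}\i z$ is in hand.
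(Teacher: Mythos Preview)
Your reduction to Lemma~\ref{bound} via the identity $z_{i-1}^{\prime -1}z s_\g=z_{i-1}\i z=s_{\eta_1}\cdots s_{\eta_{i-1}}$ and the root $\b_i=s_{\eta_1}\cdots s_{\eta_{i-1}}(\eta_i)$ is correct and matches the paper exactly. But the subsequent case analysis has a real gap. In case~(1) of Lemma~\ref{seq}, your claim that the residual list ``leaves only the index $i=m-1$'' is wrong: if the unique intermediate $\d_j$ with $\<\mu,\d_j\>=1$ occurs early (say $j=2$), then $\<\mu,\b_i\>=0$ for \emph{all} $j\le i\le i_0$, so potentially every index in the lemma is residual. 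For these indices you invoke Lemma~\ref{shrink} ``or its $w\i$-symmetric variant'' to get $w(\b_i)\ge_J\b_i$; but Lemma~\ref{shrink} gives only $w\i(\b_i)\ge_J\b_i$, its proof (via $w_Jw_K\in W_J^K$) is not symmetric in $w$ and $w\i$, and you do not establish the variant. Moreover, applying Lemma~\ref{shrink} at all requires $\<\mu,(\b_i)_J\>=0$, which you assert but do not prove---weak dominance only gives $\ge -1$.

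The paper argues differently, and in case seq(1) never touches condition~(1). It \emph{assumes}~(2) and, using that $z s_\g\in W_0^H$ with $H=J\cup\{\eta_1,\dots,\eta_{i_0}\}$ (Lemma~\ref{mini}) together with $\b=\eta_1\le\b_i\in\Phi^+-\Phi_J$ (so $w(\b_i)>0$), forces the support constraint $\supp(w(\b_i))\subseteq H_{i-1}=\{\eta_1,\dots,\eta_{i-1}\}$. Then in case seq(1) the set $H_i$ is of type $A$ and the coefficient of $\b$ in $\b_i+w(\b_i)$ is $2$, so~(3) fails outright; in cases seq(2) and seq(3) the same coefficient becomes $4$ in a type-$D$ subsystem for large $i$, while for small $i$ one checks directly that $\<\mu,\b_i\>=-1$. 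No appendix lemmas are invoked. The step you are missing is precisely this support-forcing argument from~(2): once $\supp(w(\b_i))\subseteq H_{i-1}$ is in hand, the failure of~(3) is an elementary coefficient count, and no Lemma~\ref{shrink}-style root comparison is needed.
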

\begin{proof}
Let $H_i=\{\eta_1, \dots, \eta_i\}$ and $H=J \cup H_{i_0}$. Assume (2) holds. Then $$z_{i-1}' w z_{i-1}^{\prime -1} (z s_\g(\eta_i))=z s_\g s_{\eta_{i-1}} \cdots s_{\eta_1} w s_{\eta_1} \cdots s_{\eta_{i-1}} (\eta_i) < 0.$$ Since $\supp(s_{\eta_{i-1}} \cdots s_{\eta_1} w s_{\eta_1} \cdots s_{\eta_{i-1}} (\eta_i)) \subseteq H$ and $z s_\g \in W_0^H$ (see Lemma \ref{mini}), we have $s_{\eta_{i-1}} \cdots s_{\eta_1} w s_{\eta_1} \cdots s_{\eta_{i-1}} (\eta_i) < 0$. Noticing that $$\b=\eta_1 \le s_{\eta_1} \cdots s_{\eta_{i-1}} (\eta_i) \in \Phi^+ - \Phi_J$$ we deduce that $w s_{\eta_1} \cdots s_{\eta_{i-1}} (\eta_i) > 0$, which together with $$s_{\eta_{i-1}} \cdots s_{\eta_1} w s_{\eta_1} \cdots s_{\eta_{i-1}} (\eta_i) < 0,$$ implies that  $\supp(w s_{\eta_1} \cdots s_{\eta_{i-1}} (\eta_i)) \subseteq H_{i-1}$.

Assume Lemma \ref{seq}(1) occurs. Then $H_i$ is of type $A$. However, the coefficient of $\b$ in $s_{\eta_1} \cdots s_{\eta_{i-1}} (\eta_i) + w s_{\eta_1} \cdots s_{\eta_{i-1}} (\eta_i) \in \ZZ H_i$ is $2$. So (3) fails.

Assume Lemma \ref{seq}(3) (resp. Lemma \ref{seq}(2)) occurs. If $1 \le i \le 6$ (resp. $1 \le i \le 4$), then $\<z_{i-1}'(\mu), z s_\g(\eta_i)\>=\<\mu, \b\>=-1$ and (1) fails. Otherwise, the coefficient of $\b$ in $s_{\eta_1} \cdots s_{\eta_{i-1}} (\eta_i)$ is $2$. Hence the coefficient of $\b$ in $s_{\eta_1} \cdots s_{\eta_{i-1}} (\eta_i) + w s_{\eta_1} \cdots s_{\eta_{i-1}} (\eta_i) \in \ZZ H_i$ is $4$. Therefore, (3) fails since $H_i$ is of type $D$.

The ``In particular" part follows from Lemma \ref{bound}.
\end{proof}

\

Now we are ready to prove Proposition \ref{hyp'} under the assumption $\Xi(\l, \mu) \cap z\i(\Phi^+)=\emptyset$.

By Lemma \ref{mini}, we have $z_i, z_i' \in W_0^J$ and $z(\eta_i), z s_\g(\eta_i)>0$ for $1 \le i \le i_0$. We show that

(a) $z_{i-1} \overset {z(\eta_i)} \longleftrightarrow z_i$ and $z_{i-1}' \overset {z s_\g(\eta_i)} \longleftrightarrow z_i'$ for $1 \le i \le i_0$.

To this end, by Lemma \ref{per} and Corollary \ref{f4}, it remains to show

(a1) $s_{z s_\g(\eta_i)} z_{i-1}' \tw z_{i-1}^{\prime -1}, s_{z(\eta_i)} z_{i-1} \tw z_{i-1}\i \in \Adm(\l)$.

For $1 \le i \le n$, we set $$\th_i=z_{i-1}\i (z(\eta_i))=(z_{i-1}')\i (z s_\g (\eta_i))=s_{\eta_1} \cdots s_{\eta_{i-1}} (\eta_i).$$ By Corollary \ref{f4}, to verify (a1), it suffices to show $\mu+(\th_i)_J^\vee \preceq \l$ for $1 \le i \le i_0$. We only deal with the case when Lemma \ref{seq}(3) occurs, other cases can be handled similarly. First we show $\mu+\th_i^\vee \preceq \l$ for $i \ge 7$, in which case $\th_i=\eta_i+\cdots+\eta_1$. Indeed, if $i=n=m+5$, then $\mu+\th_i^\vee=\mu+\th^\vee \preceq \l$. Assuming $\mu+\th_{i+1}^\vee \preceq \l$ with $i \ge 7$, we show $\mu+\th_i^\vee \preceq \l$. Noticing that $\<\mu+\th_{i+1}^\vee, \eta_{i+1}\> = \<\mu, \eta_{i+1}\>+1 \ge 1$ if $i \ge 7$, we have $\mu+\th_i^\vee = \mu+\th_{i+1}^\vee-\eta_{i+1}^\vee \preceq \mu+\th_{i+1}^\vee \preceq \l$ as desired. If $1 \le i \le 6$, one checks that $\<\mu, \th_i\>=-1$ and hence $\mu+\th_i^\vee \preceq \mu \preceq \l$ as desired. Therefore, (a1) and hence (a) are proved.

Then we show that

(b) $z_{i_0} \overset {-z(\g)} \longleftrightarrow z_{i_0}'$.

Again by Corollary \ref{f4} we need to show

(b1) $s_{z(\g)} z_{i_0} \tw z_{i_0}\i, z_{i_0} \tw z_{i_0}\i s_{z(\g)} \in \Adm(\l)$.

By Lemma \ref{equal}, $$z_{i_0}\i z(\g)=\g+\eta_{i_0}+\cdots+\eta_1=s_{\eta_2} \cdots s_{\eta_{i_0}}(\g) + \b.$$ So $z_{i_0}\i(z(\g))-\th=\g-\d \in \ZZ \Phi_J$ and $z_{i_0}\i z(\g)$ is a $W_J$-conjugate of $\th$ (see Lemma \ref{f5}). Now (b1) follows from Corollary \ref{f4} since $\mu+\th^\vee \preceq \l$.

Set $\vartheta=(s_{\eta_2} \cdots s_{\eta_{i_0}}(\g))_J$. If Lemma \ref{seq}(1) occurs, $\supp(\vartheta) \in J \cup \{\d_m, \dots, \d_2\}$. If Lemma \ref{seq}(2) or (3) occurs, $\d_1+\cdots+\d_m \le \vartheta$ (since $\a=\d_m$ and $\b=\d_1$ are contained in $\supp(\vartheta)$) and the coefficient of $\b$ in $\vartheta$ is one. Then $\<\mu, \vartheta\> \ge 0$ in all cases. Since $w s_{\eta_2} \cdots s_{\eta_{i_0}}(\g) \ge_J \vartheta$, we have $$\<\mu, w s_{\eta_2} \cdots s_{\eta_{i_0}}(\g)\> \ge \<\mu, \vartheta\> \ge 0.$$ Moreover, by Lemma \ref{positive}, $\<\mu, w(\b)\>=\<\mu, w_J(\b)\> \ge 1$ since $w_J(\b) \in \Phi^+ - \Phi_{J_{\nu_{\tw}}}$ is $J$-dominant. Therefore, $$\<\mu, w z_{i_0}\i z(\g)\>=\<\mu, w s_{\eta_2} \cdots s_{\eta_{i_0}}(\g)\> + \<\mu, w(\b)\> \ge 1,$$ that is, $\<z_{i_0}(\mu), z_{i_0} w z_{i_0}\i(-z(\g))\>=-\<\mu, w z_{i_0}\i z(\g)\> \le -1$. By Lemma \ref{bound}, $-z(\g)$ is $z_{i_0} \tw z_{i_0}\i$-permissible and (b) follows.

\

Combining (a) with (b) we deduce $z=z_0 \leftrightarrow z_0'=z s_\g$, which finishes the proof of Proposition \ref{hyp'} (for simply laced Dynkin diagrams).

\section{Non-simply laced Dyndin diagrams except $G_2$} \label{non-simply-laced}
In this section, we show Proposition \ref{hyp'} holds if the connected Dynkin diagram $\SS_0$ of $\Phi$ is non-simply laced but not of type $G_2$. Since $G$ is semisimple, there exists a simply laced Dynkin diagram $\SS_0'$ (of type $A_{2k+1}$ or $D_k$ or $E_6$) equipped with a nontrivial diagram involution $\iota$ such that the extended affine Weyl group $\tW$ associated to $\Phi$ is a subgroup of the $\iota$-fixed point subgroup of $\tW'=P' \rtimes W_0'$. Here $P'$ (resp. $W_0'$) denotes the coweight lattice (resp. Weyl group) of the root system $\Phi'$ of $\SS_0'$. For $\a \in \Phi'$ we set $\underline \a=(\a+\iota(\a)) / 2$. Then $\Phi=\{\underline\a; \a \in \Phi'\}$. Denote by $r_\a=s_{\underline \a} \in W_0$ the corresponding reflection of $\underline \a$. Notice that $\<\a, \iota(\a)\>=0$ if $\a \neq \iota(\a)$.
\begin{lem}
The Bruhat order on $(\tW', \SS_0')$ restricts to the Bruhat order on $(\tW, \SS_0)$.
\end{lem}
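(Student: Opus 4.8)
The plan is to transport the reflection-chain description of the Bruhat order from \S\ref{setup} back and forth across the folding $\iota$. Via the given embedding, regard $\tW$ as a subgroup of $(\tW')^\iota$, and write $\SS^a$ and $(\SS^a)'$ for the affine simple reflections of $\tW$ and $\tW'$. As $\iota$ has order two, each $\iota$-orbit $O$ on $(\SS^a)'$ has one element or two; in the latter case they are perpendicular and hence commute, and the generator $s_O \in \SS^a$ attached to $O$ is identified in $\tW'$ with the longest element of $\langle O\rangle$, i.e.\ with a single element of $(\SS^a)'$ or with a product $s_1 s_2$ of two commuting ones. (Likewise each affine reflection of $\Phi$ is, inside $\tW'$, a single affine reflection of $\Phi'$ or a product of two commuting ones.) The basic point is that for $x \in \tW$ the affine inversion set $N'(x) = (\tPhi')^- \cap x\bigl((\tPhi')^+\bigr)$ is $\iota$-stable, since $\iota$ fixes the base alcove and commutes with $x$; thus $N'(x)$ meets each orbit $O$ either trivially or in all of $O$. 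Consequently $s_1$ and $s_2$ are simultaneously right descents, or simultaneously right ascents, of $x$, with $s_O$ a right descent of $x$ in $\tW$ precisely in the first case; counting roots gives the dictionary $\ell(x s_O) = \ell(x) \pm 1 \iff \ell'(x s_O) = \ell'(x) \pm |O|$ with matching signs, and likewise for left multiplication. Iterating, a reduced word for $y \in \tW$ in $(\tW, \SS^a)$ becomes, after replacing each letter $s_O$ by a reduced word for the longest element of $\langle O \rangle$, a reduced word for $y$ in $(\tW', (\SS^a)')$.

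The implication ``$x \leq y$ in $\tW$ $\Rightarrow$ $x \leq y$ in $\tW'$'' is then immediate from the subword property: expand a reduced word for $y$ in $(\tW,\SS^a)$ containing $x$ as a subword to a reduced word $\mathbf w$ for $y$ in $\tW'$; keeping, for each used letter $s_O$, the whole corresponding block of $\mathbf w$ and deleting the other blocks exhibits $x$ as a subexpression of $\mathbf w$.

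The reverse implication is the heart of the lemma, the obstacle being that a reflection chain witnessing $x \leq y$ in $\tW'$ need not pass through $\tW$. I would argue by induction on $\ell'(y)$, using the $Z$-property of the Bruhat order on $\tW'$. If $\ell'(y) = 0$ then $\ell'(x) = 0$, $x$ and $y$ lie in one $W^{a\prime}$-coset, and $x = y$. Otherwise pick a right descent $s_O$ of $y$ in $(\tW, \SS^a)$ and put $y' = y s_O \in \tW$, so $\ell(y') = \ell(y) - 1$, $\ell'(y') < \ell'(y)$, and each of $s_1, s_2$ is a right descent of $y$ in $\tW'$ (by the dictionary). Applying the $Z$-property in $\tW'$ once for $s_1$, and again for $s_2$ when $|O| = 2$, produces two cases according to whether $s_O$ is a right ascent or a right descent of $x$ (the only possibilities, again by the dictionary): in the ascent case one gets $x \leq y'$ in $\tW'$, in the descent case $x s_O \leq y'$ in $\tW'$ with $x s_O \in \tW$ and $\ell(x s_O) = \ell(x) - 1$.

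In the ascent case the induction hypothesis gives $x \leq y'$ in $\tW$, and $y' \leq y$ in $\tW$ finishes it. In the descent case the induction hypothesis gives $x s_O \leq y'$ in $\tW$; since $s_O$ is a common right ascent of $x s_O$ and of $y' = y s_O$ in $(\tW, \SS^a)$, the lifting property in $\tW$ yields $x = (x s_O) s_O \leq (y') s_O = y$. This closes the induction. The genuinely delicate inputs are precisely the $\iota$-stability of $N'(x)$ for $x \in \tW$ --- this is what forbids ``mixed'' orbits, hence what lets us peel off an entire block $s_O$ while remaining inside $\tW$ --- and an elementary, case-by-case verification that in each relevant folding (types $A_{2k+1}$, $D_k$, $E_6$) every two-element $\iota$-orbit of affine simple roots consists of perpendicular roots; the order-three folding relevant to $G_2$ is excluded here and dealt with separately in \S\ref{G2}.
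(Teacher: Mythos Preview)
The paper states this lemma without proof, treating it as a known fact about folding of Coxeter systems (results of this type go back to Steinberg; see also H\'ee). So there is no ``paper's own proof'' to compare against. Your argument is correct and is essentially the standard one: once you establish the length dictionary $\ell'(x s_O)=\ell'(x)\pm|O|$ via the $\iota$-stability of the inversion set of an $\iota$-fixed element, the forward implication is immediate from the subword criterion, and the reverse implication follows by induction on $\ell'(y)$ using the lifting property in $\tW'$ to peel off one block $s_O$ at a time while staying inside $\tW$.

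Two minor points worth tightening. First, your description of $N'(x)$ is the \emph{image} of the right inversion set; what you actually use is that the set of right descents $\{s\in(\SS^a)':x\alpha_s<0\}$ is $\iota$-stable, which follows the same way. Second, your induction needs $\ell'(y)>0\Rightarrow\ell(y)>0$, i.e.\ that length-zero elements of $\tW$ have length zero in $\tW'$; this is implicit in your dictionary (the $\iota$-orbits on $\tilde\Phi'$ are in bijection with $\tilde\Phi$, so $N'(x)=\emptyset$ iff $N(x)=\emptyset$), but it is worth saying so explicitly to handle the extended part $\Omega\subseteq\tW$ cleanly. The case check that two-element $\iota$-orbits on $(\SS^a)'$ consist of orthogonal roots is exactly what the paper records in the line ``$\langle\alpha,\iota(\alpha)\rangle=0$ if $\alpha\neq\iota(\alpha)$'' for the finite simple roots; the affine node is $\iota$-fixed in each of the foldings $A_{2k+1},D_k,E_6$, so nothing further is needed there.
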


For $D \subseteq \Phi'$ we set $D^\iota=\{\d \in D; \iota(\d)=\d\}$.
\begin{lem} \label{exchange}
There are exactly two connected components of $\SS_0' - (\SS_0')^\iota$. Moreover, $\iota$ exchanges these two connected components.
\end{lem}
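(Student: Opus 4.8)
The plan is to deduce the statement from the classification of the possible pairs $(\SS_0',\iota)$ together with a short structural observation about involutions of trees. First I would record that, since $\SS_0$ is connected, non-simply laced and not of type $G_2$, and since $\Phi$ is obtained from $(\Phi',\iota)$ by folding, the pair $(\SS_0',\iota)$ must be one of the following: $\SS_0'$ of type $A_{2k+1}$ ($k\ge 1$) with $\iota$ the flip $i\mapsto 2k+2-i$; $\SS_0'$ of type $D_k$ ($k\ge 4$) with $\iota$ the diagram automorphism interchanging the two fork nodes $\alpha_{k-1},\alpha_k$ (for $k=4$, $\iota$ may be any of the three conjugate order-two automorphisms, triality being ruled out since it would fold to $G_2$); or $\SS_0'$ of type $E_6$ with $\iota$ its unique nontrivial involution. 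In each case $\iota\neq\mathrm{id}$ and $\SS_0'$ is a tree.

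Next I would prove the following general facts, valid for any nontrivial involution $\iota$ of a finite tree $T$ whose fixed set $F:=T^\iota$ is nonempty. The set $F$ is a connected subtree: for $u,v\in F$ the unique path joining them is $\iota$-stable with fixed endpoints, hence fixed pointwise. Since $T$ is a tree and $F$ is connected, every connected component $C$ of $T\setminus F$ is joined to $F$ by exactly one edge $e_C=\{v_C,u_C\}$ with $v_C\in F$, $u_C\in C$ (two such edges would produce a cycle). If $\iota(C)=C$, then $\iota$ fixes $v_C$, hence sends $e_C$ to an edge from $F$ to $C$, hence fixes $e_C$; as $\iota(v_C)=v_C$ it cannot swap the endpoints of $e_C$, so $\iota(u_C)=u_C$, contradicting $C\cap F=\emptyset$. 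Thus $\iota$ fixes no component of $T\setminus F$, so the components fall into $\iota$-orbits of size two; since $F\subsetneq T$ there is at least one, hence at least two. Applied with $T=\SS_0'$ and $F=(\SS_0')^\iota$ — which is nonempty in each of the three cases (for $A_{2k+1}$ because an involution of an odd set has a fixed point; for $D_k$ and $E_6$ by inspection) — this reduces the lemma to checking that there are \emph{exactly} two components, after which the exchange assertion is automatic from ``no fixed component''.

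Finally I would carry out this elementary verification case by case, in the Bourbaki labelings. For $A_{2k+1}$, $\iota$ fixes only the central node $\alpha_{k+1}$, so $\SS_0'\setminus(\SS_0')^\iota=\{\alpha_1,\dots,\alpha_k\}\sqcup\{\alpha_{k+2},\dots,\alpha_{2k+1}\}$, two components visibly interchanged by $\iota$. For $D_k$ with the standard involution, $(\SS_0')^\iota=\{\alpha_1,\dots,\alpha_{k-2}\}$ and the complement is $\{\alpha_{k-1}\}\sqcup\{\alpha_k\}$, interchanged by $\iota$ (for $D_4$ with $\iota$ an outer-node transposition one finds instead $(\SS_0')^\iota=\{\alpha_2,\alpha_4\}$ and complement $\{\alpha_1\}\sqcup\{\alpha_3\}$, same conclusion). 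For $E_6$, $(\SS_0')^\iota=\{\alpha_2,\alpha_4\}$ and the complement splits as $\{\alpha_1,\alpha_3\}\sqcup\{\alpha_5,\alpha_6\}$, interchanged by $\iota$. This exhausts all cases.

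I do not anticipate a real obstacle: the only step requiring care is the classification at the start — making sure the three listed pairs $(\SS_0',\iota)$ are precisely those arising in the connected, non-simply laced, non-$G_2$ situation, in particular treating the exceptional diagram $D_4$ correctly and excluding the order-three triality automorphism — after which the argument is a direct inspection of three Dynkin diagrams.
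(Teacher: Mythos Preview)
Your proof is correct. The paper states this lemma without proof, treating it as an evident consequence of the classification of the folding pairs $(\SS_0',\iota)$ listed at the start of \S\ref{non-simply-laced}. Your approach is more elaborate than necessary: the general tree-involution observation (that the fixed set of a nontrivial involution of a tree is a subtree, and that the components of its complement come in $\iota$-swapped pairs) is valid and pleasant, but the direct case-by-case inspection you carry out at the end already suffices on its own and is presumably all the authors had in mind. One stylistic quibble: in the tree argument, ``$\iota$ fixes $v_C$'' holds simply because $v_C\in F$, independently of the hypothesis $\iota(C)=C$; the logic that follows is nonetheless sound.
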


Let $\l, \tw, \mu, J, w$ be as in Section \ref{proof-main}. Let $J'=\{\a \in \SS_0'; \underline \a \in J\}$. Then $\mu=\iota(\mu)$ is $J'$-dominant, $J'$-minuscule and weakly dominant for $\Phi'$. In particular, $\tw \in \Omega_{J'}$. Fix $1 \neq z \in W_0^J$, that is, $1 \neq z=\iota(z) \in {W_0'}^{J'}$.

Let $\chi=\sum_{\a \in \SS_0'} c_\a \a$ and $\chi'=\sum_{\a \in \SS_0'} c_\a' \a$ with $c_\a, c_\a' \in \ZZ$. We define $\chi' \wedge \chi=\sum_{\a \in \SS_0'} \min\{c_\a, c_\a'\} \a$.
\begin{lem} \label{o2}
Let $\g \in {\Phi'}^+ - \Phi'_{J'}$ such that $\g-\iota(\g) \in \ZZ \Phi'_{J'}$. Then $\g \wedge \iota(\g), \g-\g \wedge \iota(\g) \in {\Phi'}^+ \cup \{0\}$. Moreover, if $\g \neq \iota(\g)$ and $\<\d^\vee, \g\>=\<\d^\vee, \iota(\g)\>=-1$ for some $\d \in \Phi_{J'}'$, then $\d=\iota(\d)$ and $\<\d, \g \wedge \iota(\g)\>=-1$.
\end{lem}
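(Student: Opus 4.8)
The plan is to reduce at once to the case $\g\neq\iota(\g)$, exploit that $\g$ and $\iota(\g)$ are then conjugate under $W_{J'}$, and read everything off from the minuscule structure of their common dominant representative.

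First, if $\g=\iota(\g)$ there is nothing to do, since $\g\wedge\iota(\g)=\g$ and $\g-\g\wedge\iota(\g)=0$. So assume $\g\neq\iota(\g)$. Because $\g-\iota(\g)\in\ZZ\Phi'_{J'}$ and $\g\notin\Phi'_{J'}$, the analogue of Lemma \ref{f5} for the simply laced root system $\Phi'$ and the $\iota$-stable subset $J'$ shows that $\g$ and $\iota(\g)$ lie in a single orbit of the subgroup $W_{J'}\subseteq W_0'$ generated by $\{s_\d:\d\in J'\}$; let $\bar\g\in{\Phi'}^+-\Phi'_{J'}$ be their common $J'$-dominant representative. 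As $\Phi'$ is simply laced, $\bar\g$ is $J'$-minuscule (the observation used in the proof of Lemma \ref{f5}), and as the unique $J'$-dominant element of an $\iota$-stable orbit it is $\iota$-fixed. Since $J'$ is $\iota$-stable, $\g$ and $\iota(\g)$ also have equal coefficient on every $\a\in\SS_0'-J'$; hence, writing $\rho:=\g-\g\wedge\iota(\g)$ and $\rho':=\iota(\g)-\g\wedge\iota(\g)$, the vectors $\rho,\rho'$ are non-negative integral combinations of $J'$ with disjoint supports, $\rho-\rho'=\g-\iota(\g)$, and (as $\g\wedge\iota(\g)$ is $\iota$-fixed) $\iota(\rho)=\rho'$.

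For the first assertion: the weights of the minuscule $W_{J'}$-module with extreme weight $\bar\g$ form a distributive lattice whose meet and join, in the coordinates relative to $\SS_0'$, are the coordinatewise operations $\wedge$ and $\vee$ (the standard description of minuscule weight posets; the point is that elements of the underlying poset of any fixed colour form a chain). Thus $\g\wedge\iota(\g)$ and $\g\vee\iota(\g)$ are again weights of this module, hence positive roots, and the valuative identity $\g+\iota(\g)=(\g\wedge\iota(\g))+(\g\vee\iota(\g))$ gives $\rho=(\g\vee\iota(\g))-\iota(\g)$ and $\rho'=(\g\vee\iota(\g))-\g$. From $\iota(\rho)=\rho'$ and $\supp(\rho)\cap\supp(\rho')=\emptyset$ it follows that $\supp(\rho)$ contains no $\iota$-fixed simple root, so by Lemma \ref{exchange} it lies in one of the two arms $C_i$ of $\SS_0'-(\SS_0')^\iota$, with $\supp(\rho')\subseteq\iota(C_i)$; in particular $\rho$ is confined to $J'\cap C_i$, i.e.\ to components of $J'$ of type $A$ or $D$. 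It then remains to check that such a confined difference of two comparable weights of a minuscule orbit is a single positive root (or $0$) — a finite verification, the ambient possibilities for $\Phi'$ being $A_{2k+1}$, $D_k$, $E_6$, or a consequence of the explicit shape of minuscule posets. This gives $\g\wedge\iota(\g),\rho,\rho'\in{\Phi'}^+\cup\{0\}$, which is the first claim, and incidentally identifies $\g\wedge\iota(\g)$ with a $W_{J'}$-conjugate of $\bar\g$.

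For the ``Moreover'' part, suppose $\g\neq\iota(\g)$ and $\<\d^\vee,\g\>=\<\d^\vee,\iota(\g)\>=-1$ for some $\d\in\Phi'_{J'}$. Applying $\iota$ gives also $\<\iota(\d)^\vee,\g\>=\<\iota(\d)^\vee,\iota(\g)\>=-1$, so $\g+\d$ and $\g+\iota(\d)$ are again weights of the minuscule module, and one has the bound $\<\varepsilon^\vee,\g\>\in\{-1,0,1\}$ for all $\varepsilon\in\Phi'_{J'}$. If $\d\neq\iota(\d)$, then $\d$ and $\iota(\d)$ lie in distinct components of $J'$ swapped by $\iota$ (so $\d\perp\iota(\d)$) or in a common $\iota$-stable component; in either case one plays the $\iota$-symmetric configuration around $\d,\iota(\d)$ against the minuscule bound, using that $\supp(\g)$ must then connect the two arms and hence meet $(\SS_0')^\iota$, to reach a contradiction. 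Hence $\d=\iota(\d)$; then $\d$ is not supported in the arm carrying $\rho$, so it lies in a component of $J'$ orthogonal to that component, whence $\<\d^\vee,\rho\>=0$ and, $\Phi'$ being simply laced, $\<\d,\g\wedge\iota(\g)\>=\<\d^\vee,\g\>-\<\d^\vee,\rho\>=-1$.

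The main obstacle is the combinatorial heart of the first assertion — that a difference of two comparable minuscule weights, once confined to a single arm, is an honest root of $\Phi'$ — together with the contradiction ruling out $\d\neq\iota(\d)$. Both are genuine facts about minuscule weight posets in types $A_{2k+1}$, $D_k$, $E_6$; a uniform treatment would go through the ($d$-complete) minuscule poset description, but a bounded case-by-case inspection, of the kind relegated to the Appendix elsewhere in this paper, would equally settle them.
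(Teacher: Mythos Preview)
Your approach is genuinely different from the paper's. The paper argues by induction on the height of $\g$: if $\g$ is $J'$-antidominant then $\g=\iota(\g)$ and there is nothing to prove; otherwise one picks $\a\in J'$ with $\g'=\g-\a\in{\Phi'}^+$, applies the induction hypothesis to $\g'$, and recovers $\g\wedge\iota(\g)$ from $\g'\wedge\iota(\g')$ by a short case split on whether $\a=\iota(\a)$ and on a couple of pairings. You instead try to read both assertions off the lattice structure of the minuscule $W_{J'}$-orbit of the common $J'$-dominant representative $\bar\g$. This is an attractive viewpoint, but as written several steps are not justified. The assertion that the coordinatewise minimum of two weights in a minuscule $W_{J'}$-orbit lies again in that orbit is a genuine theorem (via the realization of minuscule posets as posets of order ideals), but you neither prove it nor give a precise reference, and the remark about colours forming chains is not by itself enough. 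The inference ``$\supp(\rho)$ contains no $\iota$-fixed simple root, so by Lemma~\ref{exchange} it lies in one arm $C_i$'' is a non sequitur: Lemma~\ref{exchange} only says $\SS_0'-(\SS_0')^\iota$ has two components, not that $\supp(\rho)$ is confined to one of them. You then explicitly leave ``$\rho$ is a root'' to a finite verification. Finally, in the ``Moreover'' part, your deduction of $\<\d^\vee,\rho\>=0$ from $\d=\iota(\d)$ via supports is incorrect: an $\iota$-fixed root $\d\in\Phi'_{J'}$ can have connected $\iota$-stable support meeting both arms through $(\SS_0')^\iota\cap J'$, so nothing forces it to be orthogonal to $\rho$; and the preceding argument ruling out $\d\neq\iota(\d)$ is only a sketch.

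The paper's treatment of the ``Moreover'' clause is instructive by contrast: having already written $\g=\xi+\varepsilon$ with $\xi=\g\wedge\iota(\g)\in{\Phi'}^+$ and $\varepsilon\in\Phi_{J'}^{\prime +}$, it runs a trichotomy on $\<\d^\vee,\xi\>$, the cases $\ge 1$ and $=0$ each forcing a pairing $\le -2$ against some root of the simply laced $\Phi'$; this pins down $\<\d^\vee,\xi\>=-1$ and $\<\d^\vee,\varepsilon\>=0$ directly, after which $\d=\iota(\d)$ follows from one further contradiction of the same kind. The whole argument uses nothing beyond elementary simply-laced pairing bounds, and in particular no structure theory of minuscule posets. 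Your structural route could likely be completed, but it would require importing or reproving nontrivial facts about minuscule weight lattices that the paper's induction sidesteps entirely.
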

\begin{proof}
We argue by induction on the height of $\g$. If $\g$ is $J'$-antidominant, then so is $\iota(\g)$, which means $\g=\iota(\g)$ since $\g-\iota(\g) \in \ZZ \Phi'_{J'}$. Assume $\g$ is not $J'$-antidominant. Then there exists $\a \in J'$ such that $\g'=\g-\a \in {\Phi'}^+$. Let $\xi'=\g' \wedge \iota(\g')$. If $\g'=\iota(\g')$, the statement follows easily. Now we assume $\g' \neq \iota(\g')$. By induction hypothesis, $\g'=\xi'+\varepsilon'$ for some $\varepsilon' \in \Phi_{J'}^{\prime +}$. Noticing that $\<{\g'}^\vee, \varepsilon'\>=-\<{\g'}^\vee, \a\>=1$ we have $\a \neq \varepsilon'$ and either $\<\a^\vee, \varepsilon'\>=-1$ or $\<\a^\vee, \xi'\>=-1$.

Case(1): $\a=\iota(\a)$. If $\<\a^\vee, \varepsilon'\>=\<\a^\vee, \iota(\varepsilon')\>=-1$, then $\<\a^\vee, \xi'\>=0$ and hence $\<\a^\vee, \xi'+\varepsilon'+\iota(\varepsilon')\>=-2$, which is a contradiction since $\Phi'$ is simply laced. So $\<\a^\vee, \xi'\>=-1$ and $\g \wedge \iota(\g)=\xi'+\a$ as desired.

Case(2): $\a \neq \iota(\a)$. If $\<\a^\vee, \xi'\>=0$, then $\<\a^\vee, \varepsilon'\>=-1$. By definition $\supp(\varepsilon') \subseteq \SS_0'-(\SS_0')^\iota$. Thus $\supp(\a+\varepsilon')$ (which is connected) lies in a connected component of $\SS_0'-(\SS_0')^\iota$. By Lemma \ref{exchange}, we have $\supp(\a+\varepsilon') \cap \supp(\iota(\a+\varepsilon'))=\emptyset$ and hence $\g \wedge \iota(\g)=\xi'$ as desired. Assume $\<\a^\vee, \xi'\>=-1$. Then $\<\a^\vee, \varepsilon'\>=0$. If $\<\a^\vee, \iota(\varepsilon')\>=0$, then $\<\a+\xi'+\varepsilon', \iota(\a+\xi'+\varepsilon')\>=-2$, which is impossible. So $\<\a^\vee, \iota(\varepsilon')\> \ge 1$. Then $\varepsilon'-\iota(\a) \in \Phi_{J'}^{\prime +} \cup \{0\}$ (since $\a$ is a simple root) and $\g=\a+\xi'+\iota(\a)+(\varepsilon'-\iota(\a))$, which means $\g \wedge \iota(\g)=\a+\xi'+\iota(\a)$ as desired.

Finally we show the ``Moreover" part. Set $\xi=\g \wedge \iota(\g) \in {\Phi'}^+ - \Phi_{J'}'$. Since $\g \neq \iota(\g)$, $\g=\xi+\varepsilon$ for some $\varepsilon \in \Phi_{J'}^{\prime +}$ with $\varepsilon \neq \iota(\varepsilon)$. If $\<\d^\vee, \xi\> \ge 1$, then $\<\d^\vee, \varepsilon\>=\<\d^\vee, \iota(\varepsilon)\> \le -2$, which means $-\d=\varepsilon=\iota(\varepsilon)$ (since $\Phi'$ is simply laced), a contradiction. If $\<\d^\vee, \xi\>=0$, then $\<\d^\vee, \varepsilon\>=\<\d^\vee, \iota(\varepsilon)\>=-1$ and $\<\d^\vee, \xi+\varepsilon+\iota(\varepsilon)\>=-2$ which is again impossible. Thus $\<\d^\vee, \xi\>=-1$ and $\<\d^\vee, \varepsilon\>=\<\d^\vee, \iota(\varepsilon)\>=0$. It remains to show $\d =\iota(\d)$. Assume otherwise. Then both $\xi+\d+\iota(\d)$ and $\xi+\varepsilon+\iota(\varepsilon)$ are positive roots and $\<\xi+\d+\iota(\d), \xi+\varepsilon+\iota(\varepsilon)\>=-2$, a contradiction. Therefore, $\d = \iota(\d)$ and the proof is finished.
\end{proof}

For $\phi \in \ZZ\Phi'-\ZZ\Phi_{J'}'$ we define $$\ca_\phi =\{\g \in \Phi'; z(\g)<0, \g-\phi \in \ZZ \Phi'_{J'}\}.$$ If $\ca_\phi \neq \emptyset$, by Lemma \ref{f5}, $\ca_\phi$ contains a unique $J'$-antidominant root, which we denoted by $\vartheta_\phi$. If $\phi-\iota(\phi) \in \ZZ \Phi_{J'}'$, then $\ca_\phi^\iota \neq \emptyset$ if and only if $\ca_\phi \neq \emptyset$.
\begin{lem} \label{o3}
We have the following properties:

(1) If $\ca_\phi^\iota \neq \emptyset$, then $z s_\g, z r_\g \in {W_0'}^{J'}$ for $\g \in (\ca_\phi)_{\max, J'}$ satisfying $\g \wedge \iota(\g) \in (\ca_\phi^\iota)_{\max, J'}$.

(2) If $\ca_{\phi+\iota(\phi)}=\emptyset$, then $z s_\g, z r_\g \in {W_0'}^{J'}$ for $\g \in (\ca_\phi)_{\max, J'}$.
\end{lem}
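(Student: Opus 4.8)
The plan is to verify directly the defining inequalities of ${W_0'}^{J'}$: I will show $z s_\g(\d) > 0$ and $z r_\g(\d) > 0$ for every simple root $\d \in J'$, reducing the statement for $s_\g$ (and one half of that for $r_\g$) to the simply laced Lemma \ref{f3} and the remaining half to the combinatorics of Lemma \ref{o2}. Throughout I will use that $\iota$ commutes with $z$ (since $z = \iota(z)$) and preserves $\Phi'_{J'}$, heights, the order $\leq_{J'}$ and the sign of a root; consequently $\iota(\ca_\phi) = \ca_{\iota(\phi)}$ and $\iota$ carries $(\ca_\phi)_{\max, J'}$ bijectively onto $(\ca_{\iota(\phi)})_{\max, J'}$. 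I will also use that, since $\SS_0'$ has no $\iota$-swapped pair of adjacent simple roots, $r_\g = s_\g$ if $\g = \iota(\g)$ and $r_\g = s_\g s_{\iota(\g)}$ with $\g \perp \iota(\g)$ if $\g \neq \iota(\g)$; and that whenever $\iota$ enters one has $\g - \iota(\g) \in \ZZ \Phi'_{J'}$, so Lemma \ref{o2} applies to $\g$ — write $\xi = \g \wedge \iota(\g)$ (so $\xi \in {\Phi'}^+ - \Phi'_{J'}$) and $\g = \xi + \varepsilon$, $\iota(\g) = \xi + \iota(\varepsilon)$ with $\varepsilon \in \Phi_{J'}^{\prime +}$, $\varepsilon \neq 0$ when $\g \neq \iota(\g)$.

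First I would prove $z s_\g \in {W_0'}^{J'}$ by running the argument in the proof of Lemma \ref{f3} inside $\Phi'$. If $z s_\g(\d) < 0$ for some $\d \in \Phi_{J'}^{\prime +}$, then since $z(\d) > 0$, $z(\g) < 0$ and $\Phi'$ is simply laced one must have $\<\d^\vee, \g\> = -1$, hence $\g + \d \in \Phi'$; but $\g + \d$ again lies in $\ca_\phi$ (it is $\equiv \phi \bmod \ZZ\Phi'_{J'}$ and $z(\g + \d) = z s_\g(\d) < 0$) and $\g + \d >_{J'} \g$, contradicting $\g \in (\ca_\phi)_{\max, J'}$. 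This works whatever the sign of $\g$. Applying $\iota$ (and $\iota(\g) \in (\ca_{\iota(\phi)})_{\max, J'}$) gives $z s_{\iota(\g)} \in {W_0'}^{J'}$ likewise. If $\g = \iota(\g)$ we are done; so assume $\g \neq \iota(\g)$, $r_\g = s_\g s_{\iota(\g)}$.

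Next, for $z r_\g \in {W_0'}^{J'}$, suppose $z r_\g(\d) < 0$ for some simple $\d \in J'$. Put $a = \<\d^\vee, \g\>$, $b = \<\d^\vee, \iota(\g)\> \in \{-1, 0, 1\}$; using $\g \perp \iota(\g)$,
$$ z r_\g(\d) = z(\d) - a\, z(\g) - b\, z(\iota(\g)) = z s_\g(\d) + z s_{\iota(\g)}(\d) - z(\d). $$
Since $z s_\g(\d), z s_{\iota(\g)}(\d)$ and $z(\d)$ are all positive roots, a quick run through the possibilities for $(a,b)$ shows that $z r_\g(\d) < 0$ forces $a = b = -1$ (for instance if $a = 0$ then $z r_\g(\d) = z s_{\iota(\g)}(\d) > 0$, and if $a = 1$ then $z r_\g(\d) = z s_{\iota(\g)}(\d) - z(\g) > 0$). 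Hence $\<\d^\vee, \g\> = \<\d^\vee, \iota(\g)\> = -1$, and $z r_\g(\d) = z(\rho)$ with $\rho := \d + \g + \iota(\g) = \d + 2\xi + \varepsilon + \iota(\varepsilon)$, a positive root not in $\Phi'_{J'}$. By the ``Moreover'' part of Lemma \ref{o2}, $\d = \iota(\d)$ and $\<\d^\vee, \xi\> = -1$, so $\xi + \d \in \Phi'$ is an $\iota$-fixed positive root outside $\Phi'_{J'}$, $\equiv \phi \bmod \ZZ\Phi'_{J'}$, with $\xi + \d >_{J'} \xi$. In case (2) we note $\rho \equiv \phi + \iota(\phi) \bmod \ZZ\Phi'_{J'}$ and $z(\rho) < 0$, i.e.\ $\rho \in \ca_{\phi + \iota(\phi)} = \emptyset$ — a contradiction, which finishes (2). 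In case (1), the maximality $\xi \in (\ca_\phi^\iota)_{\max, J'}$ forces $z(\xi + \d) > 0$ (else $\xi + \d$ would be a strictly larger member of $\ca_\phi^\iota$), and I would then contradict $z(\rho) < 0$ by playing the positive quantities $z(\xi + \d)$, $z(\g + \d) = z s_\g(\d)$, $z(\iota(\g) + \d) = z s_{\iota(\g)}(\d)$ against the decomposition $\rho = (\xi + \d) + \xi + \varepsilon + \iota(\varepsilon)$, using the $\iota$-invariance of $\rho$ and the support/height constraints on $\xi, \varepsilon$ supplied by Lemma \ref{o2}.

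The main obstacle is that last step, case (1). In case (2) the hypothesis $\ca_{\phi + \iota(\phi)} = \emptyset$ eliminates the offending root $\rho$ at once, whereas in case (1) one only controls the $\iota$-fixed classes $\ca_\phi^\iota$, and turning $z(\rho) < 0$ into a contradiction looks to require a genuinely delicate argument — most likely an induction on the height of $\g$ (or of $\phi$), or an appeal to explicit combinatorial facts of the sort gathered in the Appendix — that combines the maximality of $\xi = \g \wedge \iota(\g)$ with the precise shape of $\rho$ furnished by Lemma \ref{o2}. Once $z s_\g, z r_\g \in {W_0'}^{J'}$ is established, the lemma is proved.
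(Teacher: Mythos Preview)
Your approach matches the paper's exactly through the reduction to $\<\d^\vee,\g\>=\<\d^\vee,\iota(\g)\>=-1$, the appeal to Lemma~\ref{o2} to get $\d=\iota(\d)$ and $\<\d^\vee,\xi\>=-1$, and the disposal of case~(2) via $\rho\in\ca_{\phi+\iota(\phi)}=\emptyset$. The only place you fall short is the ending of case~(1), which you flag as an obstacle requiring induction or a case analysis; in fact it is a two-line computation and you already have all the ingredients.

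The missing observation is that $\xi+\varepsilon+\iota(\varepsilon)$ is itself a root. Since $\Phi'$ is simply laced, $\<\g^\vee,\iota(\varepsilon)\>=\<\iota(\varepsilon)^\vee,\g\>=\<\iota(\varepsilon)^\vee,\xi\>+\<\iota(\varepsilon)^\vee,\varepsilon\>=-1+0=-1$ (the supports of $\varepsilon$ and $\iota(\varepsilon)$ are disjoint by Lemma~\ref{exchange}), so $\g+\iota(\varepsilon)=\xi+\varepsilon+\iota(\varepsilon)\in\Phi'^+$. This root is $\iota$-fixed, lies in the same class modulo $\ZZ\Phi'_{J'}$ as $\xi$, and satisfies $\xi+\varepsilon+\iota(\varepsilon)>_{J'}\xi$. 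Hence the \emph{same} maximality hypothesis $\xi\in(\ca_\phi^\iota)_{\max,J'}$ that you already used for $\xi+\d$ forces $z(\xi+\varepsilon+\iota(\varepsilon))>0$. Now your decomposition finishes the job immediately:
\[
z(\rho)=z(\xi+\d)+z(\xi+\varepsilon+\iota(\varepsilon))>0,
\]
contradicting $z(\rho)<0$. This is precisely the paper's argument; no induction or appendix combinatorics is needed.
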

\begin{proof}
First we have $z s_\g \in {W_0'}^{J'}$ by Lemma \ref{f3}.

(1) Set $\g_0=\g \wedge \iota(\g) \in (\ca_\phi^\iota)_{\max, J'}$. By Lemma \ref{o2}, $\g=\g_0+\varepsilon$ for some $\varepsilon \in \Phi_{J'}^{\prime +} \cup \{0\}$. It remains to show $z r_\g \in {W_0'}^{J'}$. If it is false, then $\g \neq \iota(\g)$ and $z r_\g(\d)<0$ for some $\d \in J'$. Since $z s_\g, z s_{\iota(\g)} \in {W_0'}^{J'}$ and $z(\g), z(\iota(\g)) < 0$, we have $\<\d^\vee, \g\>=\<\d^\vee, \iota(\g)\>=-1$. By Lemma \ref{o2}, $\<\d, \g_0\>=-1$ and $\d=\iota(\d)$. Due to the choice of $\g_0$, we have $zs_\d(\g_0)=z(\g_0+\d)>0$ and $z r_\varepsilon (\g_0)=z(\g_0+\varepsilon+\iota(\varepsilon)) > 0$. Thus \begin{align*}z r_\g(\d)&=z(\g+\d+\iota(\g))=z(\d+2\g_0+\varepsilon+\iota(\varepsilon)) \\ &=z(\d+\g_0)+z(\g_0+\varepsilon+\iota(\varepsilon)) > 0,\end{align*} which is a contradiction.

(2) Assume $\ca_{\phi+\iota(\phi)}=\emptyset$. As in (1) above, if $z r_\g \notin {W_0'}^{J'}$, then $\g \neq \iota(\g)$ and there exists $\d \in \Phi_{J'}^{\prime +}$ such that $zr_\g(\d)=z(\d+\g+\iota(\g))< 0$, that is, $\d+\g+\iota(\g) \in \ca_{\phi+\iota(\phi)}$. This is a contradiction.
\end{proof}

\subsection{} \label{fix}
Define $D=\{\eta \in \SS_0'; z(\eta) < 0\} \subseteq \SS_0' - J'$ and $E=\{\eta \in D; \dist(\eta, \iota(\eta))=d\}$, where $d=\min\{\dist(\eta, \iota(\eta)); \eta \in D\}$. We choose $\a \in E$ such that $\<\mu, \a\> \le \<\mu, \eta\>$ for any $\eta \in E$. Let $\a=\d_1, \d_2, \dots, \d_m=\iota(\a)$ be a geodesic in $\SS_0'$, where $m$ is a positive odd integer. Set $\th=\d_1+\cdots+\d_m$ and $k_0=(m+1)/2$. By the choice of $\a$, we have $z(\d_i) > 0$ for $2 \le i \le m-1$.
\begin{lem} \label{o1}
One of the following cases occurs:

(1) $\a \neq \iota(\a)$ and $\<\mu, \th'\> \ge 0$ with $\th'=\th-\a-\iota(\a)$. Moreover, $\mu+\a^\vee+\iota(\a^\vee) \preceq \l$ (resp. $\mu+\th^\vee \preceq \l$) if $\<\mu, \th'\> \ge 1$ (resp. $\<\mu, \th'\> = 0$);

(2) $\a=\iota(\a)$ and $\mu+\a^\vee \preceq \l$;

(3) $\SS_0'$ is of type $E_6$, $\a=\a_2$, $\<\mu, \a_4\>=-1$ and $\mu+\a^\vee+\a_4^\vee \preceq \l$;

(4) $\SS_0'$ is of type $D_n$, $\{\a, \iota(\a)\} = \{\a_{n-1}, \a_n\}$ with $\<\mu, \a\> \ge 0$, $\<\mu, \a_{n-2}\>=-\<\mu, \a_{n-d}\>=1$ with $3 \le d \le n-2$, $\a_{n-2}, \a_i \in J$, $\<\mu, \a_i\>=0$ for $n-d+1 \le i \le n-3$. Moreover, $\mu+  \a^\vee + \iota(\a^\vee) + \a_{n-2}^\vee + \cdots + \a_{n-d}^\vee \preceq \l$.
\end{lem}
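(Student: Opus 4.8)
The plan is to prove Lemma \ref{o1} by a direct case-by-case inspection over the three admissible simply-laced types for $\SS_0'$ (namely $A_{2k+1}$, $D_k$, $E_6$), running parallel to the proof of Lemma \ref{seq}. Each of the conclusions (1)--(4) asserts that $\mu+\psi\preceq\l$ for a prescribed sum $\psi$ of simple coroots of $\Phi'$, so by Lemma \ref{ind}(3) it suffices in each case to verify that $\mu+\psi\le\l$ and that $\mu+\psi$ is weakly dominant. For the first point, Lemma \ref{add-simple} applied to the folded simple root $\underline\a\in\SS_0-J$ — together with $\<\a,\iota(\a)\>=0$, which identifies the coroot of $\underline\a$ with $\a^\vee+\iota(\a^\vee)$ when $\a\ne\iota(\a)$ and with $\a^\vee$ when $\a=\iota(\a)$ — gives the baseline $\mu+\a^\vee+\iota(\a^\vee)\le\l$, respectively $\mu+\a^\vee\le\l$; and by Lemma \ref{ind}(1) every further step of the chasing relation $\to_+$ (adding a simple coroot $\g^\vee$ at a node where the running pairing is $\le-1$) preserves ``$\le\l$''. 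Thus the proof reduces to exhibiting, case by case, a $\to_+$-chain from the baseline to a weakly dominant element of the prescribed shape; throughout one uses freely that $\mu=\iota(\mu)$ is weakly dominant, $J'$-dominant and $J'$-minuscule for $\Phi'$, that $z=\iota(z)\in{W_0'}^{J'}$, and the running setup of this section.

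Consider first $\a=\iota(\a)$, so $m=1$ and $\a=\d_{k_0}$ is an $\iota$-fixed node; this yields conclusions (2) and (3). Since $\mu$ is weakly dominant and $\<\a^\vee,\g\>\ge-1$ for every positive root $\g$, the element $\chi:=\mu+\a^\vee$ can fail to be weakly dominant only at a positive root $\g$ with $\<\mu,\g\>=-1$ and $\<\a^\vee,\g\>=-1$; the minimal such $\g$ is a neighbour of $\a$ lying outside $J'$ (as $\mu$ is $J'$-dominant), and since $\iota(\g)$ plays the same role one has $\<\mu,\g\>=\<\mu,\iota(\g)\>=-1$. Going through the admissible diagrams, using the minimality of $\<\mu,\a\>$ over $E$ and $J'$-dominance, shows this can occur only for $\SS_0'$ of type $E_6$ with $\a=\a_2$, $\g=\a_4$; one then checks $\mu+\a_2^\vee+\a_4^\vee$ is weakly dominant, the only delicate point being $\<\mu,\a_3\>=\<\mu,\a_5\>\ge0$, which follows from the constraints imposed on the choice of $\a$. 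This is (3); otherwise $\chi=\mu+\a^\vee$ is already weakly dominant, giving (2).

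Now suppose $\a\ne\iota(\a)$; then $m\ge3$ is odd (an $\iota$-swapped pair of simple roots is never adjacent), $d=m-1\ge2$, and $\iota(\d_i)=\d_{m+1-i}$, so $\th'=\d_2+\dots+\d_{m-1}$ is an $\iota$-fixed positive root. This yields conclusions (1) and (4). The first point is $\<\mu,\th'\>\ge0$; here one uses, besides the minimality of $d$ (which already gives $\d_i\notin D$, i.e. $z(\d_i)>0$, for $2\le i\le m-1$, since otherwise $\dist(\d_i,\iota(\d_i))=|m+1-2i|<d$), the weak dominance and $J'$-minuscule character of $\mu$, via an analysis parallel to the one behind Lemma \ref{seq}. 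One then splits on $\<\mu,\th'\>$. If $\<\mu,\th'\>\ge1$, a direct inspection gives that $\mu+\a^\vee+\iota(\a^\vee)$ is weakly dominant: its pairing with $\th'$ is $\<\mu,\th'\>-2\ge-1$, its pairings with the remaining neighbours of $\a$ and of $\iota(\a)$ are $\ge0$ by the constraints on the choice of $\a$ together with the weak- and $J'$-dominance of $\mu$, and all other pairings are unchanged; this is (1), first alternative. If $\<\mu,\th'\>=0$ one chases $\mu+\a^\vee+\iota(\a^\vee)$ along the geodesic, adding $\d_2^\vee,\dots,\d_{m-1}^\vee$ in turn to reach $\mu+\th^\vee$ — a legal $\to_+$-chain, the running pairing at each next node being $\le-1$ — and checks $\mu+\th^\vee$ is weakly dominant; this succeeds in every type except that in type $D_n$ with $\{\a,\iota(\a)\}=\{\a_{n-1},\a_n\}$ the chase is forced to continue along the tail $\a_{n-2},\a_{n-3},\dots,\a_{n-d}$ until the pairing stabilizes, which produces the extra chain $\a_{n-2}^\vee+\dots+\a_{n-d}^\vee$ together with the arithmetic conditions $\<\mu,\a_{n-2}\>=1$, $\<\mu,\a_{n-d}\>=-1$, $\<\mu,\a_i\>=0$ in between, the memberships $\a_{n-2},\a_i\in J$ following from $\mu$ being $J'$-minuscule. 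This is (4).

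The main obstacle is the genuinely case-dependent part: checking weak dominance at the ``outer'' neighbours of $\a$ and of $\iota(\a)$ is precisely what forces the exceptional configurations in type $E_6$ (conclusion (3)) and in type $D_n$ (conclusion (4)), and it is there that one must use the precise constraints imposed on the chosen $\a$ by the setup of this section. Establishing $\<\mu,\th'\>\ge0$ — the folded counterpart of the analysis behind Lemma \ref{seq} — and controlling the chase along the $D_n$-tail are the other points requiring care; the uniform part of the argument, conclusions (1) and (2), is routine once the reduction of the first paragraph is in place.
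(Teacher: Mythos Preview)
Your high-level strategy matches the paper's exactly: case-by-case over the type of $\SS_0'$, reduce each conclusion to ``$\mu+\psi\le\l$ and $\mu+\psi$ weakly dominant'' via Lemma \ref{ind}(1),(3), with the baseline $\le\l$ supplied by Lemma \ref{add-simple}. The paper's proof in \S\ref{prf-o1} proceeds in precisely this way.

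However, there is a real gap in how you justify the weak-dominance checks. You repeatedly attribute the needed inequalities (e.g.\ that the pairing of $\mu$ with the \emph{outer} neighbours of $\a$ and $\iota(\a)$ is $\ge0$, or that $\<\mu,\th'\>\ge0$, or in type $E_6$ that $\<\mu,\a_3\>=\<\mu,\a_5\>\ge0$) to ``the constraints on the choice of $\a$'', i.e.\ the minimality of $d$ and of $\<\mu,\a\>$ over $E$. But those constraints are conditions on $z$ (which simple roots lie in $D$), not on $\mu$; they cannot rule out, say, $\<\mu,\a_{i-1}\>=-1$ for an outer neighbour $\a_{i-1}\notin J'$ that happens not to lie in $E$. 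Concretely, in type $A$ with $\a=\a_i$ and outer neighbour $\a_{i-1}$: nothing in your list forbids $\<\mu,\a_{i-1}\>=-1$, and then $\<\mu+\a^\vee+\iota(\a^\vee),\a_{i-1}\>=-2$, killing weak dominance. Similarly, in type $E_6$ with $\a=\a_2$: the minimality of $\<\mu,\a\>$ over $E=D^\iota$ says nothing about $\<\mu,\a_3\>$, since $\a_3\ne\iota(\a_3)$.

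The ingredient you are missing is Lemma \ref{plus}: the dominance of $\nu_{\tw}=pr_{J'}(\mu)$ forces $\sum_H\<\mu|_H,\a'\><-1$ for any $\a'\in\SS_0'-J'$ with $\<\mu,\a'\>=-1$, and in types $A$ and $D$ (away from the fork) this yields $(\a,\a')\cap D_\mu^+\ne\emptyset$. That is what rules out $\<\mu,\a_{i-1}\>=-1$ when $\a_{i-1}$ is adjacent to $\a$, what gives $\<\mu,\th'\>\ge0$ (indeed $|D_\mu^-\cap(\a,\iota(\a))|<|D_\mu^+\cap(\a,\iota(\a))|$ when the latter is nonempty), what forces $\a_{n-1},\a_n\notin D_\mu^-$ in type $D_n$, and what produces the memberships $\a_{n-2},\a_i\in J'$ in conclusion~(4). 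The paper packages the weak-dominance verification through the combinatorial criterion~(c) and Lemmas \ref{criterion}, \ref{plus}, \ref{plus-simple}. Once you invoke Lemma \ref{plus} in place of ``the constraints on the choice of $\a$'', your outline becomes correct and coincides with the paper's argument.
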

The proof is given in \S \ref{prf-o1}.

\begin{lem} \label{o0}
If $\a \neq \iota(\a)$ and $\ca_{\a+\iota(\a)}=\emptyset$, then $\supp(\g) \subseteq \SS_0-(\SS_0')^\iota$ for $\g \in \ca_\a$. In particular, there exists $u \in W_{J'-(\SS_0')^\iota}$ such that $u=\iota(u)$ and $u(\a)=\g$.
\end{lem}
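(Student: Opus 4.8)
The plan is to argue by the same kind of induction on the height of $\g$ that was used in Lemma \ref{o2}, exploiting the hypothesis $\ca_{\a+\iota(\a)}=\emptyset$ to rule out the appearance of any $\iota$-fixed simple root in $\supp(\g)$. Concretely, I would start from $\g\in\ca_\a$, so $z(\g)<0$ and $\g-\a\in\ZZ\Phi'_{J'}$. Since $\a\in\SS_0'-(\SS_0')^\iota$, its support lies in one of the two connected components of $\SS_0'-(\SS_0')^\iota$ singled out by Lemma \ref{exchange}; the claim is that all of $\supp(\g)$ stays inside that same component, i.e. $\supp(\g)\cap(\SS_0')^\iota=\emptyset$. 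Suppose not, and let $\g'$ be obtained from $\g$ by peeling off simple roots of $J'$ (as in the proof of Lemma \ref{o2}, writing $\g=\g'+\sum\text{(simple roots in }J')$) down to the first stage where an $\iota$-fixed simple root $\d\in(\SS_0')^\iota\cap J'$ enters the support. At that stage $\g'-\d\in\Phi'^+$ and $\d$ is connected to $\supp(\g'-\d)\subseteq\SS_0'-(\SS_0')^\iota$.

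The key step will be to show that $\d+\g'+\iota(\g')$ (or an appropriate $W_{J'}$-translate inside $\ca_{\a+\iota(\a)}$) is a root lying in $\ca_{\a+\iota(\a)}$, contradicting the hypothesis. For this I would use: first, $\g'\neq\iota(\g')$ at this stage (since its support is still in a single component, disjoint from $(\SS_0')^\iota$ except possibly through $\d$ itself, which has just been reached), and the computation $\<\d^\vee,\g'\>=-1$, forced because $\d$ is a simple root of the simply laced $\Phi'$ adjacent to but not in $\supp(\g'-\d)$, so that $\d+\g'\in\Phi'^+$ — in fact the argument of Lemma \ref{o2}, Case (2), applies verbatim. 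Then $\supp(\d+\g')$ meets both components of $\SS_0'-(\SS_0')^\iota$ through $\d$, so $\d+\g'+\iota(\d+\g')=\d+\g'+\iota(\g')$ is again a (positive) root by the $A_2$/$D$-type local computation, and it differs from $\a+\iota(\a)$ by an element of $\ZZ\Phi'_{J'}$ since $\g'-\a\in\ZZ\Phi'_{J'}$. Finally $z(\d+\g'+\iota(\g'))<0$: because $z(\g)<0$ while $\g\ge_{J'}\g'$ and $z\in{W_0'}^{J'}$ force $z(\g')<0$, hence $z(\iota(\g'))=\iota z(\g')<0$, and $z(\d)$ has the right sign since $\d\in J'$. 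Thus $\d+\g'+\iota(\g')\in\ca_{\a+\iota(\a)}\neq\emptyset$, a contradiction. This proves $\supp(\g)\subseteq\SS_0'-(\SS_0')^\iota$.

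For the ``In particular'' clause: since $\g-\a\in\ZZ\Phi'_{J'}$ with both $\g,\a\in\Phi'^+-\Phi'_{J'}$ and (by what we just proved together with $\supp(\a)\subseteq\SS_0'-(\SS_0')^\iota$) both supported away from $(\SS_0')^\iota$, Lemma \ref{f5} gives $u_0\in W_{J'}$ with $u_0(\a)=\g$; choosing $\a,\g$ to be $J'$-dominant representatives and writing $u_0$ as a product of simple reflections through a $J'$-dominance chain, all the simple roots that occur lie in $J'-(\SS_0')^\iota$, so in fact $u_0\in W_{J'-(\SS_0')^\iota}$. Set $u=u_0$; then $\iota(u)(\iota(\a))$ and $u(\a)$ must be handled together — one replaces $u$ by the element of $W_{J'-(\SS_0')^\iota}$ obtained by symmetrizing, using that $\iota$ permutes the two components and that $\a$, $\g$ each live in a single component, so that $\iota(u)$ carries $\iota(\a)$ to $\iota(\g)$ and the product $u$ can be arranged to be $\iota$-stable as an element of $W_{J'-(\SS_0')^\iota}\cong W_{\text{comp}_1}\times W_{\text{comp}_2}$ with $\iota$ swapping the factors. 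The main obstacle I anticipate is the bookkeeping in this last symmetrization step: making precise that the minimal $u$ with $u(\a)=\g$ can simultaneously be taken $\iota$-invariant requires care, and I would handle it by observing that $\g$ and $\iota(\g)$ have disjoint supports (Lemma \ref{exchange}), so $u$ acting on $\mathrm{comp}_1$ and $\iota(u)$ acting on $\mathrm{comp}_2$ are independent and their product is automatically $\iota$-fixed.
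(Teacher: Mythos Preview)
Your overall strategy---produce from $\g$ an element of $\ca_{\a+\iota(\a)}$ and derive a contradiction---is the same as the paper's, but your execution has two genuine errors and is more circuitous than necessary.

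First, the expression $\d+\g'+\iota(\g')$ is wrong. By your own description $\d\in\supp(\g')$, and since $\iota(\d)=\d$ also $\d\in\supp(\iota(\g'))$; hence $\d+\g'+\iota(\g')$ has $\d$ with coefficient at least $3$, which is not a root in the relevant cases (e.g.\ in type $D_n$ with $\d=\a_{n-2}$ one gets $3\a_{n-2}+\a_{n-1}+\a_n$). What you actually want is $\g'+\iota(\g'-\d)=\d+(\g'-\d)+\iota(\g'-\d)$. Second, the phrase ``$z(\d)$ has the right sign since $\d\in J'$'' is a non sequitur: $\d\in J'$ gives $z(\d)>0$, not $<0$, so this does not help conclude negativity. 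The correct argument is that $\g'-\d\le_{J'}\g'\le_{J'}\g$ together with $z\in (W_0')^{J'}$ gives $z(\g'-\d)\le z(\g')\le z(\g)<0$; applying $\iota$ (which commutes with $z$) then gives $z(\iota(\g'-\d))<0$, and the sum of two negative roots is a negative root.

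The paper avoids this bookkeeping entirely. Rather than an induction on height, it observes that there is a single distinguished $\iota$-fixed simple root $\d_{k_0}$ (the midpoint of the geodesic $\a=\d_1,\dots,\d_m=\iota(\a)$) such that the two components of $\SS_0'-\{\d_{k_0}\}$ containing $\a$ and $\iota(\a)$ are swapped by $\iota$ and each lies in $\SS_0'-(\SS_0')^\iota$. It then shows directly that $\d_{k_0}\notin\supp(\g)$: otherwise $\g\ge\d_1+\cdots+\d_{k_0}$ forces $\d_2,\dots,\d_{k_0}\in J'$, whence $z(\d_1+\cdots+\d_{k_0})\le z(\g)<0$ and $z(\iota(\d_1+\cdots+\d_{k_0-1}))<0$, so $z(\th)<0$ and $\th\in\ca_{\a+\iota(\a)}$.

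For the ``In particular'' clause your symmetrization plan is unnecessarily indirect. Once $\supp(\g)$ lies in the $\a$-component of $\SS_0'-(\SS_0')^\iota$, write $\g=y(\a)$ with $\supp(y)$ in that component; then $y$ and $\iota(y)$ commute (disjoint, non-adjacent supports), $\iota(y)$ fixes $\a$, and $u:=y\,\iota(y)$ satisfies $u(\a)=\g$ and $\iota(u)=u$ in one line---no need to pass through $J'$-dominance chains or worry about minimal representatives.
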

\begin{proof}
Recall $k_0=(m+1)/2$ and $\d_{k_0}=\iota(\d_{k_0})$. One checks (by the type of $\SS_0'$) that $\iota$ exchanges the two connected components of $\SS_0' - \{\d_{k_0}\}$ which contain $\a$ and $\iota(\a)$ respectively. Thus it suffices to show $\d_{k_0} \notin \supp(\g)$. Assume Otherwise. Then $\g \ge \d_1+\cdots+\d_{k_0}$ (since $\supp(\g)$ is connected) and hence $\d_2, \dots, \d_{k_0} \in J'$. Thus $z(\d_1+\cdots+\d_{k_0})<0$ and $z(\iota(\d_1+\cdots+\d_{k_0-1}))=\iota(z(\d_1+\cdots+\d_{k_0-1})) < 0$. Therefore, $z(\th)=z(\d_1+\cdots+\d_{k_0} + \iota(\d_1+\cdots+\d_{k_0-1})) < 0$, that is, $\th \in \ca_{\a+\iota(\a)} \neq \emptyset$, a contradiction. So $\g=y(\a)$ for some $y \in W_{J'}$ whose support $\supp(y)$ lies in the connected component of $\SS_0'-(\SS_0')^\iota$ containing $\a$. Set $u=y \iota(y) =\iota(y) y$. Then $u=\iota(u)$ and $u(\a)=\g$ as desired.
\end{proof}

%\begin{lem} \label{o4}We have $z r_\g \tw z\i \in \Adm(\l)$.\end{lem}
%\begin{proof}Let $u' \in W_J$ such that $u'=\iota(u')$ and $u'(\g+\iota(\g))$ is $J$-dominant. So $\<\mu, u'(\g)\>=\<\mu, u'(\iota(\g))\>=\frac{1}{2}\<\mu, u'(\g+\iota(\g))\> > 0$. Thus we have $$ t^{z {u'}\i(\mu)} \geq s_{z(-\g)} t^{z {u'}\i(\mu)} \geq s_{z(-\iota(\g))} s_{-z(\g)} t^{z {u'}\i(\mu)} \geq  z r_\g \tw z\i$$ as desired.\end{proof}

\begin{lem} \label{o5}
Let $\g$ be as in Lemma \ref{o3} (1). If $\mu+\g_{J'}^\vee \preceq \l$, then $z \tw r_\g z\i \in \Adm(\l)$. Here $\Adm(\l)$ is defined with respect to $\SS_0$.
\end{lem}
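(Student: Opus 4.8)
The plan is to deduce the statement from Corollary~\ref{f4} applied to the root system $\Phi$ itself (with respect to $\SS_0$), rather than to $\Phi'$. Put $\underline\g=(\g+\iota(\g))/2\in\Phi^+$, so that, viewed inside $W_0\subseteq W_0'$, one has $r_\g=s_{\underline\g}$ (this uses $\<\g,\iota(\g)\>=0$ when $\g\neq\iota(\g)$). First I would check the hypotheses of Corollary~\ref{f4}: that $\underline\g\in\Phi^+-\Phi_J$ (its coefficients are nonnegative, and $\supp(\underline\g)\not\subseteq J$ since $\supp(\g)\not\subseteq J'$); that $z(\underline\g)<0$ (if $\g=\iota(\g)$ this is $z(\g)<0$; otherwise $z(\g)$ and $z(\iota(\g))=\iota(z(\g))$ are two distinct negative roots and $z(\underline\g)=\underline{z(\g)}$ is again negative); and that $z\geq z\,s_{\underline\g}=z r_\g\in W_0^J$, which is exactly the restriction to $\SS_0$ of the membership $z r_\g\in{W_0'}^{J'}$ furnished by Lemma~\ref{o3}(1). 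Granting these, Corollary~\ref{f4} reduces the whole statement to showing
\[\mu+(\underline\g)_J^\vee\preceq\l\qquad\text{(with respect to }\Phi\text{)},\]
where $(\underline\g)_J$ denotes the $J$-antidominant $W_J$-conjugate of $\underline\g$ in $\Phi$.

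The remaining point is to deduce this from the hypothesis $\mu+\g_{J'}^\vee\preceq\l$ in $\Phi'$, and I would argue in three moves. First, $\underline{\g_{J'}}$ is $J$-antidominant for $\Phi$: folding sends $J'$-antidominant roots to $J$-antidominant ones, because each simple coroot of $\Phi_J$ is either a simple coroot of $\Phi'_{J'}$ or the sum of an orthogonal $\iota$-pair of such. Second, $\underline{\g_{J'}}$ and $\underline\g$ lie in the same coset of $\ZZ\Phi_J$ (as $\g-\g_{J'}\in\ZZ\Phi'_{J'}$ folds into $\ZZ\Phi_J$), so $(\underline\g)_J=\underline{\g_{J'}}$; when $\Phi_J$ is simply laced this follows as in Lemma~\ref{f5}, and in general from the fact that an $\iota$-stable $W_{J'}$-orbit of roots meets the $\iota$-fixed vectors precisely in a $W_J$-orbit, together with uniqueness of the antidominant representative (here the structure of $\g\wedge\iota(\g)$ from Lemma~\ref{o2} can be used to make the identification explicit). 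Third, under $Y\subseteq(Y')^\iota$ the coroot $(\underline{\g_{J'}})^\vee$ restricts to $\g_{J'}^\vee+\iota(\g_{J'}^\vee)$ (resp. to $\g_{J'}^\vee$ if $\g_{J'}=\iota(\g_{J'})$), and dominant $W_0$- and $W_0'$-conjugates of elements of $Y$ coincide (since $Y\subseteq(Y')^\iota$ and $W_0=(W_0')^\iota$), so $\mu+\g_{J'}^\vee\preceq\l$ in $\Phi'$ propagates to $\mu+(\underline\g)_J^\vee\preceq\l$ in $\Phi$. Alternatively one may bypass the comparison of the two dominance orders altogether and instead re-run the ``$\to_+$'' chain calculus of Lemma~\ref{ind} inside $\Phi$, exactly as in \S\ref{simply2}, starting from $\mu+\underline\g^\vee$ and using that $\mu$ is weakly dominant.

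The main obstacle is the bookkeeping in the second paragraph: folding multiplies certain coroots by $2$ (the coroot of a short root of $\Phi$ arising from an orthogonal $\iota$-pair is the \emph{sum} of the two $\Phi'$-coroots), and one must check that passing between $\ZZ\Phi^\vee$ and $\ZZ{\Phi'}^\vee$, and between the two dominance orders, never weakens the inequality $\mu+\g_{J'}^\vee\preceq\l$. I also expect the identification $(\underline\g)_J=\underline{\g_{J'}}$ to need some care in the few cases where $\Phi_J$ is not simply laced, but Lemma~\ref{o1} (which pins down $\a$, hence $\g$, very explicitly, case by case on the type of $\SS_0'$) together with Lemma~\ref{o2} should reduce this to a routine check. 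Everything in the first paragraph is immediate from Lemma~\ref{o3}(1) and the behaviour of $\iota$ on positive roots.
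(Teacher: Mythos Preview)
Your reduction to Corollary~\ref{f4} is correct when $\g=\iota(\g)$, and indeed that is exactly what the paper does in that case. The gap is in your ``second move'' when $\g\neq\iota(\g)$: the identification $(\underline\g)_J=\underline{\g_{J'}}$ is \emph{false}. Since $\g-\iota(\g)\in\ZZ\Phi'_{J'}$ (this comes from $\ca_\phi^\iota\neq\emptyset$), the $J'$-antidominant representative $\g_{J'}$ is $\iota$-fixed, hence a \emph{long} root of $\Phi$; but $\underline\g=(\g+\iota(\g))/2$ with $\g\neq\iota(\g)$ is a \emph{short} root of $\Phi$. So $\underline\g$ and $\g_{J'}=\underline{\g_{J'}}$ have different lengths and cannot lie in the same $W_J$-orbit. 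Concretely, take $\SS_0'$ of type $D_5$ with $\iota$ exchanging $\a_4,\a_5$, $J'=\{\a_3,\a_4,\a_5\}$, and $\g=\a_2+\a_3+\a_4$: then $\g_{J'}=\a_2$ is long, while $\underline\g=\underline{\a_2}+\underline{\a_3}+\underline{\a_4}$ is short and already $J$-antidominant, so $(\underline\g)_J=\underline\g\neq\a_2$. At the level of coroots, $\underline\g^\vee=\g^\vee+\iota(\g)^\vee$, so $\mu+(\underline\g)_J^\vee\preceq\l$ is a genuinely stronger requirement than the hypothesis $\mu+\g_{J'}^\vee\preceq\l$; your third move cannot repair this, since the obstruction is already in the second.

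The paper's proof sidesteps this length mismatch entirely: for $\g\neq\iota(\g)$ it does \emph{not} invoke Corollary~\ref{f4} in $\Phi$, but instead uses the decomposition $\g=\zeta+\varepsilon$ with $\zeta=\g\wedge\iota(\g)$ from Lemma~\ref{o2}, chooses $u\in W_J$ with $u^{-1}(\zeta)=\g_{J'}$, and then produces by hand an explicit chain of Bruhat inequalities in $\tW'$ starting from $t^{z(u(\mu)+\zeta^\vee)}$ (or from $t^{zu(\mu)}$) and ending at $z\tw r_\g z^{-1}$, splitting into three cases according to the signs of $\<u(\mu),\zeta+\varepsilon+\iota(\varepsilon)\>$ and $\<u(\mu),\zeta+\varepsilon\>$. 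The key input making the chain land in $\Adm(\l)$ is precisely $\mu+\g_{J'}^\vee=\mu+\zeta^\vee\preceq\l$, together with the maximality of $\g$ (giving $z(\zeta+\varepsilon+\iota(\varepsilon))>0$). There does not appear to be a uniform shortcut through Corollary~\ref{f4}.
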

\begin{proof}
If $\g=\iota(\g)$, then $\g \in \Phi$ and $\g_J=\g_{J'}$. By Corollary \ref{f4}, we have $z \tw r_\g z\i = z \tw s_\g z\i \in \Adm(\l)$. Assume $\g \neq \iota(\g)$. Set $\zeta=\g \wedge \iota(\g) \in {\Phi'}^+$ and $\varepsilon=\g-\zeta \in \Phi_{J'}^{\prime +}$ (see Lemma \ref{o2}). Then $z(\zeta+\varepsilon+\iota(\varepsilon)) > 0$ by the maximality of $\g$ with respect to $\le_{J'}$. Since $\iota(\zeta)=\zeta$ and $\iota(\g_{J'})=\g_{J'}$, there exists $u \in W_J$ such that $u\i(\zeta)=\g_{J'}$. Since $u(\mu)$ is weakly dominant, one of the follows three cases occurs:

Case(1): $\<u(\mu), \zeta+\varepsilon+\iota(\varepsilon)\> \ge 1$. Then \begin{align*} t^{z (u(\mu)+\zeta^\vee)} &\geq t^{z (u(\mu)+\zeta^\vee)} t^{z(-\zeta^\vee)} s_{z(\zeta)} =t^{z u(\mu)} s_{z(\zeta)} \\ &\geq t^{z u(\mu)} s_{z(\zeta)} s_{z(\zeta+\varepsilon+\iota(\varepsilon))} = t^{z u(\mu)} s_{z(\varepsilon)} s_{z(\iota(\varepsilon))} s_{z(\g)} s_{z(\iota(\g))} \\ &\geq z \tw r_\g z\i \end{align*} as desired, where the last inequality follows from Lemma \ref{f4} since $\tw \leq_J t^{u(\mu)} r_\varepsilon$.

Case(2): $\<u(\mu), \zeta+\varepsilon+\iota(\varepsilon)\> \le 0$ and $\<u(\mu), \zeta+\varepsilon\>=\<u(\mu), \zeta+\iota(\varepsilon)\> \ge 0$. Then we have \begin{align*} t^{z (u(\mu)+\zeta^\vee)} &\geq t^{z u(\mu)} s_{z(\zeta)} \\ &\geq t^{z u(\mu)} s_{z(\zeta)} s_{z(\varepsilon)} \\ &\geq t^{z u(\mu)} s_{z(\zeta)} s_{z(\varepsilon)} s_{z(\iota(\varepsilon))} \\ &\geq t^{z u(\mu)} s_{z(\zeta)} s_{z(\varepsilon)} s_{z(\iota(\varepsilon))} s_{z(-\zeta)} = t^{z u(\mu)} s_{z(\g)} s_{z(\iota(\g))} \\ &\geq z \tw r_\g z\i \end{align*} as desired.

Case(3): $\<u(\mu), \g\>=-1$ and $\<u(\mu), \varepsilon\>=\<u(\mu), \iota(\varepsilon)\>=0$. Then we have $t^{z u(\mu)} \geq t^{z u(\mu)} s_{z(-\g)} s_{z(-\iota(\g))} \geq z \tw r_\g z\i$ as desired.
\end{proof}

The following lemma is proved in \S \ref{prf-zeta}.
\begin{lem} \label{zeta}
Let $\zeta \in {\Phi'}^+ -\Phi_{J'}'$ be such that $\zeta_{J'}=\iota(\zeta_{J'})$ and $\ca_{2 \zeta} \neq \emptyset$. If $\<\mu, \zeta_{J'}\>=-1$ and $\<\mu, \vartheta_{2 \zeta}\> \ge 0$, then one of the following fails:

(1) $\<\mu, \underline\zeta\>=\<\mu, w(\underline\zeta)\>=0$, that is, $\<\mu, \zeta\>=\<\mu, w(\zeta)\>=0$;

(2) $\<\underline \zeta^\vee, w(\underline \zeta)\>=-1$, that is, $\{\<\zeta^\vee, w(\zeta)\>, \<\zeta^\vee, w\iota(\zeta)\>\}=\{0, -1\}$ if $\iota(\zeta) \neq \zeta$ and $\<\zeta, w(\zeta)\>=-1$ $\iota(\zeta) = \zeta$. In particular, either $\z+w(\z)$ or $\z+w\iota(\z)$ is a root of $\Phi'$.
\end{lem}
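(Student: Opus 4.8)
The plan is to argue by contradiction: assume both (1) and (2) hold and extract a contradiction from the hypotheses $\<\mu,\zeta_{J'}\>=-1$ and $\<\mu,\vartheta_{2\zeta}\>\ge 0$. First I would normalize. Since $\iota(\zeta_{J'})=\zeta_{J'}$ we have $\zeta-\iota(\zeta)\in\ZZ\Phi'_{J'}$, and all of the hypotheses as well as conditions (1), (2) are insensitive to replacing $\zeta$ by $\iota(\zeta)$ (note $\underline\zeta=\underline{\iota(\zeta)}$, $\ca_{2\zeta}=\ca_{2\iota(\zeta)}$, and $\<\mu,w\iota(\zeta)\>=\<\mu,\iota w(\zeta)\>=\<\mu,w(\zeta)\>$ using $w\iota=\iota w$ and $\iota(\mu)=\mu$). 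So by the ``in particular'' clause of (2) together with this symmetry I may assume $\<\zeta^\vee,w(\zeta)\>=-1$, i.e.\ $\g:=\zeta+w(\zeta)\in\Phi'$.

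The next step is a pair of reductions. Because $w=w_Kw_J$ lies in $W_{J'}$ (viewing $W_J$ as the $\iota$-fixed subgroup of $W_{J'}$, so that $w_J=w_0(W_{J'})$ and $w_K=w_0(W_{K'})$ in $W_0'$), we get $w(\zeta)\equiv\zeta$, hence $\g\equiv 2\zeta\equiv 2\zeta_{J'}\pmod{\ZZ\Phi'_{J'}}$; likewise $\g-\iota(\g)=(\zeta-\iota(\zeta))+w(\zeta-\iota(\zeta))\in\ZZ\Phi'_{J'}$, so Lemma \ref{o2} applies to $\g$. Assuming the non-degenerate situation $\g\notin\Phi'_{J'}$ (equivalently $2\zeta\notin\ZZ\Phi'_{J'}$), Lemma \ref{f5} identifies the $J'$-antidominant conjugate $\g_{J'}$ of $\g$ with $\vartheta_{2\zeta}$ — here one also needs $z(\g_{J'})<0$, which follows since $\ca_{2\zeta}\neq\emptyset$ forces its antidominant representative (a Lemma \ref{f3}-type point) to lie in $\ca_{2\zeta}$. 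Then I would compute pairings with $\mu$: since $\mathrm{Stab}_{W_{J'}}(\mu)=W_{K'}$ gives $w_K\mu=\mu$, one has $w\i\mu=w_J\mu=\mu_{J'}$, so $\<\mu,w(\zeta)\>=\<\mu_{J'},\zeta\>$; by (1) this vanishes, whence $\<\mu,\g\>=\<\mu,\zeta\>+\<\mu,w(\zeta)\>=0$. As $\g-\g_{J'}\in\ZZ_{\ge 0}(\Phi'^+_{J'})$ and $\mu$ is $J'$-dominant, $\<\mu,\vartheta_{2\zeta}\>=\<\mu,\g_{J'}\>\le\<\mu,\g\>=0$; combined with the hypothesis $\<\mu,\vartheta_{2\zeta}\>\ge 0$ this yields $\<\mu,\g_{J'}\>=0$ and, moreover, that $\g-\g_{J'}$ is a nonnegative combination of roots of $\Phi'^+_{J'}$ each orthogonal to $\mu$.

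At this stage I have two $\iota$-fixed $J'$-antidominant roots: $\zeta_{J'}$ with $\<\mu,\zeta_{J'}\>=-1$, and $\g_{J'}$ with $\<\mu,\g_{J'}\>=0$ and $\g_{J'}\equiv 2\zeta_{J'}\pmod{\ZZ\Phi'_{J'}}$ (note $\vartheta_{2\zeta}=\iota(\vartheta_{2\zeta})$ by uniqueness, since $\iota$ preserves $\ca_{2\zeta}$ and $J'$-antidominance). The final step is to rule this out using that $\mu$ is $J'$-minuscule: the datum $\zeta_{J'}$ pins down which minuscule fundamental coweight of $\Phi'_{J'}$ occurs in $\mu|_{J'}$, and one must show that the ``doubled coset'' $2\zeta_{J'}$ then contains no root orthogonal to $\mu$, feeding in the structural output of Lemma \ref{o2} (in particular its ``Moreover'' part constraining $\<\d^\vee,\g\wedge\iota(\g)\>$). \textbf{This combinatorial verification is the main obstacle}, and I expect it to be genuinely case-by-case over the admissible simply-laced types $A_{2k+1}$, $D_n$, $E_6$ equipped with their diagram involution (this is presumably the content deferred to \S\ref{prf-zeta}). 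The degenerate configurations left aside above — $\g\in\Phi'_{J'}$, $\g<0$, or $\zeta=\iota(\zeta)$ — should be disposed of by minor variants of the same bookkeeping, reducing each to the non-degenerate case or to a direct contradiction with weak dominance of $\mu$.
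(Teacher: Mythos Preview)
Your reduction contains a real gap. The symmetry step is already wrong: replacing $\zeta$ by $\iota(\zeta)$ gives
\[
\langle(\iota\zeta)^\vee,w(\iota\zeta)\rangle=\langle\iota(\zeta^\vee),\iota w(\zeta)\rangle=\langle\zeta^\vee,w(\zeta)\rangle,
\]
so the swap does \emph{not} interchange the two pairings in (2); you cannot arrange $\langle\zeta^\vee,w(\zeta)\rangle=-1$ this way. This is easily repaired by treating $\gamma=\zeta+w(\zeta)$ and $\gamma=\zeta+w\iota(\zeta)$ in parallel, so it is minor.

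The serious problem is your ``final step.'' You propose to derive a contradiction purely from the package
\[
\zeta_{J'}=\iota(\zeta_{J'}),\qquad \langle\mu,\zeta_{J'}\rangle=-1,\qquad \langle\mu,\vartheta_{2\zeta}\rangle=0,\qquad \mu\ \text{$J'$-minuscule, weakly dominant, $\iota$-fixed},
\]
i.e.\ to show the doubled coset $2\zeta_{J'}+\ZZ\Phi'_{J'}$ contains no root orthogonal to $\mu$. But this package is \emph{satisfiable}: take $\SS_0''$ of type $D_5$ with $\iota$ swapping $\a_4,\a_5$, set $\zeta_{J'}=\a_3$ and $\mu=\o_1^\vee-\o_3^\vee+\o_4^\vee+\o_5^\vee$. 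Then $\vartheta_{2\zeta}=\a_2+2\a_3+\a_4+\a_5$ and $\langle\mu,\vartheta_{2\zeta}\rangle=0$, yet all your side conditions hold. So the information you retain after your reduction is too coarse to finish.

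What the paper does instead (and what you actually need) is a direct case analysis on the type of the connected component $\SS_0''$ of $J'\cup\{\zeta_{J'}\}$, which keeps track of $\zeta$ itself --- its support and how $w$ moves it --- not merely of $\zeta_{J'}$ and $\vartheta_{2\zeta}$. For instance in the $D_n$ case with $\zeta_{J'}=\a_{n-2}$ (precisely the configuration above for $n=5$), the paper splits on whether $\a_k\in\supp(\zeta)$: if so, condition (1) forces $\a_{n-1},\a_n\notin\supp(\zeta)$, whence $w(\zeta)\ge_{J''}\zeta_{J'}+\a_{n-1}+\a_n$ and $\langle\mu,w(\underline\zeta)\rangle\ge1$, contradicting (1); if not, then $\zeta\neq\iota(\zeta)$ so $\underline\zeta$ is short, while $\zeta+w(\zeta)$ is $\iota$-fixed so $\underline\zeta+w(\underline\zeta)$ is long, forcing $\langle\underline\zeta^\vee,w(\underline\zeta)\rangle=0$ and contradicting (2). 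None of this is visible from your reduced data. Your derivation of $\langle\mu,\vartheta_{2\zeta}\rangle\le0$ is correct and mildly useful, but it does not replace the case analysis --- you still have to go back to $\zeta$ and $w(\zeta)$ to finish.
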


Now we are ready to prove Proposition \ref{hyp'}.

\

Suppose Lemma \ref{o1} (1) occurs. We set $\th'=\th-\a-\iota(\a) \in {\Phi'}^+$. Then $\<\a^\vee, \th'\>=\<\a^\vee, \iota(\th')\>=-1$. We choose $\g \in {\Phi'}^+ - \Phi_{J'}'$ such that $z \geq z r_\g \in W_0^J$ and $z \overset {-z(\underline\g)} \longleftrightarrow z r_\g$ as follows.

Case(1): $\ca_{\a+\iota(\a)} \neq \emptyset$. Let $\g \in \ca_{\a+\iota(\a)}$ be as in Lemma \ref{o3} (1). Then $\g_{J'}=\th$ and $\th' \in \Phi_{J'}^{\prime +}$ (since $\supp(\g)$ is connected, which means $\g \ge \th$). Thus $\<\mu+\a^\vee+\iota(\a^\vee), \th'\> =\<\mu, \th'\> -2 \le -1$ (since $\mu$ is $J'$-minuscule) and hence $\mu+\th^\vee \preceq \mu+\a^\vee+\iota(\a^\vee)$. So we always have $\mu+\g_{J'}^\vee \preceq \l$. Then $z \tw r_\g z\i \in \Adm(\l)$ follows from Lemma \ref{o5}.

Case(2): $\ca_{\a+\iota(\a)}=\emptyset$ and $\<\mu, \th'\>=0$. Then $\mu+\th^\vee \preceq \l$. If $\ca_\th \neq \emptyset$, let $\g \in \ca_\th$ be as in Lemma \ref{o3} (1) (with $\phi=\th$). Then $z \tw r_\g z\i \in \Adm(\l)$ by Lemma \ref{o5}. Otherwise, we have $z(\th)>0$. Let $\g \in \ca_\a$ be as in Lemma \ref{o3} (2) (with $\phi=\a$). Then $z r_\g \in {W_0'}^{J'}$ since $\ca_{\a+\iota(\a)}=\emptyset$. Let $u \in W_{J'-(\SS_0')^\iota}$ be as in Lemma \ref{o0} such that $\iota(u)=u$ and $u(\a)=\g$. Noticing that $u(\th') > 0$ (since $\d_{k_0} \in \supp(\th') \cap (\SS_0')^\iota$), $\supp(u(\th')) \subseteq {J'} \cup \{\d_2, \dots, \d_{m-1}\}$ and $z({J'} \cup \{\d_2, \dots, \d_{m-1}\}) \subseteq {\Phi'}^+$, we have $z u(\th') > 0$. Moreover, since $\mu$ is weakly dominant, we have $\<\mu, \th\>=2\<\mu, \a\>+\<\mu, \th'\>=2\<\mu, \a\> \ge -1$, which implies $\<\mu, \a\> \ge 0$. Therefore, \begin{align*} t^{z(u(\mu)+\g^\vee+u({\th'})^\vee+\iota(\g^\vee))} & \geq t^{zu(\mu+{\th'}^\vee)} s_{z(\g)} s_{z(\iota(\g))} \\ & \geq t^{zu({\th'})^\vee} s_{zu(\th')} t^{zu(\mu+{\th'}^\vee)} s_{z(\g)} s_{z(\iota(\g))} = t^{zu(\mu)} s_{zu(\th')} s_{z(\g)} s_{z(\iota(\g))} \\ & \geq t^{zu(\mu)} s_{z(\g)} s_{z(\iota(\g))} \\ &\geq z \tw r_\g z\i, \end{align*} where the third inequality follows from the inequality $z(\g+u(\th')+\iota(\g))=z u(\th) \ge z(\th) > 0$ (since $\th$ is ${J'}$-antidominant).

Case(3): $\ca_{\a+\iota(\a)}=\emptyset$ and $\<\mu, \th'\> \ge 1$. Then $\mu+\a^\vee+\iota(\a^\vee) \preceq \l$. Let $\g \in \ca_\a$ be as in Lemma \ref{o3} (2) and let $u \in W_{J'-{\SS_0'}^\iota}$ be as in Case(2). We have \begin{align*} t^{z (u(\mu)+\g^\vee+\iota(\g^\vee))} & \geq t^{z (u(\mu)+\g^\vee+\iota(\g^\vee))} t^{z(-\g^\vee)} s_{z(\g)} = t^{z (u(\mu)+\iota(\g^\vee))} s_{z(\g)} \\ &\geq t^{z (u(\mu)+\iota(\g^\vee))} s_{z(\g)} t^{z(-\iota(\g^\vee))} s_{z(\iota(\g))} = t^{z u(\mu)} s_{z(\g)} s_{z(\iota(\g))} \\ &\geq z \tw r_\g z\i, \end{align*} where the first and second inequalities follow from the fact that $\mu$ is weakly dominant and that $\<\g^\vee, \iota(\g)\>=0$.

Thanks to Corollary \ref{f4}, it remains to show $-z(\underline \g)$ is $z \tw z\i$-permissible. Assume otherwise. By Lemma \ref{bound} (1), $\<\mu, \g\>=\<\mu, w(\g)\>=0$. If $\<\mu, \g_{J'}\>=0$, then $w(\g) \le_{J'} w\i w(\g)=\g$ (see Lemma \ref{shrink}) and $z w(\underline \g) \le z(\underline \g) < 0$, which contradicts Lemma \ref{bound} (2). So $\<\mu, \g_{J'}\>=-1$. We claim that

(i) $\g \in \ca_\th$ and hence $\g_{J'}=\th$.

Assume otherwise. If $\ca_{\a+\iota(\a)} \neq \emptyset$, then $\g \in \ca_{\a+\iota(\a)}=\ca_\th$, a contradiction. So $\ca_{\a+\iota(\a)} = \emptyset$ and $\g \in \ca_\a$. By Lemma \ref{bound} (3), $z(\underline \g + w(\underline \g)) \in \Phi^-$, which implies either $z(\g+w \iota(\g))$ or $z(\g+w(\g))$ lies in ${\Phi'}^-$. In the former case we have $\g+w \iota(\g) \in \ca_{\a+\iota(\a)}$, a contradiction. So $z(\g+w(\g)) \in {\Phi'}^-$. Since the coefficient of $\a$ in $\g+w(\g) \in \Phi'$ is 2, we see that $\SS_0'$ is of type $E_6$, $\a \in \{\a_3, \a_5\}$ and $\d_{k_0}=\a_4$. If $\d_{k_0} \notin \supp(\g+w(\g))$, then $\supp(\g+w(\g)) \subseteq \SS_0'-\{\d_{k_0}\}$ is of type $A$, which is impossible. So $\d_{k_0} \in \supp(\g+w(\a))$ and $\g+w(\g) \ge_{J'} 2\a+\d_{k_0}$. Then we have $$2z(\a+\d_{k_0}+\iota(\a))= z((2\a + \d_{k_0})+ (2 \iota(\a) + \d_{k_0})) < 0$$ and hence $\a+\d_{k_0}+\iota(\a) \in \ca_{\a+\iota(\a)}$, a contradiction. Therefore, (i) is proved.

Again, since either $z(\g+w \iota(\g))$ or $z(\g+w(\g))$ lies in ${\Phi'}^-$, by (i) we have $\ca_{2\g} \neq \emptyset$, which means $\SS_0'$ is of type $E_6$, $\a \in \{\a_3, \a_5\}$. Since $\<\mu, \g_{J'}\>=-1$, $\a \notin J'$ and $\nu_{\tw}$ is dominant, we have $J'=\SS_0'-\{\a_3, \a_5\}$ and $\mu=\o_1^\vee-\o_3^\vee+\o_4^\vee-\o_5^\vee+\o_6^\vee$. Then $\vartheta_{2\g}=\a_1+\a_2+\a_6+2(\a_3+\a_4+\a_5)$ and $\<\mu, \vartheta_{2 \g}\> = 0$. By Lemma \ref{zeta} and Lemma \ref{bound}, $-z(\underline \g)$ is $z \tw z\i$-permissible, a contradiction.

\

Suppose Lemma \ref{o1} (2) occurs. If $\ca_{2\a} \neq \emptyset$ and $\<\mu, \vartheta_{2\a}\>=-1$, let $\g \in \ca_{2\a}$ be as in Lemma \ref{o3} (1). Otherwise, let $\g \in \ca_\a$ be as in Lemma \ref{o3} (1). We show $z \overset {-z(\underline\g)} \longleftrightarrow z r_\g$. First note that $\mu+\g_{J'}^\vee \preceq \l$. So $z \tw r_\g z\i \in \Adm(\l)$ by Lemma \ref{o5}. It remains to show $-z(\underline \g)$ is $z \tw z\i$-permissible. Assume otherwise. As in the above case, we deduce that $\<\mu, \g_{J'}\>=-1$ and that either $z(\g+w(\g))$ or $z(\g+w\iota(\g))$ lies in ${\Phi'}^-$. If $\g \in \ca_{2\a}$, then the coefficient of $\a$ in $\g+w(\g)$ is 4, which is impossible. If $\g \in \ca_\a$, then $\ca_{2\a} \neq \emptyset$. So $\<\mu, \vartheta_{2\a}\> \ge 0$ by our construction of $\g$. Thanks to Lemma \ref{zeta} and Lemma \ref{bound}, $-z(\underline \g)$ is $z \tw z\i$-permissible, a contradiction.

\

Suppose Lemma \ref{o1} (3) occurs. Then $\mu+\a^\vee+\a_4^\vee \preceq \l$. By the choice of $\a$, we have $z(\a_4)>0$ and hence $z r_{\a_4} \in {W_0'}^{J'}$. If $z(\a+\a_4)>0$, then $z r_\a r_{\a_4}=z s_\a s_{\a_4} \in {W_0'}^{J'}$. Similarly as in \S\ref{simply2}, we have $$z \overset {z(\a_4)} \longleftrightarrow z r_{\a_4} \overset {-z (\a)} \longleftrightarrow z r_\a r_{\a_4} \overset {z s_\a(\a_4)} \longleftrightarrow z r_\a$$ as desired. If $z(\a+\a_4)<0$, let $\g \in \ca_{\a+\a_4}$ be as in Lemma \ref{o3}. We show $z \overset {-z(\g)} \longleftrightarrow z r_\g$. Indeed, thanks to Lemma \ref{f4} and Lemma \ref{o5}, it remains to show $-z(\underline\g)$ is $z \tw z\i$-permissible. Note that the coefficients of $\a$ and $\a_4$ in either $\g+w\iota(\g)$ or $\g+w(\g)$ are always 2 and 2. So $\g+w\iota(\g), \g+w(\g) \notin \Phi'$. Thus $\underline \g +w(\underline \g) \notin \Phi$ and the statement follows from Lemma \ref{bound} (3).

\

Suppose Lemma \ref{o1} (4) occurs. We can assume $\a=\a_{n-1}$. Set $\b=\a_{n-d}$ and $\eta_k=\a_{n-d-1+k}$ for $1 \le i \le d$. Set $\th=\eta_1 + \cdots + \eta_{d-1} + \a_{n-1} + \a_n$. Then $\mu + \th^\vee \preceq \l$. Let $0 \le i_0 \le d-1$ be the minimal integer such that $z(\a_n+\eta_d+\cdots+\eta_{i_0+1}) < 0$.

If $i_0=0$, that is, $\ca_{\a+\iota(\a)+\b} \neq \emptyset$, let $\g \in (\ca_{\a+\iota(\a)+\b})_{\max, J'}$. Then $\iota(\g)=\g$ (since $\g \ge \a_{n-1} + \a_n$) and $z \geq z s_\g=z r_\g \in (W_0')^{J'}$ (see Lemma \ref{f3}). We show that $z \overset {-z(\underline\g)} \longleftrightarrow z s_\g$. Indeed, by Corollary \ref{f4}, it remains to show $-z(\g)$ is $z \tw z\i$-permissible. Assume otherwise. Then $\<\mu, w(\g)\>=0$ (see Lemma \ref{bound} (1)). However, $\g_{J'}=\a_n + \cdots + \a_{n-d}$ and $\<\mu, \g_{J'}\> \ge 0$. Thus $\<\mu, \g_{J'}\>=0$, $w(\g) \le_{J'} w\i w(\g)=\g$ (see Lemma \ref{shrink}) and $z w(\g) \le z(\g) < 0$, contradicting Lemma \ref{bound} (2).

Assume $i_0 \ge 1$. If $i_0\le d-2$, choose $\g \in (\ca_{\a+\iota(\a)})_{\max, J'}$ such that $\g \ge_{J'} \a_n+\eta_d+\cdots+\a_{i_0+1}$. Otherwise, let $\g=\a=\a_{n-1}$. By Lemma \ref{f3}, we have $z r_\g \in W_0^J$. Moreover, $z r_\g (\eta_i)=z(\eta_i) > 0$ if $1 \le i \le i_0-1$, and $z r_\g (\eta_{i_0})=z(\g+\eta_{i_0}) \ge z(\a_n+\eta_d + \cdots + \eta_{i_0}) > 0$ (by the choice of $i_0$). In a word, $z, z r_\g \in (W_0')^{H'}$ with $H'= J' \cup \{\eta_1, \dots, \eta_{i_0}\}$. We show $z \leftrightarrow z r_\g$. Set $z_i=z s_{\eta_i} \cdots s_{\eta_1}$ and $z_i'=z r_{\g} s_{\eta_i} \cdots s_{\eta_1}$ for $0 \le i \le i_0$. Then $z_i, z_i' \in {W_0'}^{J'}$ (see Lemma \ref{mini}). Moreover, for $1 \le i \le i_0$, $(z_{i-1}')\i(z r_\g(\eta_i)) = z_{i-1}\i(z(\eta_i))=\eta_1 + \cdots + \eta_i$ is conjugate to $\eta_1=\a_{n-d}$ under $W_J$. Since $\mu+\a_{n-d}^\vee=s_{\a_{n-d}}(\mu) \preceq \l$, it follows from Corollary \ref{f4} that $s_{z(\eta_i)} z_{i-1} \tw z_{i-1}\i, s_{z r_\g(\eta_i)} z_{i-1}' \tw z_{i-1}^{\prime -1} \in \Adm(\l)$. On the other hand, we have $z_{i-1}\i(z(\eta_i))+wz_{i-1}\i(z(\eta_i)) \in \ZZ\Phi_{H'}'$, in which the coefficient of $\eta_1$ is two. So $z_{i-1}\i(z(\eta_i))+wz_{i-1}\i(z(\eta_i)) \notin \Phi'$ (since $\a_{n-1}, \a_n \notin H'$), that is, $z(\underline {\eta_i}) + z_{i-1} w z_{i-1}\i(z (\underline {\eta_i})) \notin \Phi$ (note that $\underline {\eta_i} =\eta_i$). By Lemma \ref{bound}, $z(\eta_i)$ and $z r_\g(\eta_i)$ are $z_{i-1} \tw z_{i-1}\i$-permissible and $z_{i-1}' \tw z_{i-1}^{\prime -1}$-permissible respectively. Thus $z=z_0 \overset {z(\eta_1)} \longleftrightarrow \cdots \overset {z(\eta_{i_0})} \longleftrightarrow z_{i_0}$ and $z_{i_0}' \overset {zr_\g(\eta_{i_0})} \longleftrightarrow \cdots \overset {zs_\g(\eta_1)} \longleftrightarrow z_0'=z r_\g$. Finally we show $z_{i_0} \overset {-z(\underline\g)} \longleftrightarrow z_{i_0}'$. Note that $z_{i_0}\i z(\g)=\g+\eta_{i_0} + \cdots + \eta_1$. As in \S\ref{simply2}, we have $\<\mu, w z_{i_0}\i z(\g)\> \ge 1$ and hence $-z(\underline\g)$ is $z_{i_0} \tw z_{i_0}\i$-permissible. It remains to show $z_{i_0} \tw z_{i_0}\i r_{z(\g)} \in \Adm(\l)$. If $i_0 \le d-2$, then $\g=\iota(\g)$, and $z_{i_0}\i z(\g)$ is conjugate to $\th$ under $W_J$. Thus the statement follows from Corollary \ref{f4} (since $\mu+\th^\vee \preceq \l$). If $i_0=d-1$, then $\g=\a$. We set $\th''=\eta_1+\cdots+\eta_{d-1}$. Then $\<\mu, \th''\>=0$, $z_{i_0}(\th''), z_{i_0}(\th''+\a)<0$ and $z_{i_0}(\th''+\a+\iota(\a))>0$, one checks that \begin{align*} t^{z_{i_0}(\mu+({\th''}^\vee+\a^\vee)+({\th''}^\vee+\iota(\a^\vee))-{\th''}^\vee)} & \geq t^{z_{i_0}(\mu-{\th''}^\vee)} s_{z_{i_0}(\th''+\a)} s_{z_{i_0}(\th''+\iota(\a))} \\ & \geq t^{z_{i_0}(\mu)} s_{z_{i_0}(\th'')} s_{z_{i_0}(\th''+\a)} s_{z_{i_0}(\th''+\iota(\a))} \\ &\geq t^{z_{i_0}(\mu)} s_{z_{i_0}(\th''+\a)} s_{z_{i_0}(\th''+\iota(\a))} = t^{z_{i_0}(\mu)} s_{z(\a)} s_{z \iota(\a)} \\ &\geq z_{i_0} \tw z_{i_0}\i r_{z(\g)}. \end{align*} The proof is finished.

\section{Type $G_2$ root system} \label{G2}
In this section, we show that Proposition \ref{hyp'} holds if $\SS_0$ is of type $G_2$. Let $\a$ and $\b$ be the unique simple long root and simple short root respectively. Let $\l, \mu, J, \tw, w$ be as in Section \ref{proof-main}. Fix $1 \neq z \in W_0^J$.

\subsection{} Assume $J=\{s_\a\}$. Then $\tw=t^\mu s_\a$, $-z(\b), z(\a) > 0$, $\<\mu, \a\>=1$, $\<\mu, \b\> \ge 0$ and $\mu+\a^\vee+\b^\vee \preceq \l$. Set $\g_1=\b$, $\g_2=\a+3\b$, $\g_3=\a+2\b$, $\g_4=2\a+3\b$ and $\g_5=\a+\b$. For $1 \le j \le i \le 5$, we have $z(\g_i) > 0$ if $z(\g_j)>0$. We denote by $(C_i)$ the case that $z(\g_i)<0$ and $z(\g_{i+1}) > 0$. Assume $(C_i)$ holds, then $z \geq z s_{\g_i} \in W_0^J$. By Lemma \ref{bound} and Corollary \ref{f4}, to show $z \overset {-z(\g_i)} \longleftrightarrow z s_{\g_i}$, it suffices to prove $z \tw s_{\g_i} z\i \in \Adm(\l)$.

Assume $(C_1)$ holds. Then $$t^{z(\mu+\a^\vee+\b^\vee)} \geq t^{z(\mu+\a^\vee+\b^\vee)} t^{z(-\b^\vee)} s_{z(\b)} =t^{z(\mu+ \a^\vee)} s_{z(\b)} \geq z \tw s_{\g_1} z\i. $$

Assume $(C_2)$ holds. Then $$t^{z(\mu+\a^\vee+\b^\vee)} \geq t^{z(\mu)} s_{z(\a+3\b)} \geq z \tw s_{\g_2} z\i.$$

Assume $(C_3)$ holds. Then \begin{align*} t^{z(\mu+\a^\vee+\b^\vee)} &\geq t^{z(\mu)} s_{\a+3\b} \geq t^{z(\mu)} s_{z(\a+3\b)} s_{z(2\a+3\b)} \\ &\geq t^{z(\mu)} s_{z(\a+3\b)} s_{z(2\a+3\b)} s_{z(\a+\b)}=t^{z(\mu)} s_{z(\a+2\b)} \geq z \tw s_{\g_3} z\i. \end{align*}

Assume $(C_4)$ holds. Then \begin{align*} t^{z(\mu+\a^\vee+\b^\vee)} \geq t^{z(\mu)} s_{z(\a+3\b)} \geq t^{z(\mu)} s_{z(\a+3\b)} s_{z(\a)}= t^{z(\mu)} s_{z(\a)} s_{z(2\a+3\b)} \geq z \tw s_{\g_4} z\i.  \end{align*}

Assume $(C_5)$ holds. Then \begin{align*} t^{z(\mu+\a^\vee+\b^\vee)} &\geq t^{z(\mu)} s_{z(\a+3\b)} \geq t^{z(\mu)} s_{z(\a+3\b)} s_{z(\a+2\b)} \\ &\geq t^{z(\mu)} s_{z(\a+3\b)} s_{z(\a+2\b)} s_{z(\a)} =t^{z(\mu)} s_{z(\a+\b)} \geq z \tw s_{\g_5} z\i.  \end{align*}

\subsection{} Assume $J=\{s_\b\}$. Then $\tw=t^\mu s_\b$, $z(\b), -z(\a) > 0$, $\<\mu, \b\>=1$, $\<\mu, \a+\b\> \ge 0$ and $\mu+\a^\vee \preceq \l$. Set $\d_1=\a$, $\d_2=\a+\b$, $\d_3=2\a+3\b$, $\d_4=\a+2\b$ and $\d_5=\a+3\b$. For $1 \le j \le i \le 5$, we have $z(\d_i) > 0$ if $z(\d_j)>0$. We denote by $(C_i')$ the case that $z(\d_i)<0$ and $z(\d_{i+1}) > 0$. Assume $(C_i')$ holds, then $z \geq z s_{\d_i} \in W_0^J$. By Lemma \ref{bound} and Corollary \ref{f4}, to show $z \overset {-z(\d_i)}\longleftrightarrow z s_{\d_i}$, it suffices to show $z \tw s_{\d_i} z\i \in \Adm(\l)$.

Assume $(C_1')$ holds. Then $t^{z(\mu+\a^\vee)} \geq t^{z(\mu)} s_{z(\a)} \geq z \tw s_{\d_1} z\i$.

Assume $(C_2')$ holds. Then \begin{align*} t^{z(\mu+\a^\vee)} \geq t^{z(\mu)} s_{z(\a)} \geq t^{z(\mu)} s_{z(\a)} s_{z(2\a+3\b)} =t^{z(\mu)} s_{z(\b)} s_{z(\a+\b)} \geq z \tw s_{\d_2} z\i. \end{align*}

Assume $(C_3')$ holds. Then \begin{align*} t^{z(\mu+\a^\vee)} \geq t^{z(\mu)} s_{z(\a)} \geq t^{z(\mu)} s_{z(\a)} s_{z(\a+2\b)} =t^{z(\mu)} s_{z(\a)} s_{z(2\a+3\b)} \geq z \tw s_{\d_3} z\i. \end{align*}

Assume $(C_4')$ holds. Then \begin{align*} t^{z(\mu+\a^\vee)} \geq t^{z(\mu)} s_{z(\a)} \geq t^{z(\mu)} s_{z(\a)} s_{z(\a+3\b)} = t^{z(\mu)} s_{z(\b)} s_{z(\a+2\b)} \geq z \tw s_{\d_4} z\i.  \end{align*}

Assume $(C_5')$ holds. Then \begin{align*} t^{z(\mu+\a^\vee)} \geq t^{z(\mu)} s_{z(\a)} \geq  t^{z(\mu)} s_{z(\a)} s_{z(\b)} =t^{z(\mu)} s_{z(\b)} s_{\a+3\b} \geq z \tw s_{\d_5} z\i. \end{align*}

\subsection{} Assume $J=\emptyset$. Then $\tw=t^\mu$, $\mu$ is dominant and $\mu+\a^\vee, \mu+\a^\vee+\b^\vee \preceq \l$. If $z(\a)<0$, then $$t^{z(\mu)} s_{z(\a)}=t^{z(\mu+\a^\vee)} t^{-z(\a^\vee)} s_{z(\a)} \leq t^{z(\mu+\a^\vee)} \in \Adm(\l)$$ and hence $z \overset {-z(\a)} \longleftrightarrow z s_{\a}$. Otherwise, we have $z(\b)<0$. If $z(\a+3\b)<0$, then $$t^{z(\mu)} s_{z(\a+3\b)}=t^{z(\mu+\a^\vee+\b^\vee)} t^{-z((\a+3\b)^\vee)} s_{z(\a+3\b)} \leq t^{z(\mu+\a^\vee+\b^\vee)} \in \Adm(\l)$$ and hence $z \overset {-z(\a+3\b)} \longleftrightarrow z s_{\a+3\b}$. If $z(\a+3\b)>0$, then $$t^{z(\mu)} s_{z(\b)} \leq t^{z(\mu)} s_{z(\b)} t^{z(\a^\vee)} s_{z(\a)} \leq t^{z(\mu+\a^\vee+\b^\vee)} s_{z(\a+3\b)} \leq t^{z(\mu+\a^\vee+\b^\vee)} \in \Adm(\l),$$ and $z \overset {-z(\b)} \longleftrightarrow z s_\b$.

\appendix

\section{}
In the appendix, we prove Lemma \ref{seq}, Lemma \ref{o1}, Lemma \ref{empty} and Lemma \ref{zeta} via a case-by-case analysis on the type of the Dynkin diagram of $\SS_0$ (or $\SS_0'$ in Section \ref{non-simply-laced}).

For $\chi \in Y$ we define $D_\chi^+ =\{\a \in \SS_0; \<\chi, \a\> \ge 1\}$, $D_\chi^0 =\{\a \in \SS_0; \<\chi, \a\>=0\}$ and $D_\chi^- =\{\a \in \SS_0; \<\chi, \a\>=-1\}$. For $\a, \a' \in \SS_0$ (or $\SS_0'$) we denote by $[\a, \a']$ be the subset of simple roots that lie on the geodesic (shortest path) in the Dynkin diagram connecting $\a'$ and $\a$. Set $(\a, \a')=[\a, \a']-\{\a, \a'\}$. Let's consider the following condition:

(c) $\SS_0=D_\chi^+ \cup D_\chi^0 \cup D_\chi^-$, and $(\a, \a') \cap D_\chi^+ \neq \emptyset$ for any $\a \neq \a' \in D_\chi^-$.

It is clear that (c) holds if $\chi$ is weakly dominant.

Again, we adopt the labeling of Dynkin diagrams by positive integers as in \cite{Hum}. For $i \in \ZZ_{\ge 1}$ we denote by $\a_i$ the corresponding simple root. We write $\a_k \trianglelefteq \a_{k'}$ if $k \le k'$.
\begin{lem} \label{criterion}
If (c) holds for $\chi \in Y$, then $\chi$ is weakly dominant if one of the following conditions holds:

(1) $\SS_0$ is of type $A$;

(2) $\SS_0$ is of type $D_n$, $\max D_\chi^+ \ge \max (D_\chi^- - \{\a_{n-1}, \a_n\})$ and $\{\a_{n-1}, \a_n\} \not \subseteq D_\chi^-$.
\end{lem}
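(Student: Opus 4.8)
I would verify directly that $\<\chi,\g\>\ge -1$ for every $\g\in\Phi^+$, which is the definition of weak dominance (\S\ref{chase}). Write $x_k=\<\chi,\a_k\>$. Condition (c) says each $x_k\ge -1$, that two simple roots of $D_\chi^-$ are never adjacent, and that any two elements of $D_\chi^-$ are separated by an element of $D_\chi^+$ on the connecting geodesic. First I would record the estimate that proves (1) and is reused for (2): on a type‑$A_m$ diagram satisfying (c), for an interval root $\g=\a_a+\cdots+\a_b$ the elements of $D_\chi^-$ inside $[\a_a,\a_b]$ are pairwise non‑adjacent and each consecutive pair is separated by an element of $D_\chi^+$ strictly between them, so $|D_\chi^+\cap[\a_a,\a_b]|\ge|D_\chi^-\cap[\a_a,\a_b]|-1$; since $x_k\ge 1$ on $D_\chi^+$, $x_k=-1$ on $D_\chi^-$ and $x_k=0$ otherwise, summing gives $\<\chi,\g\>\ge -1$. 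I would also extract the equality case: if $\<\chi,\g\>=-1$, then $D_\chi^+\cap[\a_a,\a_b]$ consists of exactly one vertex between each consecutive pair of $D_\chi^-$‑vertices, each with $x_k=1$; in particular every index of $[\a_a,\a_b]$ lying beyond the last $D_\chi^-$‑index (in either direction) is in $D_\chi^0$. This settles (1).

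For (2), write $\Phi$ in the usual coordinates ($\a_i=e_i-e_{i+1}$ for $i<n$, $\a_n=e_{n-1}+e_n$). Deleting the leaf $\a_n$ (resp. $\a_{n-1}$) from the Dynkin diagram of $D_n$ leaves a type‑$A_{n-1}$ diagram, and (c) passes to the two rank‑$(n-1)$ sub‑root‑systems $\langle\a_1,\dots,\a_{n-1}\rangle$ and $\langle\a_1,\dots,\a_{n-2},\a_n\rangle$, since a leaf never lies on a geodesic between the remaining vertices. Every $\g\in\Phi^+$ has $\a_{n-1}$‑coefficient $0$, or $\a_n$‑coefficient $0$, or both coefficients positive. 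In the first two cases $\g$ is a positive root of one of the two $A_{n-1}$ sub‑systems and (1) gives $\<\chi,\g\>\ge -1$. In the last case $\g=e_a+e_b$ with $1\le a<b\le n-1$; if $b=n-1$ then $\g=(e_a-e_n)+\a_n=(e_a+e_n)+\a_{n-1}$, and since $\{\a_{n-1},\a_n\}\not\subseteq D_\chi^-$ one of $x_{n-1},x_n$ is $\ge 0$; pairing it with the matching summand, which is a root of value $\ge -1$ by the previous cases, yields $\<\chi,\g\>\ge -1$.

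The remaining case $\g=e_a+e_b$ with $a<b\le n-2$ is the heart of the matter, since the obvious bounds only give $\<\chi,\g\>\ge -2$. Here I would use the decomposition $\g=(e_a-e_n)+(e_b+e_n)$, a sum of two positive roots, the first with vanishing $\a_n$‑coefficient and the second with vanishing $\a_{n-1}$‑coefficient, each of value $\ge -1$ by the cases above; so it suffices to show they cannot both equal $-1$. Assume both do. Applying the equality case of the first paragraph to the path $\a_a,\dots,\a_{n-2},\a_{n-1}$ supporting $e_a-e_n$, there is a largest index $q$ with $\a_q\in D_\chi^-$ on it, and $\a_{q+1},\dots,\a_{n-1}\in D_\chi^0$. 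If $q\le n-2$, then $\a_q\in D_\chi^--\{\a_{n-1},\a_n\}$, so the hypothesis $\max D_\chi^+\ge\max(D_\chi^--\{\a_{n-1},\a_n\})$ supplies an element of $D_\chi^+$ of index $\ge q$, hence $>q$; it cannot be one of $\a_{q+1},\dots,\a_{n-1}$, so it is $\a_n$, giving $x_n\ge 1$ and $\<\chi,e_b+e_n\>=\<\chi,\a_b+\cdots+\a_{n-2}\>+x_n\ge 0$, contradicting $\<\chi,e_b+e_n\>=-1$. If $q=n-1$, then $\a_{n-1}\in D_\chi^-$, hence $\a_n\notin D_\chi^-$; applying the equality case to the path $\a_b,\dots,\a_{n-2},\a_n$ supporting $e_b+e_n$ gives a largest index $q'$ with $\a_{q'}\in D_\chi^-$ on it, and $q'\le n-2$ because $\a_n\notin D_\chi^-$, with $\a_{q'+1},\dots,\a_{n-2},\a_n\in D_\chi^0$; the same hypothesis supplies an element of $D_\chi^+$ of index $>q'$ that cannot lie in $\{\a_{q'+1},\dots,\a_{n-2},\a_n\}$, forcing it to be $\a_{n-1}$ — impossible since $\a_{n-1}\in D_\chi^-$. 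So the two summands are not both $-1$, whence $\<\chi,\g\>\ge -1$, completing the proof of (2).

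The main obstacle is precisely this last family $e_a+e_b$ with $a<b\le n-2$: decomposing into path‑roots loses two units instead of one, and recovering the missing unit forces one to use the sharp (equality) form of the type‑$A$ estimate together with \emph{both} hypotheses of (2), namely $\max D_\chi^+\ge\max(D_\chi^--\{\a_{n-1},\a_n\})$ and $\{\a_{n-1},\a_n\}\not\subseteq D_\chi^-$. Every other positive root is disposed of by (1) alone, or by (1) plus the non‑containment hypothesis when two leaves occur.
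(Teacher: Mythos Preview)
The paper states this lemma without proof, so there is nothing to compare against; your argument stands on its own and is correct. The type-$A$ counting estimate and its equality analysis are exactly what is needed, and your reduction of the $D_n$ case to the two $A_{n-1}$ sub-root-systems obtained by deleting a leaf is clean. The delicate point---the roots $e_a+e_b$ with $a<b\le n-2$---is handled correctly: your decomposition $e_a+e_b=(e_a-e_n)+(e_b+e_n)$ together with the sharp equality case of the type-$A$ bound forces, under either sub-case $q\le n-2$ or $q=n-1$, a contradiction with the two hypotheses in (2). One small remark: in invoking the hypothesis $\max D_\chi^+\ge\max(D_\chi^--\{\a_{n-1},\a_n\})$ you implicitly use that $D_\chi^--\{\a_{n-1},\a_n\}\neq\emptyset$; this is automatic in both sub-cases since you have already located an $\a_q$ (or $\a_{q'}$) with index $\le n-2$ in $D_\chi^-$.
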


\begin{lem} \label{plus}
Let $\tw, J, \mu$ be as in Section \ref{proof-main}. Let $\a, \a' \in \SS_0 - J$ with $\<\mu, \a'\> = -1$. Then we have

(1) $\sum_H \<\mu|_H, \a'\> < -1$, where $H$ ranges over connected components of $J$, and $\mu|_H \in \RR \Phi_H^\vee$ denotes the restriction of $\mu$ to $\RR \Phi_H$;

(2) $(\a, \a') \cap D_\mu^+ \neq \emptyset$ if either (i) $\SS_0$ is of type $D$ and $\a, \a' \in \SS_0 - \{\a_{n-1}, \a_n\}$ or (ii) $\SS_0$ is of type $A$.
\end{lem}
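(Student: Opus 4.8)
The plan is to prove the two parts separately: part (1) admits a short conceptual proof, while part (2) reduces, after two elementary reductions along a geodesic, to a finite inspection in types $A$ and $D$.

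For (1), the point is the identity $\sum_H \mu|_H = \mu - \nu_{\tw}$, where $H$ ranges over the connected components of $J$. First I would observe that $\a' \notin J_{\nu_{\tw}}$: if $\a'$ lay in a connected component $H_0$ of $J_{\nu_{\tw}}$, then either $\mu$ is noncentral on $\Phi_{H_0}$, so $H_0 \subseteq J$ and $\a' \in J$, contradicting $\a' \in \SS_0 - J$; or $\mu$ is central on $\Phi_{H_0}$, so $\<\mu, \a'\> = 0 \neq -1$. Next, $\nu_{\tw}$ is fixed by $W_{J_{\nu_{\tw}}} \supseteq W_J$, hence $\<\nu_{\tw}, \g\> = 0$ for all $\g \in \Phi_J$, i.e. $\nu_{\tw} \in (\RR\Phi_J^\vee)^\perp$; and $\mu - \nu_{\tw} \in \RR\Phi_J^\vee$ since $\tw \in \tW_J$. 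Therefore the orthogonal projection of $\mu$ onto $\RR\Phi_J^\vee = \bigoplus_H \RR\Phi_H^\vee$ equals $\mu - \nu_{\tw}$, that is, $\sum_H \mu|_H = \mu - \nu_{\tw}$. Pairing with $\a'$ gives $\sum_H \<\mu|_H, \a'\> = \<\mu, \a'\> - \<\nu_{\tw}, \a'\> = -1 - \<\nu_{\tw}, \a'\>$, and since $\nu_{\tw}$ is dominant and $\a' \notin J_{\nu_{\tw}}$ we have $\<\nu_{\tw}, \a'\> > 0$, whence $\sum_H \<\mu|_H, \a'\> < -1$. (This argument makes no use of the type of $\SS_0$.)

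For (2), fix the geodesic $\a = \eta_0, \eta_1, \dots, \eta_r = \a'$ in the Dynkin diagram (assuming $\a \neq \a'$; when $\a, \a'$ are adjacent $(\a, \a')$ is empty, a case that does not arise in the applications). Under hypothesis (i) or (ii) this path lies in a type-$A$ subdiagram, so each consecutive partial sum $\eta_i + \cdots + \eta_j$ is a positive root. Let $k^*$ be the least index with $\eta_{k^*} \in D_\mu^-$; it exists because $\eta_r = \a' \in D_\mu^-$. If $k^* < r$, apply weak dominance of $\mu$ to the root $\eta_{k^*} + \cdots + \eta_r$: using $\<\mu, \eta_{k^*}\> = \<\mu, \eta_r\> = -1$ one gets $\sum_{k^* < j < r} \<\mu, \eta_j\> \ge 1$, so some $\eta_j$ with $k^* < j < r$ lies in $D_\mu^+$, and $\eta_j \in (\a, \a')$. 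If $k^* = r$, then $\<\mu, \eta_j\> \ge 0$ for $1 \le j \le r-1$; if one of these is $\ge 1$ we are done, so assume $\<\mu, \eta_j\> = 0$ for all $1 \le j \le r-1$. Now split according to whether the neighbour $\eta_{r-1}$ of $\a'$ lies in $J$: if $\eta_{r-1} \in J$, its connected component of $J$ is contained in $(\a, \a')$ (since $\a, \a' \notin J$ and the path is type $A$), so $\mu$ is central on it, contradicting the definition of $J$; if $\eta_{r-1} \notin J$, then $\a'$ is adjacent to at most two connected components of $J$, and a direct check gives $\<\mu|_H, \a'\> \ge -1$ for each such component, forcing $\sum_H \<\mu|_H, \a'\> \ge -1$ and contradicting part (1).

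The step I expect to be most delicate is the final one: the bound $\<\mu|_H, \a'\> \ge -1$ for a connected component $H$ of $J$ adjacent to $\a'$. This rests on structural facts particular to types $A$ and $D$ — that $\mu|_H$ is a single minuscule fundamental coweight (as $\mu$ is $J$-minuscule and noncentral on $\Phi_H$), that the node of $H$ adjacent to $\a'$ is a leaf of $H$ except in the controlled situation $\SS_0$ of type $D_n$ with $\a' = \a_{n-3}$ and $H \subseteq \{\a_{n-2}, \a_{n-1}, \a_n\}$ (which is exactly where the hypothesis $\a' \notin \{\a_{n-1}, \a_n\}$ enters), and that the relevant entry of the inverse Cartan matrix of $H$ is then $\le 1$. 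So this really does demand a short case-by-case verification, which is why the lemma is relegated to the appendix; everything else is linear algebra together with the weak dominance of $\mu$.
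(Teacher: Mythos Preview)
Your argument for part (1) is correct and is exactly the paper's: both identify $\sum_H \mu|_H = \mu - \nu_{\tw}$ as the orthogonal projection of $\mu$ onto $\RR\Phi_J^\vee$ and pair with $\a'$, using $\a' \notin J_{\nu_{\tw}}$ to get $\<\nu_{\tw},\a'\> > 0$.

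For part (2) the paper takes a shorter, more uniform route. It argues in a single stroke: one checks that
\[
\sum_{H \not\subseteq (\a,\a')} \<\mu|_H, \a'\> \ge -1
\]
(this is precisely the inverse-Cartan estimate you isolate in your last paragraph, applied to the components of $J$ that touch $\a'$ from the side of the geodesic \emph{away} from $\a$); then (1) forces some component $H' \subseteq (\a,\a')$ with $\<\mu|_{H'},\a'\> < 0$, and since $\mu$ is noncentral on every component of $J$, that $H'$ already meets $D_\mu^+$. No appeal to weak dominance along the geodesic and no index $k^*$ is needed. Your $k^* < r$ branch (via weak dominance of $\mu$ on $\eta_{k^*}+\cdots+\eta_r$) and your $\eta_{r-1}\in J$ branch (via centrality of $\mu$ on the resulting component) are both valid, but they are subsumed by this single estimate; your remaining branch $\eta_{r-1}\notin J$ is essentially the paper's argument restricted to that special situation.

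One genuine slip to fix: the sentence ``at most two components, each with $\<\mu|_H,\a'\> \ge -1$, forcing $\sum_H \<\mu|_H,\a'\> \ge -1$'' does not follow as written --- two terms each $\ge -1$ only yield $\ge -2$. The only way two such components occur is when $\SS_0$ is of type $D_n$ with $\a' = \a_{n-2}$, and then each component is a singleton $\{\a_{n-1}\}$ or $\{\a_n\}$ contributing exactly $-\tfrac12$; you need this sharper observation (or, as the paper does, bound the external sum directly) to close the argument.
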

\begin{proof}
Notice that $\a' \in \SS_0 - J_{\nu_{\tw}}$ and $\nu_{\tw}=pr_J(\mu)=\mu-\sum_H \mu|_H$. Therefore, we have $$0 < \<pr_J(\mu), \a'\>=\<\mu, \a'\>-\sum_H \<\mu|_H, \a'\>=-1-\sum_H \<\mu|_H, \a'\>$$ as desired.

Under the condition of (2), one checks that $\sum_{H; H \not \subseteq (\a, \a')} \<\mu|_H, \a'\> \ge -1$. Thus, by (1) there exists a connected component $H' \subseteq (\a, \a')$ of $J$ such that $\<\mu|_{H'}, \a'\> < 0$. So $\emptyset \neq H' \cap D_\mu^+ \subseteq (\a, \a') \cap D_\mu^+$ as desired.
\end{proof}

\begin{lem} \label{plus-simple}
Assume $\SS_0$ is simply laced. Let $\chi \in Y$ and $\a \in \SS_0$ such that (c) holds for $\chi$, and $(\a, \a') \cap D_\chi^- \neq \emptyset$ for any $\a' \in D_\chi^-$. Then (c) holds for $\chi + \a^\vee$.
\end{lem}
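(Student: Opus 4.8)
The plan is to track how the pairings $\<\chi,\b\>$, $\b\in\SS_0$, change when we replace $\chi$ by $\chi':=\chi+\a^\vee$, and then to check the two clauses of condition (c) for $\chi'$. Since $\SS_0$ is simply laced, $\<\a^\vee,\b\>=2$ if $\b=\a$, $\<\a^\vee,\b\>=-1$ if $\b$ is adjacent to $\a$ in the Dynkin diagram, and $\<\a^\vee,\b\>=0$ otherwise. Thus going from $\chi$ to $\chi'$ raises the value at $\a$ by $2$, lowers it by $1$ at each neighbour of $\a$, and leaves it unchanged at every other simple root; in particular $\<\chi',\a\>=\<\chi,\a\>+2\ge 1$, using $\<\chi,\a\>\ge -1$ from (c) for $\chi$, so $\a\in D_{\chi'}^+$.

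The first — and essentially only — nontrivial step is to exploit the hypothesis to show that $D_\chi^-\subseteq\{\a\}$. If this fails, pick $\delta\in D_\chi^-\setminus\{\a\}$ with $\dist(\a,\delta)$ minimal, so $\dist(\a,\delta)\ge 1$. Applying the hypothesis to $\a'=\delta$ yields some $\delta''\in(\a,\delta)\cap D_\chi^-$; since $\delta''$ lies strictly between $\a$ and $\delta$ on the unique geodesic of the (tree) Dynkin diagram of $\SS_0$, we have $\delta''\ne\a$ and $\dist(\a,\delta'')<\dist(\a,\delta)$, contradicting the minimality of $\delta$. Hence $D_\chi^-\subseteq\{\a\}$, i.e. $\<\chi,\b\>\ge 0$ for every $\b\in\SS_0\setminus\{\a\}$. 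The first clause of (c) for $\chi'$ now follows at once: $\<\chi',\a\>\ge 1$; for a neighbour $\b$ of $\a$ we get $\<\chi',\b\>=\<\chi,\b\>-1\ge -1$ (as $\b\ne\a$); and for any remaining $\b$ we get $\<\chi',\b\>=\<\chi,\b\>\ge -1$ by (c) for $\chi$. Moreover $D_{\chi'}^-$ consists only of neighbours of $\a$: indeed $\a\notin D_{\chi'}^-$ since $\<\chi',\a\>\ge 1$, and if $\b$ is neither $\a$ nor a neighbour of $\a$ then $\<\chi',\b\>=\<\chi,\b\>\ge 0$, so $\b\notin D_{\chi'}^-$.

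Finally, for the separation clause, let $\b\ne\b'$ lie in $D_{\chi'}^-$. By the previous step both are neighbours of $\a$, so in the tree $\SS_0$ the geodesic joining $\b$ and $\b'$ is exactly $\b,\a,\b'$; hence $\a\in(\b,\b')$, and since $\a\in D_{\chi'}^+$ we conclude $(\b,\b')\cap D_{\chi'}^+\ni\a$, which is the required non-emptiness. This gives (c) for $\chi'$. The only genuine obstacle is the step extracting $D_\chi^-\subseteq\{\a\}$ from the hypothesis (after which $\chi$ is in fact dominant off $\a$); the remainder is just the elementary observation that two neighbours of a vertex in a tree are separated by that vertex, together with the bookkeeping above.
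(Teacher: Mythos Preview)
Your argument is correct for the lemma exactly as printed. The paper gives no proof of this lemma, so there is nothing to compare against directly; however, looking at every application of the lemma in the appendix (Case~(1), (2.1), (2.2.1), (2.2.3) of \S\ref{prf-seq} and Case~(1) of \S\ref{prf-o1}) shows that what is actually verified before invoking it is always $(\a,\a')\cap D_\chi^{+}\neq\emptyset$ for $\a'\in D_\chi^{-}$, via Lemma~\ref{plus}(2). The printed hypothesis with $D_\chi^{-}$ is almost certainly a typo for $D_\chi^{+}$.

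This distinction matters for your proof. Your key step---forcing $D_\chi^{-}\subseteq\{\a\}$ by a minimal-distance descent---relies on the hypothesis producing a \emph{new element of $D_\chi^{-}$} strictly between $\a$ and any given $\a'\in D_\chi^{-}$. Under the intended hypothesis the element produced lies in $D_\chi^{+}$, not $D_\chi^{-}$, so no descent is available, and indeed $D_\chi^{-}$ need not be small (e.g.\ in type $A$ take $\chi$ with values alternating between $1$ and $-1$ along the diagram and $\a$ an endpoint with $\langle\chi,\a\rangle\ge 0$). For the intended statement a different argument is needed: the hypothesis still forces $\a\notin D_\chi^{-}$ and forces every neighbour of $\a$ out of $D_\chi^{-}$ (since $(\a,\a')=\emptyset$ when $\a'$ is adjacent to $\a$), so $\a\in D_{\chi'}^{+}$ and the first clause of (c) for $\chi'=\chi+\a^\vee$ goes through as you wrote. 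For the separation clause one then shows, for $\b\neq\b'\in D_{\chi'}^{-}$, that either $\a\in(\b,\b')$ already, or one can find $\gamma\in(\b,\b')\cap D_\chi^{+}$ which is not the (unique) neighbour of $\a$ lying on $[\b,\b']$---using either (c) for $\chi$ or the hypothesis applied to whichever of $\b,\b'$ lies in $D_\chi^{-}$---so that $\gamma\in D_{\chi'}^{+}$ survives. Your bookkeeping in the last paragraph handles only the very special situation where both $\b,\b'$ are neighbours of $\a$.
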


\subsection{} \label{prf-seq} We prove Lemma \ref{seq}. First note that (c) holds for $\mu$ since $\mu$ is weakly dominant.

Case(1): $\SS_0$ is of type $A$. We show $\mu+\a^\vee$ is weakly dominant. By Lemma \ref{criterion}, it suffices to show (c) holds for $\mu+\a^\vee$. Since (c) holds for $\mu$, by Lemma \ref{plus-simple}, it remains to show $(\a, \a') \cap D_\mu^+ \neq \emptyset$ for $\a' \in D_\mu^-$, which follows from Lemma \ref{plus} (2).

Case(2): $\SS_0$ is of type $D_n$. Thanks to Lemma \ref{plus} (1), one checks that $\{\a_{n-1}, \a_n\} \not \subseteq D_\mu^-$ and the following two statements hold.

(a1) if one of $\a_{n-1}$ and $\a_n$, say $\a_n$, belongs to $D_\mu^-$, then there exists $k \le n-2$ such that $\<\mu, \a_k\> =1$ and $\a_{k-1}, \dots, \a_{n-1} \in J$;

(a2) if $\{\a_{n-1}, \a_n\} \cap D_\mu^- = \emptyset$, then $\max D_\mu^+ > \max D_\mu^-$.

Case(2.1): (a1) occurs. Then $\a=\a_i$ with $i \le k-2$. We show $\th+\a^\vee$ is weakly dominant. By Lemma \ref{criterion}, it suffices to show (c) holds for $\mu+\a^\vee$. Applying Lemma \ref{plus-simple}, it remains to show $(\a, \a') \cap D_\mu^+ \neq \emptyset$ for each $\a' \in D_\mu^-$. If $\a'=\a_n$, we have $\a_k \in (\a, \a') \cap D_\mu^+$ as desired. Otherwise, $\a'=\a_j$ with $j \le k-2$ and the statement follows from Lemma \ref{plus} (2).

Case(2.2): (a2) occurs. We can assume $\a=\a_i$ with $i \neq n-1$.

Case(2.2.1): $i < \max D_\mu^+$. We show $\mu+\a^\vee$ is weakly dominant. Again Lemma \ref{criterion} (2) holds for $\mu+\a^\vee$. It suffices to show (c) holds for $\mu+\a^\vee$. Since $\{\a_{n-1}, \a_n\} \cap D_\mu^- = \emptyset$, this follows from Lemma \ref{plus-simple}.

Case(2.2.2): $\max D_\mu^+ \le i \le n-2$. We show $\mu+\a^\vee$ is weakly dominant. Indeed, one checks that $\mu+\a^\vee \rightarrowtail_+ \mu' := \mu+\a^\vee + \a_{i+1}^\vee + \cdots + \a_{n-1}^\vee + \a_n^\vee$. By Lemma \ref{ind}, it suffices to show $\mu'$ is weakly dominant. By Lemma \ref{criterion}, it suffices to show (c) holds for $\mu'$, which follows directly by Lemma \ref{plus} (2) and the observation that $D_{\mu'}^- \subseteq \{\a_{i-1}, \a_{n-2}\} \cup D_\mu^-$ and $D_{\mu'} \supseteq (\{\a_n, \a_{n-1}\} \cup D_\mu^+) - \{\a_{i-1}, \a_{n-2}\}$.

Case(2.2.3): $i=n$. Suppose first that there exists $1 \le j \le n-1$ such that $\<\mu, \a_j\>=-1$ and $(\a_j, \a) \cap D_\mu^+ =\emptyset$. By Lemma \ref{plus} (1) and the fact $\a \notin J$, we have $j \le n-2$, $\<\mu, \a_{n-1}\>=1$ and $\a_{j+1}, \cdots, \a_{n-1} \in J$. Thus $\mu+\a^\vee \to_+ \mu':=\mu+\a^\vee+\a_{n-2}^\vee + \cdots + \a_j^\vee$. We show $\mu'$ is weakly dominant (and hence Lemma \ref{seq} (1) holds). Since $\a_n \in D_{\mu'}^+$, by Lemma \ref{criterion} it suffices to show (c) holds for $\mu'$, which can be proved similarly as in Case(2.2.2). Now suppose $(\a', \a) \cap D_\mu^+ \neq \emptyset$ for any $\a' \in D_\mu^-$. We show $\mu+\a^\vee$ is weakly dominant. To this end, we have $\<\mu+\a^\vee, \a_n\> \ge 2$ and (c) holds for $\mu+\a^\vee$ by Lemma \ref{plus} and Lemma \ref{plus-simple}. The statement now follows from Lemma \ref{criterion}.

Case(3): $\SS_0$ is of type $E_n$ with $n \in \{6, 7, 8\}$. Let's consider the forgetful map $f$ which associates to each quintuple $(J_{\nu_{\tw}}, \mu, J, \a, \b)$ in Lemma \ref{seq} the quadruple $(J_{\nu_{\tw}}, \mu|_{\nu_{\tw}}, \a, \b)$. It can be easily verified that the image of $f$ is a finite set. Let $c$ be the fiber of $f$ containing $(J_{\nu_{\tw}}, \mu, J, \a, \b)$. Then there exists a unique quintuple $(J_{\nu_{\tw}}, \mu_c, J, \a, \b)$ in $c$, such that for each $(J_{\nu_{\tw}}, \mu', J, \a, \b)$ in $c$, $\mu'-\mu_c$ is a nonnegative linear combination of fundamental coweights.

By a case by case analysis on finitely many $c$'s, we verified Lemma \ref{seq} holds for each $(J_{\nu_{\tw}}, \mu_c, J, \a, \b)$. Now we show it also holds for $(J_{\nu_{\tw}}, \mu, J, \a, \b)$ in $c$.

Case(3.1): $(J_{\nu_{\tw}}, \mu_c, J, \a, \b)$ falls in the case Lemma \ref{seq} (3). Then so does $(J_{\nu_{\tw}}, \mu, J, \a, \b)$. With $(J_{\nu_{\tw}}, J, \a, \b)$ as in Lemma \ref{seq} (3) and $\mu|_{\nu_{\tw}}=\mu_c|_{\nu_{\tw}}$, it can be easily checked that $\mu=\o_1^\vee - \o_5^\vee + \o_6^\vee + k \o_8^\vee$ for some $k \in \ZZ_{\ge 0}$. Moreover, with some simple calculations, we can deduce that $\mu+\a^\vee$ is not weakly dominant, and that $\sum_{k=2}^{m-1} \<\mu, \d_k\>=1$. Furthermore, since $\mu_c+\d_m^\vee + \cdots + \d_1^\vee + 2\e^\vee +\xi_1^\vee + \xi_2^\vee + \b^\vee$ is weakly dominant, it follows that $\mu+\d_m^\vee + \cdots + \d_1^\vee + 2\e^\vee +\xi_1^\vee + \xi_2^\vee + \b^\vee$ is weakly dominant, where $\e=\a_4$ and $\{\xi_1, \xi_2\}=\{\a_2, \a_3\}$. So $(J_{\nu_{\tw}}, \mu, J, \a, \b)$ falls in the case Lemma \ref{seq} (3).

Case(3.2): $(J_{\nu_{\tw}}, \mu_c, J, \a, \b)$ falls in the case Lemma \ref{seq} (2). Then so does $(J_{\nu_{\tw}}, \mu, J, \a, \b)$. The proof is similar as in Case(3.1).

Case(3.3): $(J_{\nu_{\tw}}, \mu_c, J, \a, \b)$ does not fall in the case Lemma \ref{seq} (2) or (3). If $\mu_c + \a^\vee$ is weakly dominant, then so is $\mu + \a^\vee$ and the proposition follows. Assume $\mu_c + \a^\vee$ is not weakly dominant. Then $\mu_c + \d_m^\vee + \cdots + \d_1^\vee$ is weakly dominant and hence so is $\mu + \d_m^\vee + \cdots + \d_1^\vee$. If $\sum_{i=2}^{m-1}\<\mu, \d\> = 0$, the proposition follows either $\mu + \a^\vee$ is or is not weakly dominant. Otherwise, by $\sum_{i=2}^{m-1}\<\mu, \d\> \ge 1$ and $\<\mu, \d_i\> \ge 0$ for $2 \le i \le m-1$, we have $\mu + \d_m^\vee + \cdots + \d_1^\vee \to_- \mu+\a^\vee$. By Lemma \ref{ind}(4), $\mu+\a^\vee$ is weakly dominant and the proposition follows.

\subsection{} \label{prf-o1} We prove Lemma \ref{o1}. Again notice that $\mu$ is weakly dominant for $\Phi$ and hence for $\Phi'$. Moreover, $\mu + \underline \a^\vee \le \l$ by Lemma \ref{add-simple}.

Case(1): $\SS_0'$ is of type $A_n$ with $n$ odd. Then $\iota(\a_k)=\a_{n+1-k}$ for $1 \le k \le n$. If $\a=\a_{(n+1)/2}$, then $\iota(\a)=\a$ and $\mu+ {\underline \a}^\vee = \mu+\a^\vee \le \l$.  Similarly as in Case(1) of \S\ref{prf-seq}, we have $\mu+\a^\vee \preceq \l$ as desired.

Assume $\a=\a_i$ with $1 \le i \le (n-1)/2$. If $(\a, \iota(\a)) \cap D_\mu^+ = \emptyset$, we show $(\a, \iota(\a)) \cap D_\mu^- =\emptyset$ and hence $(\a, \iota(\a)) \subseteq D_\mu^0$. Indeed, if $(\a, \iota(\a)) \cap D_\mu^- \neq \emptyset$, by Lemma \ref{plus} (2) we have $(\a, \a') \cap D_\mu^+ \neq \emptyset$ for each $\a' \in (\a, \iota(\a)) \cap D_\mu^-$, a contradiction. Thus we have $\mu + \underline\a^\vee = \mu + \a_i^\vee + \a_{n+1-i}^\vee \to_+ \mu':=\mu+ \a_i^\vee + \cdots + \a_{n+1-i}^\vee$. We show $\mu'$ is weakly dominant and hence $\mu' \preceq \l$ (see Lemma \ref{ind}). By Lemma \ref{criterion}, it suffices to show (c) holds for $\mu'$. Notice that $D_{\mu'}^- \subseteq D_\mu^- \cup \{\a_{i-1}, \a_{n+2-i}\}$ and $D_{\mu'}^+ \supseteq (D_\mu^+ \cup \{\a_i, \a_{n+1-i}\})-\{\a_{i-1}, \a_{n+2-i}\}$. The statement now follows from Lemma \ref{plus} (2). If $(\a, \iota(\a)) \cap D_\mu^+ \neq \emptyset$, we show $\mu + {\underline\a}^\vee=\mu+\a^\vee+\iota(\a^\vee)$ is weakly dominant. Again it suffices to verify (c) holds for $\mu+{\underline \a}^\vee$. By Lemma \ref{plus-simple}, it remains to show $(\a, \a') \cap D_\mu^+ \neq \emptyset$ for $ \a' \in D_\mu^- \cup \{\iota(\a)\}$. Indeed, if $\a'=\iota(\a)$, the statement follows from our assumption. Otherwise, the statement follows from Lemma \ref{plus} (2). Moreover, by Lemma \ref{plus} (2), we have $|(\a, \iota(\a)) \cap D_\mu^-| < |(\a, \iota(\a)) \cap D_\mu^+|$ and hence $\sum_{k=i+1}^{n-i} \<\mu, \a_k\> \ge 1$ as desired (see Lemma \ref{o1} (1)).

Case(2): $\SS_0'$ is of type $D_n$. Then $\iota$ fixes $\a_k$ for $1 \le k \le n-2$ and exchanges $\a_{n-1}$ and $\a_n$. Since $J'=\iota(J')$, by Lemma \ref{plus} (1) we have $\a_{n-1}, \a_n \in \SS_0' - D_\mu^-$ and $\max D_\mu^+ \ge \max D_\mu^-$. If $\a=\a_i$ with $1 \le i \le n-2$, then we have $\mu+{\underline \a}^\vee = \mu+\a^\vee \preceq \l$ as in Case(2) of \S\ref{prf-seq}. Now we assume $\a=\a_n$. Since $\a_{n-1}, \a_n \in \SS_0' - J'$, $\<\mu, \a_{n-2}\> \ge 0$ by Lemma \ref{plus} (1).

Case(2.1): there exits $\a_j \in D_\mu^-$ such that $(\a_{j+1}, \a_{n-2}) \cap D_\mu^+ = \emptyset$. By Lemma \ref{plus} (1), we have $[\a_{j+1}, \a_{n-3}] \subseteq D_\mu^0 \cap J$, $\a_{n-2} \in J$ and $\<\mu, \a_{n-2}\>=1$. Thus $\mu + \a^\vee + \iota(\a^\vee) \to_+ \mu':= \mu+ \a^\vee + \iota(\a^\vee) + \a_{n-2}^\vee + \cdots + \a_j^\vee$. We show $\mu' \preceq \l$. By Lemma \ref{ind}, it suffices to show
$\mu'$ is weakly dominant. Since $\a_{n-1}, \a_n \in D_{\mu'}^+$, by Lemma \ref{criterion} it remains to verify (c) for $\mu'$. Notice that $D_{\mu'}^- \subseteq (D_\mu^- \cup \{\a_{j-1}\}) - \{\a_j\}$ and $D_{\mu'}^+ \supseteq (D_\mu^+ \cup \{\a_{n-1}, \a_n\}) -\{\a_{n-2}, \a_{j-1}\}$. The statement follows from \ref{plus} (2).

Case(2.2): $(\a', \a_{n-2}) \cap D_\mu^+ \neq \emptyset$ for any $\a' \in D_\mu^-$. We set $\mu'=\mu+\a^\vee+\iota(\a^\vee)$ if $\<\mu, \a_{n-2}\> \ge 1$ and $\mu'=\mu+\a^\vee+\iota(\a^\vee)+\a_{n-2}^\vee$ otherwise. We need to show $\mu'$ is weakly dominant. Since $\a_{n-1}, \a_n \in D_{\mu'}^+$, by Lemma \ref{criterion}, it suffices to verify (c) holds for $\mu'$, which follows similarly as in Case(2.1).

Case(3): $\SS_0'$ is of type $E_6$. Assume $\a=\iota(\a)=\a_2$. If $\mu$ is dominant or $\<\mu, \a\>=-1$, then $\mu+\a^\vee$ is weakly dominant and hence $\mu+\a^\vee \preceq \l$. Otherwise, by Lemma \ref{plus} and the observation $\mu=\iota(\mu)$, we have $-\<\mu, \a_4\> = \<\mu, \a_3\> =\<\mu, \a_5\>=1$ and $J'=\SS_0' - \{\a_2, \a_4\}$. Then $\mu+\a^\vee \to_+ \mu'=\mu+\a^\vee+\a_4^\vee$ and $\mu'$ is dominant. Therefore, $\mu' \preceq \l$ as desired. Assume $\a=\iota(\a)=\a_4$. Then either $\<\mu, \a\>=1$ or $\mu$ is dominant by Lemma \ref{plus}, which means $\mu+\a^\vee$ is weakly dominant as desired. Assume $\a=\a_3$. By Lemma \ref{plus}, either $\mu$ is dominant or $-1=\<\mu, \a\>=-1=\<\mu, \a_1\>=\<\mu, \a_4\>=1$. Then one checks Lemma \ref{o1} (4) holds. Assume $\a=\a_1$. By Lemma \ref{plus}, $\<\mu, \a_i\> \ge 0$ for $i \in \{1, 2, 3\}$. Moreover, $\<\mu, \a_3\>=\<\mu, \a_2\>=1$ if $\<\mu, \a_4\>=-1$. Then one checks Lemma \ref{o1} (4) holds. Therefore, the proof is finished.

\subsection{} \label{prf-empty} We show Lemma \ref{empty}. We assume the five conditions hold for each element of $D_{\max, J}$, and show this will lead to a contradiction. First we make some reductions.

Choose $\eta \in D$. Let $D''=\{\g \in D; \g-\eta \in \ZZ \Phi_J\}$ and let $\Phi''$ be the root system spanned by $\Phi_J$ and $\eta$. Thanks to \cite{CKV}[Proposition 4.2.11], the set of simple roots of $\Phi''$ is $J \cup \{\eta_J\}$. By definition we have that $D''_{\max, J} \subseteq D_{\max, J}$ and that $\tw$ is short for $\Phi''$. Thus by replacing the pair $(D, \Phi)$ with $(D'', \Phi'')$, we can assume that

(i) $J=\SS_0-\{s_\b\}$ and $D \subseteq \{\g \in \Phi; \g-\b \in \Phi_J\}$ for some $\b \in \SS_0$. Moreover, $\mu$, which is $J$-dominant and $J$-minuscule, is noncentral on each connected component of $J$.

By (3') and (1') (of Lemma \ref{empty}) we also have

(ii) $\<\mu, \b\>=-1$, $\vartheta_{2\b} \in \Phi$ and $\<\mu, \vartheta_{2\b}\> = 0$, where $\vartheta_{2\b}$ denotes the unique $J$-antidominant and $J$-minuscule character in $2\b+\ZZ \Phi_J$.

Let $\g \in D$. Then $\g-\b \in \ZZ \Phi_J$ and we can write $\g=\b+\sum_H {}^H\g$, where $H$ ranges over the connected components of $J$ and ${}^H\g \in \ZZ_{\ge 0} \Phi_H$. By (1') and (ii), there exits at most one connected component $H'$ such that $\<\mu, {}^{H'} \g\>=1$. If this happens, we say $\g$ is of type $H'$.

Assume $\SS_0$ is of type $A$. Then $\a+w(\a) \notin \Phi$ for any $\a \in D_{\max, J}$ since the coefficient $\b$ in $\a+w(\a)$ is two, contradicting (3').

Assume $\SS_0$ is of type $D_n$. There are three cases to consider:

Case(1): $\b \in \{\a_1, \a_{n-1}, \a_n\}$. This is impossible as in the Type $A$ case.

Case(2): $\b=\a_{n-2}$. Then $\mu=\o_k^\vee-\o_{n-2}^\vee+\o_{n-1}^\vee+\o_n^\vee$ (since $\mu$ is noncentral on each connected component of $J=\SS_0 - \{\b\}$). Let $H_1$, $H_{n-1}$ and $H_n$ be the three connected components of $J$ containing $\a_1$, $\a_{n-1}$ and $\a_n$ respectively. If there exists $\a \in D_{\max, J}$ which is of type $H_1$, then ${}^{H_{n-1}} \a={}^{H_n}\a=0$, which implies $w(\a) \ge_J \b+\a_{n-1}+\a_n$ and hence $\<\mu, w(\a)\> \ge 1$, a contradiction to (1'). Thus each root in $D$ is not of type $H_1$. Choose $\a \in D_{\max, J}$ such that ${}^{H_1}\a \not \le_J {}^{H_1}\phi$ for any $\phi \in D$. By symmetry we can assume $\a$ is of type $H_{n-1}$, that is, ${}^{H_{n-1}}\a=\a_{n-1}$. Then ${}^{H_n}((w\i(\a))=\a_n$, ${}^{H_{n-1}}((w\i(\a))=0$ and ${}^{H_1}\a <_J {}^{H_1}((w\i(\a)) \in \Phi_{H_1}^+$. By (4') we have $w\i(\a)-\a_n \in D$. However, ${}^{H_1}(w\i(\a)-\a_n) = {}^{H_1}(w\i(\a)) > {}^{H_1}\a$, contradicting the choice of $\a$.

Case(3): Suppose $\b=\a_i$ with $2 \le i \le n-3$. Let $H_1$ and $H_n$ be the two connected components of $J$ containing $\a_1$ and $\a_n$ respectively. Then $\mu=\o_k^\vee-\o_i^\vee+\o_j^\vee$ for some $1 \le k \le i-1$ and $j \in \{i+1, n-1, n\}$.

Case(3.1): $j=i+1$. If there exists $\a \in D_{\max, J}$ which is of type $H_1$, then ${}^{H_n}\a=0$. Thus $w(\a) \ge_J \b+2\a_{i+1}$ and hence $\<\mu, w(\a)\> \ge 1$, a contradiction. Therefore, each root of $D$ is not of type $H_1$. Choose $\a \in D_{\max, J}$ such that ${}^{H_1}\a \not \le_J {}^{H_1}\phi$ for any $\phi \in D$. By (3'), we have $\a+w\i(\a) \in \Phi$, which implies ${}^{H_n}\a=\a_{i+1} + \cdots + \a_{n-2} + \a_{n-1}$ and ${}^{H_n}(w\i(\a))=\a_{i+1} + \cdots + \a_{n-2} + \a_n$ (up to exchanging $\a_{n-1}$ and $\a_n$). Moreover, ${}^{H_1}(w\i(\a)) > {}^{H_1}\a$. By (4'), $w\i(\a)-\a_n \in D$ with ${}^{H_1}(w\i(\a)-\a_n) = {}^{H_1}(w\i(\a)) > {}^{H_1}\a$, contradicting the choice of $\a$.

Case(3.2): $j=n$. Since $\<pr_J(\mu), \b\> > 0$, we have $i \ge 3$. Moreover, since $\<\mu, \vartheta_{2\b}\>=0$ (see (ii)), we have $k=i-1$. Set $\d_1=\a_{i+1}+ \cdots + \a_{(n-i-1)/2} + 2\a_{(n-i+1)/2} + \cdots + 2 \a_{n-2} + \a_{n-1} + \a_n$ and $\d_2=\a_{i+1}+ \cdots + \a_{(n-i-1)/2}$. For each $\g \in D_{\max, J}$ we have $\g + w\i(\g) \in \Phi$ by (3'), which implies $n-i$ is odd and

(d1) ${}^{H_n}\g=\d_1$ and ${}^{H_n}(w\i(\g))=\d_2$ if $\g$ is of type $H_n$.

(d2) ${}^{H_n}\g=\d_2$ and ${}^{H_n}(w\i(\g))=\d_1$ if $\g$ is of type $H_1$.

Let $\a \in D_{\max, J}$. Assume $\a$ is of type $H_n$. Then ${}^{H_1}\a=0$. By (d1) and (4') we have $w\i(\a)-(\a_{i+1} + \cdots + \a_{(n-i-1)/2}) = \a_1 + \cdots + \a_i \in D$. Let $\a' \in D_{\max, J}$ such that $\a' \ge \a_1 + \cdots + \a_i$. Then ${}^{H_1}(\a')=\a_1 + \cdots + \a_{i-1}$. By (d2) and (4'), we have $w\i(\a')-(\a_{(n-i+1)/2} + \cdots + \a_{n-1})=\a_2 + \cdots + \a_{n-2} + \a_n \in D$. However, $\<\mu, \a_2 + \cdots + \a_{n-2} + \a_n\> =1$, contradicting (1'). Therefore, $\a$ is of type $H_1$ and hence each root of $D$ is not of type $H_n$. By (d2) and (4') we deduce that $\a_n \le_J w\i(\a)-(\a_{(n-i+1)/2} + \cdots + \a_{n-1}) \in D$, which is of type $H_n$, a contradiction.

Assume $\SS_0$ is of type $E_n$ with $n \in \{6, 7, 8\}$. Then there are only finitely many pairs $(\mu, \b)$ satisfying (i) and (ii). The statement is checked case-by-case by a similar strategy in Case (3) \footnote{By a simple reduction, we only need to address the case where each connected component of $J$ if of type $A$ or type $D$.}. Therefore, the proof is finished.

\subsection{} \label{prf-zeta} Finally we show Lemma \ref{zeta}. Let $\SS_0''$ be the connected component of the Dynkin diagram $J' \cup \{\z_{J'}\}$ containing $\z_{J'}$. Set $J''=\SS_0'' - \{\z_{J'}\} \subseteq J'$. By definition, $\z_{J'}=\z_{J''}$. We claim that

(i) The restriction of $\iota$ to $\SS_0''$ is nontrivial unless $\SS_0''$ is of type $A$.

Assume $\iota$ acts trivially on $\SS_0''$. Then $\iota$ fixes each element of $J''$. If $\SS_0'$ is of type $A$ (resp. $E_6$), then $|J''| \le 1$ (resp. $|J''| \le 2$) and $|\SS_0''|\le 2$ (resp. $|\SS_0''|\le 3$), which means $\SS_0''$ is of type $A$. If $\SS_0'$ is of type $D_n$, then $J'' \subseteq \SS_0' -\{\a_{n-1}, \a_n\}$. Moreover, $\supp(\z_{J'}) \subseteq \SS_0' -\{\a_1, \a_{n-1}, \a_n\}$ because $\ca_{2 \z} \neq \emptyset$. Since $\SS_0' -\{\a_{n-1}, \a_n\}$ is of type $A$, one checks that $\SS_0''=J'' \cup \{\z_{J'}\}$ is also of type $A$. So (i) is proved.

Now we assume (1) and (2) of Lemma \ref{empty} hold, and show this will lead to a contradiction.

Case(1): $\SS_0''$ is of type $A$. Then $\ca_{2\z}=\emptyset$, a contradiction.

Case(2): $\SS_0''$ is of type $D_n$. By (i) $\iota$ fixes $\a_k$ for $1 \le k \le n-2$, and exchanges $\a_{n-1}$ and $\a_n$. Since $\ca_{2\z} \neq \emptyset$, $\z_{J'}=\a_i$ with $2 \le i \le n-2$.

Case(2.1): $\z_{J'}=\a_{n-2}$. Then $\mu |_{\SS_0''}=\o_k^\vee + \o_{n-1}^\vee + \o_n^\vee$ for some $1 \le k \le n-3$. If $\z \ge_{J''} \a_k$, then $\supp(\z) \subseteq \SS_0'' - \{\a_{n-1}, \a_n\}$ (since $\<\mu, \z\>=0$). Thus $w(\z) \ge_{J''} \a+\a_{n-1}+\a_n$ and hence $\<\mu, w(\underline \z)\> \ge 1$, contradicting (1) of Lemma \ref{empty}. So exactly one of $\a_{n-1}$ and $\a_n$ lies in $\supp(\z)$. In particular, $\z \neq \iota(\z)$ and hence $\underline \z$ is a short root of the root system $\Phi''$ of $\SS_0''$. However, $\z+w(\z)$ is $\iota$-fixed (since the coefficient of $\b$ is two) and hence $\underline \z + w(\underline \z)$ is a long root of $\Phi''$. This implies $\<\underline \z^\vee, w(\underline \z)\>=0$, contradicting (2) of Lemma \ref{empty}.

Case(2.2): $\z_{J'}=\a_i$ with $2 \le i \le n-3$. Then $\mu |_{\SS_0''}=\o_k^\vee - \o_i^\vee + \o_{i+1}^\vee$ for some $1 \le k \le i-1$. If $\z \ge_{J''} \b+\a_k$, then $\a_{i+1} \notin \supp(\z)$. Thus $w(\z) \ge_{J''} \b+2\a_{i+1}$ and hence $\<\mu, w(\underline \z)\> \ge 1$, a contradiction. So the coefficient of $\a_{i+1}$ in $\z$ is one. By (2), we have either $\z+w(\z) \in \Phi''$ or $\z+\iota w(\z) \in \Phi''$. As in Case(3.1) of \S \ref{prf-empty}, this implies $\z \neq \iota(\z)$, which is impossible as in Case(2.1).

Case(3): $\SS_0''$ is of type $E_6$. By (i) $\iota$ restricts to a nontrivial involution of $\SS_0''$. Assume $\z_{J'} = \a_2$. By Lemma \ref{plus} and the observation $\mu=\iota(\mu)$, we have $\mu|_{\SS_0''}=-\o_2^\vee + \o_4^\vee$. Thus the coefficient of $\a_4$ in $\z$ is one since $\<\mu, \z\>=0$, which implies $\<\mu, w(\z)\>=1$, contradicting (1). Assume $\z_{J'} = \a_4$. By Lemma \ref{plus} and the assumption $\<\mu, \vartheta_{2 \z}\> \ge 0$, we have $\mu |_{\SS_0''}=\o_2^\vee -\o_4^\vee +\o_3^\vee + \o_5^\vee$. Then one checks that either $\<\mu, \z\> \ge 1$ or $\<\mu, w(\z)\> \ge 1$, which contradicts (1). The proof is finished.

\end{document}